\theoremstyle{plain}
\newtheorem*{thm*}{Theorem}
\newtheorem{theorem}{Theorem}[section]
\Crefname{theorem}{Theorem}{Theorems}
\newtheorem*{lem*}{Lemma}
\newtheorem{lemma}[theorem]{Lemma}
\Crefname{lemma}{Lemma}{Lemmas}
\newtheorem*{claim*}{Claim}
\crefname{claim}{Claim}{Claims}
\Crefname{claim}{Claim}{Claims}
\newtheorem{prop}[theorem]{Proposition}
\Crefname{prop}{Proposition}{Propositions}
\newtheorem{corollary}[theorem]{Corollary}
\crefname{corollary}{Corollary}{Corollaries}
\newtheorem{conj}[theorem]{Conjecture}
\crefname{conj}{Conjecture}{Conjectures}
\newtheorem*{conj*}{Conjecture}
\Crefname{qn}{Question}{Questions}
\Crefname{obs}{Observation}{Observations}
\Crefname{ex}{Example}{Examples}
\theoremstyle{definition}
\Crefname{prob}{Problem}{Problems}
\newtheorem{defn}[theorem]{Definition}
\Crefname{defn}{Definition}{Definitions}
\newtheorem*{defn*}{Definition}
\theoremstyle{remark}
\renewenvironment{proof}[1][]{\begin{trivlist}
\item[\hspace{\labelsep}{\bf\noindent Proof#1.\/}] }{\qed\end{trivlist}}
\newcommand{\eps}{\varepsilon}
\renewcommand{\P}{\mathbb{P}}
\newcommand{\G}{\mathcal{G}}
\DeclareMathOperator{\bin}{Bin}
\def\expandafter\normalsize\expandafter{%
    \normalsize
    \setlength\abovedisplayskip{8pt}
    \setlength\belowdisplayskip{8pt}
    \setlength\abovedisplayshortskip{4pt}
    \setlength\belowdisplayshortskip{4pt}
}
 \setlist[itemize]{leftmargin=*}
\newcommand{\optionaldesc}[2]{%
  \phantomsection
  #1\protected@edef\@currentlabel{#1}\label{#2}%
}
\title{Regular subgraphs at every density}
\author{Debsoumya Chakraborti\thanks{
Mathematics Institute, University of Warwick, Coventry, United Kingdom. Research supported by the European Research Council (ERC) under the European Union Horizon 2020 research and innovation programme (grant agreement No.\ 947978). 
E-mail: {\tt
\{debsoumya.chakraborti, richard.montgomery\}@warwick.ac.uk}.} 
\and Oliver Janzer\thanks{Institute of Mathematics, EPFL, Lausanne, Switzerland. Email: {\tt oliver.janzer@epfl.ch}}
\and Abhishek Methuku\thanks{Department of Mathematics, University of Illinois at Urbana–Champaign, Urbana, IL, USA. Research supported by the UIUC Campus Research Board Award RB25050.
Email: {\tt
abhishekmethuku@gmail.com}} 
\and Richard Montgomery\footnotemark[1]
}
\date{}
\begin{document}

\maketitle

\begin{abstract}
In 1975, Erd\H os and Sauer asked to estimate, for any constant $r$, the maximum number of edges an $n$-vertex graph can have without containing an $r$-regular subgraph. In a recent breakthrough, Janzer and Sudakov proved that any $n$-vertex graph with no $r$-regular subgraph has at most $C_r n \log \log n$ edges, matching an earlier lower bound by Pyber, R\"odl and Szemer\'edi and thereby resolving the Erd\H os--Sauer problem up to a constant depending on~$r$. We prove that every $n$-vertex graph without an $r$-regular subgraph has at most $Cr^2 n \log \log n$ edges. This bound is tight up to the value of $C$ for $n\geq n_0(r)$ and hence resolves the Erd\H os--Sauer problem up to an absolute constant.

Moreover, we obtain similarly tight results for the whole range of possible values of $r$ (i.e., not just when $r$ is a constant), apart from a small error term at a transition point near $r\approx \log n$, where, perhaps surprisingly, the answer changes. More specifically, we show that every $n$-vertex graph with average degree at least $\min(Cr\log(n/r),Cr^2 \log\log n)$ contains an $r$-regular subgraph. The bound $Cr\log(n/r)$ is tight for $r\geq \log n$, while the bound $Cr^2 \log \log n$ is tight for $r<(\log n)^{1-\Omega(1)}$. These results resolve a problem of R\"odl and Wysocka from 1997 for almost all values of $r$.

To find regular subgraphs efficiently, we replace and improve substantial parts of the Janzer--Sudakov framework for the Erd\H{o}s--Sauer problem. Our strategies involve the algebraic techniques of Alon, Friedland and Kalai, the recent breakthroughs on the sunflower conjecture, techniques to find almost-regular subgraphs developed from Pyber's work, and, crucially, a novel random process that efficiently finds a very nearly regular subgraph in any almost-regular graph.
A key step in our proof uses this novel random process to show that every $K$-almost-regular graph with average degree~$d$ contains an $r$-regular subgraph for some $r=\Omega_K(d)$, which is of independent interest.
\end{abstract}

\section{Introduction}

Finding regular subgraphs with given degree is a fundamental problem in Graph Theory.
Many important theorems in Graph Theory assume that the graph is regular, or are much easier to prove in that case. Hence, developing tools for finding regular subgraphs is of great importance, and such tools can be applied in many different areas of Combinatorics (see, e.g., \cite{janzer2023resolution} for some of these applications).
 
In 1975, Erd\H os and Sauer \cite{Erd75} asked what is the maximum possible average degree that an $n$-vertex graph can have without containing an $r$-regular subgraph. This question received a lot of interest in the case where $r$ is a constant, and was reiterated in many places such as Bollob\'as's book on Extremal Graph Theory \cite{Bol78} and the book of Bondy and Murty \cite{BM76}, while it was also one of Erd\H os's favourite open problems~\cite{Erd81}.

In the early 80s, a very influential algebraic technique for finding regular subgraphs with given degree was developed by Alon, Friedland and Kalai \cite{AFK84,AFK84short}. Informally speaking, they proved that a graph which is very close to being regular (in the sense that its average degree is very close to its maximum degree) contains a regular subgraph of large degree (see Theorem \ref{thm:AFK} and Corollary \ref{cor:AFK} below for the formal statements). This result, together with new ideas, was used by Pyber \cite{pyber1985regular} in 1985 to obtain the first strong bounds for the minimum average degree condition which implies the existence of an $r$-regular graph.

\begin{theorem}[Pyber \cite{pyber1985regular}] \label{thm:pyber}
    For any positive integers $r$ and $n$, every $n$-vertex graph with average degree greater than $32r^2\log n$ contains an $r$-regular subgraph.
\end{theorem}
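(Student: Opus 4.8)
The plan is to combine the algebraic machinery of Alon, Friedland and Kalai with a regularisation argument, using bipartiteness and König's edge-colouring theorem to turn a regular subgraph of unspecified degree into one of the exact degree $r$. First I would make routine reductions. We may assume $r\ge 2$, since for $r=1$ an average degree exceeding $32\log n\ge 1$ already forces an edge. Passing to a bipartite subgraph containing at least half of the edges and then repeatedly deleting vertices of low degree, we obtain a non-empty bipartite subgraph $B_0\subseteq G$ with $\delta(B_0)\ge 8r^2\log n$ and $v(B_0)\le n$. Fix, via Bertrand's postulate, a prime $p$ with $r\le p\le 2r$. The reason to work with bipartite graphs is the following consequence of König's edge-colouring theorem: a $p$-regular bipartite graph decomposes into $p$ perfect matchings, so the union of any $r$ of them is an $r$-regular spanning subgraph. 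Hence it suffices to find a $p$-regular subgraph of $B_0$, and for that, by the Alon--Friedland--Kalai corollary (valid since $p$ is prime), it suffices to find a subgraph $H\subseteq B_0$ with $\Delta(H)\le 2p-1$ and average degree over its non-isolated vertices greater than $2p-2$.

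The heart of the argument is a regularisation step: I would find a subgraph $B_1\subseteq B_0$ with $\delta(B_1)\ge D$ for some $D=\Omega(r^2)$ which is \emph{tightly} almost regular in the sense that $\Delta(B_1)\le(1+\tfrac1{3p})\,\delta(B_1)$. Given such a $B_1$, the subgraph $H$ is produced as follows: by König's theorem properly edge-colour $B_1$ with $\Delta(B_1)$ colours, and let $H$ be the union of the $2p-1$ largest colour classes (there are at least $2p-1$ of them since $\Delta(B_1)\ge D\gg p$; the finitely many small values of $r$ can be handled by hand). Then $\Delta(H)\le 2p-1$, and since these classes carry at least a $\tfrac{2p-1}{\Delta(B_1)}$ fraction of the edges of $B_1$, the average degree of $H$ is at least
\[
\frac{2p-1}{\Delta(B_1)}\cdot\bar d(B_1)\ \ge\ \frac{2p-1}{\Delta(B_1)}\cdot\delta(B_1)\ \ge\ (2p-1)\Bigl(1-\tfrac1{3p}\Bigr)\ >\ 2p-2,
\]
so the Alon--Friedland--Kalai corollary applies to $H$ and yields a $p$-regular subgraph; this subgraph is bipartite, and König's theorem then finishes the proof as above.

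The main obstacle is the regularisation step, and in particular the insistence that $B_1$ be \emph{tightly} almost regular — with $\bar d(B_1)$ within a $(1+O(1/p))$ factor of $\Delta(B_1)$ — rather than merely having bounded degree ratio. A $2$-almost-regular subgraph (degrees between $D$ and $2D$) does not suffice: then $\bar d(B_1)$ could be as small as $\tfrac12\Delta(B_1)$, and the colour-class selection above would only guarantee average degree about $p$ in $H$, far below the threshold $2p-2$ required by Alon--Friedland--Kalai. To obtain $B_1$ I would first pass to a degree band: partitioning the edges of $B_0$ by the dyadic scale of the degree of their endpoint in a fixed part yields, at the cost of a single $\log n$ factor, a subgraph that is $2$-almost regular with all degrees $\Omega(r^2)$. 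The delicate part is then to sharpen this to near-exact regularity — sparsifying the subgraph and rounding its degrees to a common value, which in the bipartite setting one would phrase as an integral flow / degree-constrained subgraph problem — and controlling this rounding so that a near-$(2p-1)$-regular subgraph is guaranteed to survive is the crux of the whole argument.
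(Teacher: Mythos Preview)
This theorem is quoted from Pyber's 1985 paper and is not proved in the present paper; there is no proof here to compare against directly. That said, Pyber's technique is displayed in Section~\ref{sec:almost-regular} (the nested-matching argument used to prove Proposition~\ref{prop:almostreg sharp}), so one can see how your proposal diverges from it.

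Your proposal has a genuine gap, which you in effect acknowledge yourself: the ``tight regularisation'' step producing $B_1$ with $\Delta(B_1)\le(1+\tfrac1{3p})\,\delta(B_1)$ is the entire substance of the argument, and you give no mechanism for it. Dyadic banding yields only a factor-$2$ degree ratio, and the appeal to ``integral flow / degree-constrained subgraph problems'' is a hope, not a proof. Note also that once such a $B_1$ is in hand the edge-colouring detour is unnecessary: since $\tfrac{3p}{3p+1}>\tfrac{2p-2}{2p-1}$, a graph with $\bar d\ge\delta\ge\Delta/(1+\tfrac1{3p})$ already satisfies the hypothesis of Theorem~\ref{thm:AFK} with $q=p$ (the $+1$ in $\Delta+1$ is negligible since $\Delta=\Omega(r^2)$), so AFK applied to $B_1$ itself gives a $p$-regular subgraph.

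Pyber's actual argument sidesteps a separate regularisation step altogether. From a bipartite graph with $\delta\ge 8r^2\log n$ one extracts, via repeated applications of Hall's theorem, nested perfect matchings $M_1,\dots,M_d$ on sets $A_1\supset A_2\supset\cdots$ with $|A_i|=|B_i|$ (exactly as in the proof of Proposition~\ref{prop:almostreg sharp}). Fix a prime power $q$ with $r\le q\le 2r$ and $q\equiv r\pmod 2$, and set $s=2q-2$. Because $|A_1|\le n$ and there are more than $(2q-1)\log n$ disjoint blocks of $s$ consecutive indices, pigeonhole gives some $j$ with $|A_{j+s}|/|A_j|> 1-\tfrac1{2q-1}$. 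The graph $H=M_j\cup\cdots\cup M_{j+s}$ on $A_j\cup B_j$ then has $\Delta(H)=2q-1$ and average degree at least $(2q-1)\,|A_{j+s}|/|A_j|>2q-2$, so Theorem~\ref{thm:AFK} yields an $r$-regular subgraph directly. The point is that the nested-matching construction \emph{is} the near-regularisation: the union of $s+1$ nested perfect matchings automatically has $\Delta=s+1$, and the average-degree control comes from the pigeonhole on $|A_j|$, not from any rounding or flow argument.
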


When $r$ is constant Theorem~\ref{thm:pyber} is tight up to the factor of $\log n$, but as $r$ increases the bound is increasingly far from optimal (as discussed later). 
Chv\'atal had conjectured that some average degree condition depending only on $r$ should be sufficient to imply the existence of an $r$-regular graph (see~\cite{BM76}), but interestingly this turns out to be false. Indeed, in 1995, Pyber, R\"odl and Szemer\'edi \cite{PRSz95} constructed examples demonstrating that an $n$-vertex graph can have $\Omega(n\log\log n)$ edges, yet no $r$-regular subgraph, as follows.

\begin{theorem}[Pyber--R\"odl--Szemer\'edi \cite{PRSz95}]\label{thm:PRSorigconstruction}
There is some absolute constant $c>0$ such that, for each $n$, there is an $n$-vertex graph with average degree at least $c\log\log n$ which does not contain an $r$-regular subgraph for any $r\geq 3$.
\end{theorem}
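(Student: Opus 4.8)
The plan is to give an explicit construction by iteration. First I would reduce to the following cleaner statement: for every positive integer $k$ there is a graph $G_k$ on $n_k$ vertices with average degree at least $k$, with no $r$-regular subgraph for any $r\ge 3$, and with $\log\log n_k\le Ck$ for an absolute constant $C$. Given such a family, an arbitrary (large) $n$ is handled by choosing the largest index $k$ with $n_k\le n/2$, taking $\lfloor n/n_k\rfloor$ vertex-disjoint copies of $G_k$, and adding $n-\lfloor n/n_k\rfloor n_k<n_k$ isolated vertices: a disjoint union together with isolated vertices contains no new regular subgraph of positive degree, its average degree is still at least $k/2$ (the copies cover at least half the vertices), and the fact that $n_{k+1}>n/2$ forces $k=\Omega(\log\log n)$, which is exactly what we want.

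The graphs $G_k$ would be built recursively. Given $H:=G_k$ with $m$ vertices and average degree $d\ge k$, I would form $G_{k+1}$ by taking a suitable number $N$ of vertex-disjoint copies $H^{(1)},\dots,H^{(N)}$ of $H$ and adding a set $W$ of new ``linking'' vertices, each joined to a carefully chosen set of vertices spread across the copies. Two requirements must be met simultaneously: (i) the average degree of $G_{k+1}$ should exceed $d$ by a positive constant, which forces the linking vertices to have degree of order $d$ (low-degree linking vertices only dilute the average); and (ii) $G_{k+1}$ should have no $r$-regular subgraph for any $r\ge 3$. Taking $N$ polynomial in $m$ keeps $n_{k+1}=m^{O(1)}$, so that $\log\log n_{k+1}=\log\log n_k+O(1)$; iterating from $G_0=K_2$ then gives $\log\log n_k=O(k)$, and this polynomial-per-step blow-up is precisely the source of the double logarithm.

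The heart of the matter — and the step I expect to be the main obstacle — is property (ii). Suppose $H'\subseteq G_{k+1}$ is $r$-regular with $r\ge 3$. If $H'$ avoids every linking vertex, then since distinct copies are joined only through $W$, each nonempty piece $H'\cap H^{(j)}$ inherits all its $H'$-neighbourhoods and is therefore itself $r$-regular, contradicting the inductive hypothesis on $H$. The difficult case is when $H'$ uses a linking vertex $w$; one must then exploit the precise linking pattern to force $H'$ to contain an exactly $r'$-regular subgraph of a single copy with $r'\ge 3$. With a naive pattern — a single vertex joined to the ``$i$-th'' vertex of every copy — one only gets that each $H'\cap H^{(j)}$ is \emph{almost} regular, with all degrees in $\{r-1,r\}$; this already yields a contradiction for large $r$ via the Alon--Friedland--Kalai bound (\Cref{cor:AFK}: an almost-regular graph of large average degree contains a regular subgraph of degree at least $3$), but it fails for bounded $r$ (for instance $K_5$ minus an edge has all degrees in $\{3,4\}$ yet no $3$-regular subgraph). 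Overcoming this is the crux: one needs a more intricate linking gadget — for example linking the copies through further copies of $G_k$, or through a suitably chosen bipartite incidence structure between them — so that for every $r\ge 3$ the use of a linking vertex propagates to a genuinely regular subgraph of one copy; it may also help to strengthen the inductive hypothesis so that $G_k$ forbids not only regular subgraphs but also subgraphs whose degree sequence lies in a short interval.

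Finally I would assemble the pieces: start from $G_0=K_2$, which has average degree $1$ and trivially no $r$-regular subgraph for $r\ge 3$; apply the recursive step $k$ times to obtain $G_k$ with average degree at least $k$, no $r$-regular subgraph for any $r\ge 3$, and $\log\log n_k\le Ck$; and then invoke the reduction of the first paragraph to conclude that for every $n$ there is an $n$-vertex graph with average degree $\Omega(\log\log n)$ and no $r$-regular subgraph for any $r\ge 3$.
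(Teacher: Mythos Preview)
The paper does not prove this theorem; it is quoted as a result of Pyber, R\"odl and Szemer\'edi~\cite{PRSz95}. What the paper does prove is the stronger \Cref{prop:constructiondsmall}, whose case $r=3$ recovers the present statement. That proof (Section~\ref{sec:lower bounds}, via \Cref{prop:constructiondsmall adjusted}) is a one-shot random bipartite construction rather than anything recursive: one takes a large side $A$ of size $n$ and a tower of sets $B_{i_{\min}},\dots,B_{i_{\max}}$ whose sizes shrink doubly-exponentially, joins each vertex of $A$ to $\lfloor r/8\rfloor$ random vertices in every $B_i$, and shows by a union bound that no $m$-vertex set meeting only the ``large'' levels can carry $mr/8$ edges, while the ``small'' levels are simply too small to absorb the remaining degree. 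The doubly-exponential spacing is what produces the $\log\log n$.

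Your recursive scheme is a genuinely different idea, and your bookkeeping (polynomial blow-up per step gives $\log\log n_k=O(k)$; disjoint copies plus isolated vertices handle general $n$) is fine. The gap is exactly where you flag it: once an $r$-regular subgraph $H'$ uses a linking vertex, your argument only yields that each nonempty piece $H'\cap H^{(j)}$ has all degrees in $\{r-1,r\}$, and you correctly note that this does not force a $3$-regular subgraph for small $r$. You propose to fix this with ``a more intricate linking gadget'' or a strengthened induction hypothesis, but you do not actually supply either, so the crux of the argument is missing. Without that, the proposal is a plan, not a proof; and it is not obvious that any simple gadget will work, since the difficulty is precisely that forbidding regular subgraphs is not closed under the kind of vertex identifications your linking step performs. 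The paper's construction avoids this issue entirely by never needing an inductive hypothesis: the density argument is global and handles all $m$ at once.
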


In light of Theorem~\ref{thm:pyber}, in 1997, R\"odl and Wysocka \cite{RW97} posed the problem of estimating the smallest $d=d(r,n)$ such that every $n$-vertex graph with average degree at least $d$ contains an $r$-regular subgraph. They proved, for every $\gamma\geq 0$, that every $n$-vertex graph with average degree at least $\gamma n$ contains an $r$-regular subgraph for some $r\geq C\gamma^3 n$, where $C$ is some absolute constant. This directly improved a previous result of Pyber, R\"odl and Szemer\'edi \cite{PRSz95}, and also improves upon Pyber's bound in Theorem~\ref{thm:pyber} when the host graph is fairly dense (more precisely, when its average degree is greater than roughly $n^{4/5}$).

Despite continued related work, such as that of Bollob\'as, Kim and Verstra\"ete \cite{BKV06} on the threshold for a random graph to contain an $r$-regular subgraph, and many papers on finding regular subgraphs in hypergraphs (see, e.g., \cite{MV09,DHL+,KK14,Kim16,HK18,JST22+}), the bounds $O_r(\log n)$ and $\Omega(\log \log n)$ for the Erd\H os--Sauer problem have not been improved until very recently, when Janzer and Sudakov \cite{janzer2023resolution} proved that the answer is $O_r(\log \log n)$, as follows.

\begin{theorem}[Janzer--Sudakov \cite{janzer2023resolution}] \label{thm:JSES}
    For every positive integer $r$ there exists some $C_r$ such that any $n$-vertex graph with average degree at least $C_r\log \log n$ contains an $r$-regular subgraph.
\end{theorem}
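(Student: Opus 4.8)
The plan is to reduce, through a controlled iteration, to a subgraph on very few vertices whose average degree is still proportional to $\log\log n$, and then to finish using a bound that is already available in that regime. After first passing to a subgraph with minimum degree at least $d_0:=\tfrac12 C_r\log\log n$ (deleting vertices of smaller degree removes fewer than half of the edges, so this does not empty the graph), I would build a nested sequence of subgraphs $G=G_0\supseteq G_1\supseteq\cdots$, where $G_i$ has $n_i$ vertices and $\delta(G_i)\ge d_i$, maintaining the invariants $n_{i+1}\le n_i^{\,1-c}$ for some absolute constant $c>0$ and $d_{i+1}\ge d_i-\kappa_r$ for some $\kappa_r$ depending only on $r$. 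Since $n_i\le n^{(1-c)^i}$, after $t=\Theta(\log\log n)$ steps we reach $n_t\le(\log n)^2$; as the total degree loss is only $t\kappa_r=O_r(\log\log n)$, choosing $C_r$ to be a large enough multiple of $\max(\kappa_r,r^2)$ keeps $\delta(G_t)\ge\tfrac14 C_r\log\log n>32r^2\log n_t$, and Pyber's Theorem~\ref{thm:pyber} applied to $G_t$ yields an $r$-regular subgraph. (Alternatively one can arrange that $G_t$ is close to regular and conclude via the Alon--Friedland--Kalai corollary, Corollary~\ref{cor:AFK}.) The $\log\log n$ iterations, with an additive loss of $\kappa_r$ per iteration, are exactly what meet the lower bound of Theorem~\ref{thm:PRSorigconstruction}, whose extremal construction is naturally organised into $\Theta(\log\log n)$ iterated levels when it has $n$ vertices.

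Everything therefore rests on the iteration step: given a graph $H=G_i$ with $\delta(H)\ge d$ on $m=n_i$ vertices (applied while $m>(\log n)^2$), either $H$ contains an $r$-regular subgraph, or $H$ has a subgraph on at most $m^{1-c}$ vertices with minimum degree at least $d-\kappa_r$. I would attack this dichotomy as follows. First use the almost-regularisation techniques descended from Pyber's work to find inside $H$ a subgraph that is almost-regular, in the sense that its maximum degree is at most a constant times its minimum degree; if this subgraph already lives on at most $m^{1-c}$ vertices while keeping a comparable minimum degree, then we are in the second case. Otherwise $H$ has a heavy dyadic degree layer together with substantial structure at smaller degrees, and one analyses this spread of degrees. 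The point is that failing the second conclusion means $H$ is \emph{locally sparse} at the scale $m^{1-c}$ --- every set of that many vertices spans a subgraph whose minimum degree has already dropped by $\kappa_r$ --- and for a graph that is simultaneously locally sparse at this scale and has large minimum degree, one exploits the induced neighbourhood structure to conclude. Concretely I would expect to select a small random vertex set $T$ and restrict to the vertices having close to the expected interaction with the closed neighbourhood of $T$, iterating this selection a bounded number of times within the step, to produce either a much smaller vertex set carrying essentially all of the minimum degree, or an induced subgraph close enough to regular for Corollary~\ref{cor:AFK} (or the polynomial method of Alon--Friedland--Kalai applied directly) to furnish an $r$-regular subgraph.

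The main obstacle is precisely this dichotomy, because the three demands on its output pull against one another: preserving the minimum degree up to an additive constant forbids any cleaning step that loses a constant factor, forcing a genuine polynomial drop in the number of vertices forbids simply passing to a dense spot of comparable size, and one must still retain enough control over the degree distribution for the algebraic input to remain applicable at the very end of the iteration. It is this step that pins down the value of $\kappa_r$, and hence the dependence of $C_r$ on $r$; a crude analysis yields a fairly large $\kappa_r$ (possibly even exponential in $r$), which is harmless here since the statement only asserts the existence of some constant $C_r$, and it is exactly this step that the later parts of this paper sharpen to reach $C_r=O(r^2)$. The remaining ingredients --- the initial reduction to large minimum degree, the bookkeeping over the $\Theta(\log\log n)$ iterations, and the concluding application of Theorem~\ref{thm:pyber} (or Corollary~\ref{cor:AFK}) --- are comparatively routine.
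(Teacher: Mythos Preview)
This theorem is quoted from \cite{janzer2023resolution} rather than proved in the present paper, but the proof of the sharper Theorem~\ref{thm:general ES} here follows the Janzer--Sudakov framework and can serve as the comparison. That framework is \emph{not} a $\Theta(\log\log n)$-step iteration that shrinks the vertex set polynomially while losing only an additive constant in the degree. Instead, after passing to a bipartite subgraph in which every vertex on one side $B$ has the same degree, the other side $A$ is partitioned in one shot into $O(r\log\log n)$ degree classes $A_0,A_1,\ldots,A_\ell$; a single pigeonhole then forces either many $B$-vertices to send a large share of their edges into the low class $A_0$ (Case~1, handled via almost-biregularity and Lemma~\ref{lem:almost bireg to almost reg}), or many $B$-vertices to send at least $r$ edges into some fixed $A_i$ with $i\ge1$, in which the ratio between maximum and average degree on the $A_i$ side is at most $d^{1/(2r)}$ (Case~2, handled by Lemma~\ref{lem:small jump implies k-regular}). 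No nested sequence $G_0\supseteq G_1\supseteq\cdots$ of length $\Theta(\log\log n)$ appears.

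Your proposal has a genuine gap at the iteration step. You require, at each of $\Theta(\log\log n)$ stages, either an $r$-regular subgraph or a subgraph on at most $m^{1-c}$ vertices with minimum degree at least $d-\kappa_r$, an \emph{additive} loss. But every tool you name for this dichotomy --- Pyber-type almost-regularisation (Lemma~\ref{lem:almostreg}, Proposition~\ref{prop:almostreg sharp}), passing to a dyadic degree layer, random sampling and restriction --- costs at best a constant \emph{multiplicative} factor in the degree, and typically a $\log m$ factor. A multiplicative loss $\alpha<1$ iterated $\Theta(\log\log n)$ times reduces a starting degree of $C_r\log\log n$ to $o(1)$, so the additive requirement is not a technicality but the whole content of the theorem: it is exactly what separates the $\log\log n$ answer from Pyber's $\log n$. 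Your own text concedes this when it asks for an almost-regular subgraph ``keeping a comparable minimum degree'', which is multiplicative language. The subsequent sketch (local sparsity at scale $m^{1-c}$ plus random restriction to reach either a small dense piece or near-regularity) supplies no mechanism for additive preservation either; and note that near-regularity in the sense required by Corollary~\ref{cor:AFK} to output $r=\Omega(d)$ --- maximum degree within an additive $O(1)$ of the average --- is far stronger than almost-regularity and is itself one of the main new results of this paper (Theorem~\ref{thm:constant diff}), not an off-the-shelf ingredient.
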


In this paper we resolve the problem of R\"odl and Wysocka for almost all values of $r$ and $n$, and in particular, settle the Erd\H os--Sauer problem up to an absolute constant factor. Perhaps surprisingly, the answer changes drastically around $r\approx \log n$. We prove two upper bounds, the first of which is tight when $r$ is small, and the second of which is tight when $r$ is large.

\begin{theorem} \label{thm:general ES}
	There exists some $C$ such that for all positive integers $r,n\geq 3$, every $n$-vertex graph with average degree at least $Cr^2 \log \log n$ contains an $r$-regular subgraph. 
\end{theorem}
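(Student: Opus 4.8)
The plan is to prove the theorem by a round‑based argument of depth $\Theta(\log\log n)$ that finishes with Pyber's bound. First I would dispose of large $r$: if $r\ge\log n$ then $\log(n/r)\le\log n\le r\le r\log\log n$, so $Cr\log(n/r)\le Cr^2\log\log n$ and the conclusion follows from the companion bound announced in the abstract (that average degree at least $Cr\log(n/r)$ already forces an $r$-regular subgraph). Hence we may assume $r<\log n$, which will be used only through $\log r<\log\log n$; passing once to a subgraph of minimum degree at least half the average degree, we may also assume $G$ has minimum degree $\delta_0\ge\tfrac C2\,r^2\log\log n$.

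The heart of the argument is the following dichotomy, which I would apply repeatedly: if $H$ is an $N$-vertex graph of minimum degree $\delta\gg r^2$, then \emph{either} $H$ contains an $r$-regular subgraph, \emph{or} $H$ has a subgraph on at most $N^{1-c}$ vertices with minimum degree at least $\delta-C'r^2$, for absolute constants $c\in(0,1)$ and $C'$. Granting this, starting from $H_0=G$ one obtains a nested sequence $G=H_0\supseteq H_1\supseteq\cdots$ with $H_i$ on $N_i\le n^{(1-c)^i}$ vertices and $\delta(H_i)\ge\delta_0-iC'r^2$, and one stops either when some $H_i$ yields an $r$-regular subgraph or when $N_i^{1-c}<\delta(H_i)$ (so the ``shrink'' alternative can no longer even be stated). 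Since the vertex count is raised to a fixed power below $1$ each round, the stopping time is $\Theta(\log\log n)$; the total loss in minimum degree is therefore $O(r^2\log\log n)$, so at the end $\log N_i=O(\log r+\log\log\log n)=O(\log\log n)$ while $\delta(H_i)\ge\delta_0-O(r^2\log\log n)>32\,r^2\log N_i$ once $C$ is large, and Pyber's Theorem~\ref{thm:pyber} supplies an $r$-regular subgraph of $H_i\subseteq G$.

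It then remains to establish the dichotomy. The easy branch is when $H$ is close to regular, say its maximum degree is at most $K\delta$ for a suitable absolute constant $K$: then I would invoke the key step of the paper, namely that every $K$-almost-regular graph of average degree $d$ contains an $r$-regular subgraph once $d$ exceeds a suitable multiple of $r$ (the abstract phrases this as ``for some $r=\Omega_K(d)$''; the Alon--Friedland--Kalai machinery is flexible enough to also pin down the prescribed~$r$), which applies here since $d\ge\delta\gg r$. The key step itself is proved by first running the novel random process to pass from $H$ to a subgraph that is \emph{very nearly} regular --- maximum degree within, say, a factor $1+1/r$ of the minimum degree --- while losing only a constant factor in the average degree, and then applying the Alon--Friedland--Kalai theorem (Theorem~\ref{thm:AFK}), which delivers regular subgraphs precisely in such very-nearly-regular hosts; the recent sunflower-lemma bounds are used to extract a sufficiently regular and sufficiently large substructure from the family of candidates produced by the random process.

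The hard part --- the main obstacle --- is the remaining branch: when $H$ is far from regular, one must actually produce the vertex-shrinking subgraph, i.e.\ show that if $H$ has minimum degree $\gg r^2$ yet \emph{no} subgraph on at most $N^{1-c}$ vertices of minimum degree at least $\delta-C'r^2$, then $H$ already contains an $r$-regular subgraph. This is exactly the point at which the bound $O(r^2\log\log n)$, rather than a Pyber-type $O(r^2\log n)$, is won. The natural strategy is to track a dominant dyadic degree class of $H$ and iterate a cleaning/restriction step (using a bounded band of degree classes around the dominant one to recover a good minimum degree, at a cost of $O(r^2)$ per step): the hypothesis that $H$ has no much-smaller subgraph of almost the same minimum degree should force this process to drive the vertex count below $N^{1-c}$ after only $O(\log\log N)$ rounds, which caps the average degree of $H$ at $O(\delta\log\log N)$ and, once the per-round loss is absorbed, leaves a $K$-almost-regular subgraph of average degree $\gg r$, returning us to the easy branch. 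I expect the delicate points to be: keeping the per-round loss in minimum degree down to $O(r^2)$ (this is what caps the final bound at quadratic in $r$); choosing the band of degree classes and the parameters $K$, $c$ and the density thresholds so that nothing degrades over the $\Theta(\log\log n)$ outer rounds; and, above all, turning the usual ``$\log N$-loss'' almost-regularization into a ``$\log\log N$-loss'' one by exploiting the absence of small dense subgraphs at every scale --- this being the real crux of the whole proof.
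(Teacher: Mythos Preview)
Your outline diverges substantially from the paper's proof, and the divergence lands precisely on the step you flag as ``the main obstacle'': the hard branch of your dichotomy is not established, and I do not see how to establish it along the lines you sketch. Concretely, your central claim is that if $H$ has minimum degree $\delta\gg r^2$ then either $H$ contains an $r$-regular subgraph or $H$ contains a subgraph on at most $|H|^{1-c}$ vertices with minimum degree at least $\delta-C'r^2$. For the second alternative you offer only a vague ``track a dominant dyadic degree class and iterate a cleaning step,'' but standard degree-class restriction shrinks the vertex set by a constant factor, not by a fixed power; getting a power-shrink in one step, with only an $O(r^2)$ loss in minimum degree, is exactly where a new idea would be needed, and none is supplied. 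Without this, the outer iteration does not terminate in $O(\log\log n)$ rounds and the whole bookkeeping collapses back to a Pyber-type $O(r^2\log n)$ bound.

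The paper does something structurally different: there is no outer shrinking iteration at all. After passing to a bipartite subgraph $H$ with parts $A,B$ in which every $v\in B$ has degree $\Theta(r^2\log\log n)$, one partitions $A$ by $d_H(u)$ into a ``low'' class $A_0=\{u:d_H(u)\le 2^{t_0}\}$ with $t_0=\Theta(r\log\log n)$, and $\Theta(r\log\log n)$ ``high'' classes $A_i$ whose degree windows $[2^{t_{i-1}},2^{t_i}]$ satisfy $t_i/t_{i-1}\le 1+\tfrac{1}{3r}$. One of two cases must hold. In \textbf{Case 1} most edges go to $A_0$; then $H$ restricted to $A_0\cup B$ is $(L,s)$-almost-biregular with $L=2^{t_0}$ and $s=\Theta(r^2\log\log n)$, and the Janzer--Sudakov lemma (Lemma~\ref{lem:almost bireg to almost reg}) yields a $64$-almost-regular subgraph of average degree $\Omega(s/\log L)=\Omega(r)$, to which Theorem~\ref{thm:regular in almost regular} applies. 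In \textbf{Case 2} some class $A_i$ absorbs a $1/\Theta(r\log\log n)$ fraction of the edges; the resulting bipartite graph has every $B$-degree equal to $r$ and all $A$-degrees in a window $[d,d^{1+1/(2r)}]$ with $d$ large, and a separate lemma (Lemma~\ref{lem:small jump implies k-regular}) produces an $r$-regular subgraph directly. This second lemma is where the sunflower bound enters: one passes to the $r$-uniform multi-hypergraph on $A_i$ whose edges are the neighbourhoods $N_H(v)$, counts matchings, and finds an $r$-sunflower of perfect matchings on a common vertex set via Lemma~\ref{lemma:sunflower}.

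Two specific misattributions in your proposal are worth correcting. First, the sunflower argument is \emph{not} used inside the ``almost-regular $\Rightarrow$ regular'' step; Theorem~\ref{thm:regular in almost regular} is proved purely by the iterated random sparsification (Lemma~\ref{lem:one step with aux}) followed by Alon--Friedland--Kalai. The sunflower bound is used only in Case~2, precisely to handle the situation you call the hard branch. Second, the paper never falls back on Pyber's Theorem~\ref{thm:pyber} as the endgame; the two cases above are each resolved by their own dedicated lemma.
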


\begin{theorem}\label{thm:reg}
There exists some $C$ such that for all positive integers $r,n$ with $r\leq n/2$, every $n$-vertex graph with average degree at least $Cr\log(n/r)$ contains an $r$-regular subgraph.
\end{theorem}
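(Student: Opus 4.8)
To prove Theorem~\ref{thm:reg} the plan is to reduce to the case where the host graph is almost regular and then invoke the paper's key structural result that an almost-regular graph of average degree $d$ already contains a regular subgraph of degree $\Omega(d)$. The subtle point --- and the reason the bound here is $\log(n/r)$ rather than the $\log n$ that a naive use of the regularisation machinery gives --- is that one must use the version of the ``almost-regularisation'' lemma whose logarithmic loss is governed by the ratio $\Delta/\delta$ of the maximum to the minimum degree of the host graph (not by $n$), and then make that ratio small by first passing to a suitable core. For $r$ below a fixed constant and $n$ large the hypothesis here already implies that of Theorem~\ref{thm:general ES}, so we may assume $r$ is large; the remaining bounded cases, and the regime where $r$ is so close to $n/2$ that the hypothesis is vacuous once $C$ is large, are handled directly (the former via Theorem~\ref{thm:pyber}).

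So let $G$ be an $n$-vertex graph of average degree at least $Cr\log(n/r)$, with $C$ a large absolute constant. First pass to a subgraph $G'$ with $\delta(G')\geq \tfrac12 Cr\log(n/r)$: this exists since deleting a vertex of current degree below $\tfrac12 Cr\log(n/r)$ destroys fewer than that many edges, so not every vertex can be removed. Every degree of $G'$ now lies in the interval $[\tfrac12 Cr\log(n/r),\, n]$, whose endpoints have ratio at most $2n/(Cr)$, so the interval meets only $O(\log(n/r))$ dyadic blocks. Running the standard dyadic almost-regularisation argument on $G'$ --- whose only logarithmic loss is proportional to the number of dyadic degree-blocks present --- therefore produces a $2$-almost-regular subgraph $H$ losing only a factor $O(\log(n/r))$, so that $H$ has average degree $\Omega(Cr)$ (recall $\delta(G') = \tfrac12 Cr\log(n/r)$), which for $C$ large is at least $C''r$ for any prescribed constant $C''$. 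Note also $v(H) > \delta(H) = \Omega(Cr)$, so there is room for a regular subgraph of degree $\Theta(r)$ inside $H$.

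It remains to pull an \emph{exactly} $r$-regular subgraph out of the $2$-almost-regular graph $H$ of average degree at least $C''r$. For this I would run the paper's central tool --- the new random process that turns an almost-regular graph into a very nearly regular subgraph --- with target degree of order $r$, obtaining a subgraph all of whose degrees lie in a short window around some value $\Theta(r)$; then a short further cleaning step together with the algebraic method of Alon, Friedland and Kalai (Corollary~\ref{cor:AFK}) --- passing, via Bertrand's postulate, to a subgraph of maximum degree at most $2p-1$ and average degree exceeding $2p-2$ for a prime $p$ in the right range, extracting a $p$-regular subgraph, and then correcting the degree down to $r$ --- delivers the desired $r$-regular subgraph.

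The main obstacle is this last step. The coring trick behind the $\log(n/r)$ saving is essentially bookkeeping once a range-sensitive regularisation lemma is in hand; by contrast, forcing the degree of the final regular subgraph to be \emph{exactly} $r$ for arbitrary (in particular composite) $r$ --- rather than merely some value $\Omega(r)$, or a prime near $r$ --- is where the real work lies, and is exactly why one needs to be able to steer the random process towards a prescribed target degree, not just the ``some $r = \Omega_K(d)$'' form of the key structural result. One also has to check that almost-regularisation does not return a subgraph $H$ too small to contain a regular subgraph of degree $\Theta(r)$ (which follows from $\delta(H) < v(H)$), and to dispose of the degenerate small-$n$ and large-$r$ regimes, as noted above.
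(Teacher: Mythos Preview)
Your high-level strategy matches the paper's exactly: pass to an almost-regular subgraph losing only a factor $O(\log(n/r))$, then invoke Theorem~\ref{thm:regular in almost regular}. The paper's proof is two lines once those ingredients are in place: apply Proposition~\ref{prop:almostreg sharp} directly to $G$ (no preliminary coring is needed, since the loss there is $\log(32n/d)$ with $d\geq Cr\log(n/r)$, already $O(\log(n/r))$), and then apply Theorem~\ref{thm:regular in almost regular} to the resulting $4$-almost-regular subgraph. Your coring step is harmless but redundant, and your justification via ``dyadic degree-blocks'' is not how the regularisation works: partitioning vertices by degree class does not by itself produce an almost-regular subgraph (degrees drop uncontrollably when you restrict to a class). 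The lemma you actually need is Proposition~\ref{prop:almostreg sharp}, whose proof is Pyber's matching-extraction argument, not a dyadic pigeonhole.

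The real gap is your final step. You correctly identify that one must obtain an \emph{exactly} $r$-regular subgraph, not merely $s$-regular for some $s=\Omega(r)$, but your proposed fix --- apply Corollary~\ref{cor:AFK} to get $p$-regular for a prime $p$ and then ``correct the degree down to $r$'' --- is incomplete: in a general graph there is no obvious way to pass from an $s$-regular subgraph to an $r$-regular one for arbitrary $r<s$. Nor does the random process have a ``target degree'' knob you can steer. The paper's resolution (inside the proof of Theorem~\ref{thm:regular in almost regular}, which you should simply cite rather than re-derive) is to pass to a bipartite subgraph \emph{before} running the near-regularisation process; then the nearly-regular output, and hence the $s$-regular subgraph produced by Corollary~\ref{cor:AFK}, is bipartite, and repeated applications of Hall's theorem strip it down to $r$-regular for every $r\leq s$. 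With that one trick your outline becomes the paper's proof; your separate handling of small $r$ via Theorem~\ref{thm:general ES} or Theorem~\ref{thm:pyber} is then unnecessary.
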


We prove matching lower bounds for both results, showing that Theorem \ref{thm:general ES} is tight for $r<(\log n)^{1-\Omega(1)}$, whereas Theorem \ref{thm:reg} is tight for $r\geq \log n$. In particular, to resolve the Erd\H os--Sauer problem up to an absolute constant, our Theorem \ref{thm:general ES} improves the value of $C_r\approx r^{16}$ used in the proof of Theorem \ref{thm:JSES}.

\begin{prop} \label{prop:constructiondsmall}
	There is some $c>0$ such that for all positive integers $r$ and $n$ with $3\leq r\leq \frac{1}{2}\log n$, there exists an $n$-vertex graph with average degree at least $cr^2\log(\frac{\log n}{r})$ which does not contain an $r$-regular subgraph. In particular, Theorem \ref{thm:general ES} is tight up to the value of $C$ provided that $n$ is sufficiently large compared to $r$.
\end{prop}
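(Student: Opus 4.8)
The plan is to build the required graphs by an iterative construction which adapts, and ``$r$-scales'', the construction of Pyber, R\"odl and Szemer\'edi underlying Theorem~\ref{thm:PRSorigconstruction}. I would produce a sequence of graphs $G_0,G_1,\dots,G_\ell$, where $G_0$ is a fixed sparse gadget on $n_0=2^{\Theta(r)}$ vertices with no $r$-regular subgraph, and where each $G_{i+1}$ is obtained from $G_i$ by taking several disjoint copies of $G_i$ and joining them through a \emph{connector} attached along a prescribed independent set of ``ports''. One arranges that (i) passing from $G_i$ to $G_{i+1}$ increases $\log\log |V(G_i)|$ by $\Theta(1)$; (ii) it increases the average degree by $\Theta(r^2)$; and (iii) no $G_i$ contains an $r$-regular subgraph. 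Running this for $\ell=\Theta\!\left(\log\!\left(\tfrac{\log n}{r}\right)\right)$ steps then yields a graph on at most $n$ vertices with average degree $\Theta\!\left(r^2\log\!\left(\tfrac{\log n}{r}\right)\right)$, and a final rounding/padding step produces exactly $n$ vertices at the cost of a constant factor only. The structural reason the answer differs from the constant-$r$ case is that the connector can avoid creating $r$-regular subgraphs only once the copies it joins are already of size at least $2^{\Omega(r)}$; this forces the iteration to \emph{start} at scale $2^{\Theta(r)}$, which is exactly what turns the $\log\log n$ of Theorem~\ref{thm:PRSorigconstruction} into $\log\!\left(\tfrac{\log n}{r}\right)$ and is consistent with the answer to the Erd\H os--Sauer problem changing near $r\approx\log n$.

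The heart of the construction is the design and analysis of the connector, and this is where I expect the main difficulty to lie. What is required is, for every scale $m\ge 2^{\Omega(r)}$, a graph $H=H_m$ with a prescribed independent set $P$ of ports such that $|V(H)|=\Theta(m)$, $H$ has average degree $\Theta(r^2)$, and $H$ satisfies the robustness property that it has no nonempty subgraph in which every non-port vertex has degree exactly $r$ while every port has degree at most $r$ (in particular $H$ itself has no $r$-regular subgraph). Attaining average degree $\Theta(r^2)$ together with this robustness property is precisely what is delicate: the connector cannot simply be bipartite with the ports on one side and low-degree vertices on the other (this reaches only average degree $\Theta(r)$), so the connector itself must be assembled recursively out of smaller instances of the construction, or out of suitable design- or girth-type graphs, so that already the structure around any non-port vertex forbids degree-$r$ assignments — and this is exactly where the lower bound $m\ge 2^{\Omega(r)}$ on the scale enters, and also why the average degree does not saturate over the $\ell$ steps. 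Given such connectors, the level step is straightforward: take enough disjoint copies of $G_i$ that their low-degree vertices, numbering $\Theta(|V(G_{i+1})|)$ in total, can be identified with the ports of one connector $H$ of the appropriate scale, and let the edges of $H$ raise the degree of each of these vertices by $\Theta(r^2)$.

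Granting the connector, the inductive preservation of ``no $r$-regular subgraph'' goes as follows. Suppose $F\subseteq G_{i+1}$ is $r$-regular. Every non-port vertex of the connector $H$ has all of its edges inside $H$, so in $F\cap H$ every non-port vertex of the support has degree exactly $r$ and every port has degree at most $r$; by the robustness property of $H$, this forces $F\cap H=\emptyset$. Hence $F$ uses no edge of any connector, so at every port the degree of $F$ comes entirely from the copy of $G_i$ containing it, and therefore $F$ restricts to an $r$-regular subgraph of one of the copies of $G_i$ — contradicting the inductive hypothesis. The subtlety, and the crux of the whole argument, is to make ``$F\cap H=\emptyset$'' actually rigorous: one must control how $F$ can split its degree at the ports between the connector and the copies, which is exactly what the strong form of the no-near-regular-subgraph property of $H$ is tailored for, and simultaneously fitting together the gadget, the number of copies and the degree accounting is the part that requires genuine care. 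The remaining steps are routine: the recursion $\log\log|V(G_{i+1})|=\log\log|V(G_i)|+\Theta(1)$ together with $|V(G_0)|=2^{\Theta(r)}$ gives $\ell=\Theta\!\left(\log\!\left(\tfrac{\log n}{r}\right)\right)$ steps; the average degree grows linearly in $\ell$; padding gives exactly $n$ vertices with only a constant-factor loss; and since $\log\!\left(\tfrac{\log n}{r}\right)\ge\tfrac12\log\log n$ whenever $n\ge n_0(r)$, the bound certifies that Theorem~\ref{thm:general ES} is tight up to the value of $C$.
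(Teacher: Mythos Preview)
Your high-level parametrics --- $\Theta\!\big(\log(\tfrac{\log n}{r})\big)$ levels, each contributing $\Theta(r^2)$ to the average degree, starting at scale $2^{\Theta(r)}$ --- match the paper's, but your formalisation via a connector $H_m$ with the stated robustness property has a genuine quantitative obstruction, and the paper proceeds quite differently.

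The obstruction is this. The robustness property (no nonempty subgraph of $H$ in which every non-port vertex of the support has degree exactly $r$ and every port has degree at most $r$) already forbids any non-port vertex $v$ from having $r$ port-neighbours: the star on $v$ and those $r$ ports is precisely such a subgraph. Hence every non-port has at most $r-1$ port-neighbours. But you also stipulate that the ports form an independent set, that each port receives degree $\Theta(r^2)$ in $H$, and that the ports number $\Theta(|V(G_{i+1})|)$. Double-counting the port--non-port edges then gives
\[
|\text{non-ports}|\ \ge\ \frac{\Theta(r^{2})\cdot|\text{ports}|}{r-1}\ =\ \Theta(r)\cdot |V(G_{i+1})|,
\]
contradicting $|\text{non-ports}|\le |V(G_{i+1})|$ as soon as $r$ is large. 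So no connector with your listed specifications exists, and the recursion cannot even be started. Your suggestion to ``assemble the connector recursively out of smaller instances'' does not help here: the incompatibility is between the port degrees and the robustness condition, not in the internal structure among the non-ports.

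By contrast, the paper gives a one-shot random bipartite construction and avoids connectors altogether. Take $A$ of size $n$ and layers $B_i$ with $|B_i|=n/2^{2^{20i/r}}$, for $i$ ranging over an interval of length $\Theta\!\big(r\log(\tfrac{\log n}{r})\big)$; join each vertex of $A$ to $\lfloor r/8\rfloor$ random vertices in every $B_i$. A union bound shows that, with positive probability, for every $m$ no $m$-set contained in $A$ together with the layers $B_i$ of size at least $m$ spans $mr/8$ edges; a putative $r$-regular subgraph on $m$ vertices would have to, since the remaining layers (those with $|B_i|<m$) are together too small to carry a quarter of its edges. The $r^{2}$ emerges simply as $\lfloor r/8\rfloor$ edges into each of $\Theta\!\big(r\log(\tfrac{\log n}{r})\big)$ layers, with no gadget and no deterministic robustness property required.
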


\begin{restatable}{prop}{lowerbounddlarge}
 \label{prop:lower bound d large}
    There is some $c>0$ such that for all positive integers $r,n\geq 2$ with $\frac{1}{2}\log n\leq r\leq n/100$, there exists an $n$-vertex graph with average degree at least $cr\log(n/r)$ which does not have an $r$-regular subgraph.
\end{restatable}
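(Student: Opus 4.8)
The plan is to give a recursive construction. First, if $r\geq n/1000$ (so that $\log(n/r)=\Theta(1)$), it is enough to take the disjoint union of $\lfloor n/r\rfloor$ copies of $K_r$, together with isolated vertices to reach $n$ vertices in total: this graph has average degree $\Theta(r)=\Theta(r\log(n/r))$ and, since each of its connected subgraphs has maximum degree at most $r-1$, it has no $r$-regular subgraph. So I may assume $r\leq n/1000$. In this regime I would build the example through $\ell:=\Theta(\log(n/r))$ stages, producing graphs $G_0,G_1,\dots,G_\ell$. The base graph $G_0$ is a disjoint union of suitably many copies of $K_s$ with $s$ an integer slightly below $r$, so it trivially has no $r$-regular subgraph and has minimum degree $s-1=\Theta(r)$. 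To pass from $G_{i-1}$ to $G_i$ I would take a constant number $t\geq 2$ of vertex-disjoint copies of $G_{i-1}$ and add a carefully designed ``connector'' of extra edges (possibly also using a bounded number of new vertices) joining the copies, with the properties that every vertex gains $\Theta(r)$ to its degree, the vertex count grows by at most a constant factor, and there is still no $r$-regular subgraph. After $\ell$ stages this yields a graph on $\Theta(r\cdot t^{\ell})$ vertices with minimum --- hence average --- degree $\Theta(r\ell)=\Theta(r\log(n/r))$; tuning $s$, $t$, $\ell$ and the number of base copies, and padding with isolated vertices, makes the vertex count exactly $n$. That this fits inside $n$ vertices is just the inequality $r\log(n/r)=O(n)$, which holds throughout the stated range $\tfrac12\log n\leq r\leq n/100$ when $n$ is large. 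The point of using only a constant-factor blow-up per stage --- rather than the polynomial blow-up in the Pyber--R\"odl--Szemer\'edi construction for fixed $r$ --- is precisely that we want $\Theta(\log(n/r))$ stages, each contributing $\Theta(r)$ to the degree, rather than $\Theta(\log\log n)$ stages.

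The heart of the matter, and the step I expect to be the main obstacle, is designing the connector so that the recursive step preserves the absence of an $r$-regular subgraph. A naive connector fails: a $\Theta(r)$-regular bipartite graph between two copies of $G_{i-1}$ is already $r$-regular on its own (or becomes so once one adds a perfect matching inside each copy), so the connector must be genuinely irregular --- and then it is no longer obvious that no $r$-regular subgraph survives. The strategy I would use is to carry through the recursion an invariant stronger than ``$G_i$ has no $r$-regular subgraph'': roughly, a statement controlling, for every subgraph $H$ of $G_i$, how the degrees of $H$ can be distributed among its vertices. Given this for $G_{i-1}$, a hypothetical $r$-regular subgraph $H$ of $G_i$ would be analysed by restricting to a single copy of $G_{i-1}$: the irregularity engineered into the connector forces some vertex $v$ of $H$ inside that copy to have only few incident connector edges used by $H$, hence large degree within the copy, contradicting the strengthened invariant for $G_{i-1}$. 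Identifying an invariant that is at once strong enough to self-propagate across the connector and loose enough to tolerate a $\Theta(r)$ gain in degree at every stage is where the real work lies.

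One concrete way to instantiate the connector is to build it from complete bipartite graphs $K_{s',m}$ with $s'<r$, glued along their ``small'' sides of size $s'$; for such a gluing, freeness from an $r$-regular subgraph can be seen directly by a peeling argument --- processing the bicliques in reverse order of the gluing, the large side of each biclique has at most $s'<r$ remaining neighbours and so cannot be used by any subgraph of minimum degree $\geq r$. The challenge with this cruder gadget is that, naively arranged, each vertex lies in only boundedly many bicliques, so the average degree stays $\Theta(r)$ rather than accumulating to $\Theta(r\log(n/r))$ over the stages; making the pieces interact enough to genuinely build up the density, without creating an $r$-regular subgraph, is exactly the tension that the invariant-based approach above is designed to resolve. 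Whichever route one takes, the essential difficulty is the same: to reconcile a large average degree with the complete absence of an $r$-regular subgraph.
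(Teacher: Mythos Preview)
Your proposal identifies its own fatal gap without closing it: the entire argument hinges on a ``connector'' that adds $\Theta(r)$ to every vertex's degree while preserving the absence of an $r$-regular subgraph, and you yourself say that designing it (and the accompanying invariant) ``is where the real work lies'' --- but no connector is ever specified, no invariant is stated, and no verification is carried out. Worse, there is a structural obstruction to the plan as literally phrased. If every vertex gains $\Theta(r)$ at each of the $\ell$ stages (with $\Theta$ a two-sided bound, as written), then $G_\ell$ has all degrees in $[\Omega(r\ell),O(r\ell)]$ and is therefore $O(1)$-almost-regular; but by Theorem~\ref{thm:regular in almost regular} any such graph contains an $s$-regular subgraph for every $s\leq c\,r\ell$, and hence (once $\ell$ exceeds a constant, and working bipartitely so that one can pass from $s$-regular down to $r$-regular via Hall's theorem) an $r$-regular one. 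So the connector \emph{cannot} add $\Theta(r)$ uniformly; degree growth must be highly nonuniform across vertices, and then your intended analysis --- ``some vertex of $H$ has few connector edges, hence large degree inside its copy of $G_{i-1}$'' --- loses traction, because the invariant must now track a genuine degree distribution rather than a single maximum. Neither of your two suggested routes (the unspecified invariant, or the $K_{s',m}$ gadget) is developed far enough to address this.

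For comparison, the paper bypasses recursion entirely with a one-shot random construction. Take a bipartite graph between a set $A$ of size $n/2$ and layers $B_1,\dots,B_\ell$ with $|B_i|=2^ir$ and $\ell=\tfrac12\log(n/r)$, where each $A$--$B_i$ edge is present independently with probability $p_i=1/(100\cdot 2^i)$; each vertex of $A$ then has expected degree $\Theta(r)$ into each layer, giving average degree $\Theta(r\log(n/r))$. A Chernoff-plus-union-bound computation shows that, with positive probability, for every $m$ no $m$ vertices of $A$ send more than $mr/2$ edges into the layers $B_i$ with $|B_i|>m/4$. Since the remaining (small) layers together contain fewer than $m/2$ vertices, a hypothetical $r$-regular subgraph with $m$ vertices in $A$ would need more than $mr/2$ edges into the large layers, a contradiction. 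The union bound closes precisely because $r\geq\Omega(\log n)$, and the small-$r$ end of the range is handled separately via Proposition~\ref{prop:constructiondsmall}. This direct probabilistic argument avoids all the difficulties of maintaining an inductive invariant.
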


We prove Propositions \ref{prop:constructiondsmall} and \ref{prop:lower bound d large} using careful modifications of the construction of Pyber, R\"odl and Szemer\'edi \cite{PRSz95}.
We note that a similar, but slightly less general result to Proposition \ref{prop:lower bound d large} was obtained by Buci{\'c}, Kwan, Pokrovskiy, Sudakov, Tran, and Wagner~\cite{bucic2020nearly}.

Recall that $d(r,n)$ is defined as the smallest $d$ such that every $n$-vertex graph with average degree at least $d$ contains an $r$-regular subgraph. Combining Theorems \ref{thm:general ES} and \ref{thm:reg} and  Propositions \ref{prop:constructiondsmall} and \ref{prop:lower bound d large}, we obtain the following interesting `phase transition' phenomenon:
\begin{itemize}
    \item for $r<(\log n)^{1-\Omega(1)}$, we have $d(r,n)=\Theta(r^2\log \log n)$, whereas
    \item for $r\geq \log n$, we have $d(r,n)=\Theta(r \log(n/r))$.
\end{itemize}

\subsection{Almost-regular graphs}

In the process of proving Theorems \ref{thm:general ES} and \ref{thm:reg}, we will consider various notions of `near-regularity'. Proving results about nearly-regular subgraphs is interesting in its own right, and has already attracted a lot of interest. The notion that was the most extensively studied is `almost-regularity'.

\begin{defn}
    A graph is called \emph{$K$-almost-regular} if the ratio between the maximum degree and the minimum degree is at most $K$.
\end{defn}

An important reason why this notion is useful is the celebrated `regularisation lemma' of Erd\H os and Simonovits \cite{ES70} from 1970. This result states that any $n$-vertex graph with at least $n^{1+\alpha}$ edges contains a $K$-almost-regular subgraph with $m$ vertices and at least $\frac{2}{5}m^{1+\alpha}$ edges for some $K=K(\alpha)$ and $m=\omega(1)$. This result has since become one of the most widely used tools in Tur\'an-type problems, as it allows one to reduce such problems to the easier case where the host graph is almost-regular. This demonstrates the value of results that find `dense' regular or nearly-regular subgraphs in general graphs, which are the subject of this paper.

Another useful result, which will be the starting point in our proof of Theorem \ref{thm:reg}, allows one to find an almost-regular subgraph in an arbitrary graph at the cost of a $\log n$ factor loss in the degree. The following result was proved in \cite{ARS+17} and in \cite{bucic2020nearly}, using a variant of Pyber's proof~\cite{pyber1985regular} of Theorem \ref{thm:pyber}.

\begin{lemma}[Alon et al.\ \cite{ARS+17} and Buci\'c et al.\ \cite{bucic2020nearly}] \label{lem:almostreg} Every $n$-vertex graph with average degree $d$ contains a $4$-almost-regular subgraph with average degree at least $\frac{d}{100\log n}$.    
\end{lemma}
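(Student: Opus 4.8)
The plan is to adapt Pyber's argument for Theorem~\ref{thm:pyber}: push almost all of the edges of $G$ onto vertices of roughly equal degree, and then regularise, aiming to lose only a single logarithmic factor.

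\emph{Reduction to bounded minimum degree.} First pass to a subgraph $G'\subseteq G$ with $\delta(G')\ge d/2$: such a subgraph exists because repeatedly deleting a vertex of degree below $d/2$ destroys fewer than $d\cdot|V(G)|/2\le e(G)$ edges in total, so the process cannot destroy every edge. Now $G'$ has $n'\le n$ vertices and average degree at least $d/2$, so it suffices to find a $4$-almost-regular subgraph of $G'$ keeping a $1/O(\log n)$ fraction of its edges while using at most $n'$ vertices, since such a subgraph then has average degree $\Omega(e(G')/(n'\log n))=\Omega(d/\log n)$. (If $d<100\log n$ we are already done, since a single edge of $G$ is $1$-almost-regular with average degree $1\ge d/(100\log n)$; so assume $d\ge 100\log n$.)

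\emph{Finding a rich degree scale.} Partition $V(G')$ into the dyadic degree classes $L_j=\{v:2^j\le\deg_{G'}(v)<2^{j+1}\}$. Since all degrees of $G'$ lie in $[d/2,n'-1]$, only $O(\log n)$ of these are nonempty, and as every edge of $G'$ meets at least one class there is an index $j_0$ with at least $\Omega(e(G')/\log n)$ edges meeting $L:=L_{j_0}$. Writing $D:=2^{j_0}$, every vertex of $L$ has degree in $[D,2D)$ in $G'$. Let $H$ be the spanning subgraph of $G'$ consisting exactly of the edges meeting $L$; then $e(H)=\Omega(e(G')/\log n)$, every vertex of $L$ retains its $G'$-degree in $H$, so the part of $H$ spanned by $L$ is already $2$-almost-regular, and the only remaining task is to tame the set $M:=V(H)\setminus L$, whose degrees in $H$ may vary widely, while keeping a constant fraction of $e(H)$.

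\emph{Regularising.} I would split according to whether at least half of $e(H)$ lies inside $L$ or at least half joins $L$ to $M$. In the first case $G'[L]$ has $\ge e(H)/2$ edges and maximum degree below $2D$, and as each of its vertices is incident to $\ge D$ edges of $H$ it has $O(e(H)/D)$ vertices, hence average degree $\Omega(D)$; passing to a subgraph of minimum degree at least half its average degree produces an $O(1)$-almost-regular subgraph with $\Omega(e(H))$ edges and at most $n$ vertices, of average degree $\Omega(d/\log n)$. In the second case one works with the bipartite graph between $L$ and $M$ carrying $\Omega(e(H))$ edges, in which all $L$-degrees are below $2D$; deleting the $M$-vertices whose degree falls below a threshold chosen so that only a constant fraction of the edges is lost, and then cleaning, yields an $O(1)$-almost-regular subgraph of the required average degree, and tuning the constants exactly as in \cite{ARS+17,bucic2020nearly} upgrades ``$O(1)$-almost-regular'' to ``$4$-almost-regular''.

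\emph{Main obstacle.} The difficulty is concentrated in the second case of the regularisation step. Forcing \emph{both} sides of the bipartite graph between $L$ and $M$ into a common bounded degree window naively calls for a second pigeonhole over the $O(\log n)$ dyadic degree classes of $M$, which would only yield average degree $\Omega(d/\log^2 n)$. Removing this second logarithmic loss — so that the final bound really is $\Omega(d/\log n)$, as claimed — is precisely the place where Pyber's more delicate argument \cite{pyber1985regular} must be invoked, and is the step I would spend the most care on; everything else is routine.
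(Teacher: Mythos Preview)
Your approach is genuinely different from the paper's, and it has a real gap.

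\textbf{What the paper does.} The paper (following \cite{ARS+17,bucic2020nearly}, in its proof of the slight strengthening Proposition~\ref{prop:almostreg sharp}) does \emph{not} use dyadic degree bucketing. It passes to a bipartite graph with minimum degree $\ge d/4$ and parts $A,B$, and then runs Pyber's matching iteration directly: repeatedly take a minimal nonempty $A_{i+1}\subset A_i$ with $|N_{G_i}(A_{i+1})|\le|A_{i+1}|$, apply Hall's theorem to extract a perfect matching $M_{i+1}$ between $A_{i+1}$ and $B_{i+1}:=N_{G_i}(A_{i+1})$, and set $G_{i+1}=G_i[A_{i+1},B_{i+1}]-M_{i+1}$. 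This produces a nested chain $A_1\supset\cdots\supset A_t$ with $t\approx d/2$ and $|A_t|\ge d/2$. A single pigeonhole on the sizes $|A_j|$ --- this is where the one factor of $\log n$ enters --- finds $j$ with $|A_{j+s}|\ge|A_j|/2$ for $s\approx t/\log n$, and then $M_j\cup\cdots\cup M_{j+s}$ is a graph on $A_j\cup B_j$ with maximum degree $s+1$ and at least $(s+1)|A_j|/2$ edges; passing to a subgraph of minimum degree $(s+1)/4$ gives the $4$-almost-regular subgraph.

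\textbf{The gap in your proposal.} You correctly identify that in your Case~2 the entire difficulty is regularising the $M$-side of the bipartite graph $H[L,M]$ without a second dyadic pigeonhole costing another $\log n$. But you then simply assert that ``Pyber's more delicate argument must be invoked'' and stop. That deferred step \emph{is} the proof: Pyber's argument is the matching iteration above, and it is applied directly to the original (bipartite, minimum-degree) graph, not grafted onto a bipartite graph one side of which has already been bucketed. Your degree-bucketing setup does not feed naturally into it --- once you restrict to $H[L,M]$, the $L$-degrees are no longer bounded below (a vertex of $L$ may have almost all its neighbours inside $L$), so you have lost the structure the matching iteration needs. In short, the bucketing is a detour, and the step you label ``routine tuning of constants'' is in fact the whole argument, carried out along quite different lines.
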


We will in fact need a slightly stronger variant of Lemma \ref{lem:almostreg}, which we prove using a very similar approach (see Section~\ref{sec:almost-regular}).

\begin{prop} \label{prop:almostreg sharp}
    Every $n$-vertex graph with average degree $d$ contains a $4$-almost-regular subgraph with average degree at least $\frac{d}{100\log(32n/d)}$.  
\end{prop}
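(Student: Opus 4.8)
The plan is to follow the proofs of \Cref{lem:almostreg} in \cite{ARS+17,bucic2020nearly}, which themselves refine Pyber's argument \cite{pyber1985regular}, while being careful about which dyadic degree-scales actually occur. The entire gain over \Cref{lem:almostreg} comes from one observation: once we have passed to a subgraph of minimum degree at least $d/2$, all of its degrees lie in the interval $[d/2,n)$, so only $O(\log(n/d))$ dyadic scales are relevant rather than all $\log_2 n$ of them; everything else is bookkeeping.

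\textbf{Reduction to large minimum degree.} Repeatedly delete vertices of current degree less than $d/2$; the total number of edges deleted is less than $\frac{d}{2}n\le e(G)$, so the resulting subgraph $G'$ is non-empty, with $\delta(G')\ge d/2$ and $e(G')\ge\frac{d}{2}|V(G')|$; thus $G'$ has $n':=|V(G')|\le n$ vertices and average degree $d'\ge d$. Now $x\mapsto\frac{x}{100\log(32y/x)}$ is non-decreasing in $x$ and, for fixed $x$, non-increasing in $y$ over $y\ge x$, so since $d'\ge d$ and $n'/d'\le n/d$, any $4$-almost-regular subgraph of $G'$ of average degree at least $\frac{d'}{100\log(32n'/d')}$ has average degree at least $\frac{d}{100\log(32n/d)}$. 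Hence we may assume $\delta(G)\ge d/2$, so that every degree of $G$ lies in $[d/2,n)$; moreover, if $\Delta(G)<2d$ then $G$ itself is $4$-almost-regular with average degree at least $d/2$, so we may also assume $\Delta(G)\ge 2d$.

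\textbf{Locating a heavy scale.} For an integer $i$ let $W_i:=\{v:\deg_G(v)\ge 2^i\}$; the induced subgraphs $G[W_i]$ are nested, equal to $G$ for all $i\le t_0:=\floor{\log_2(d/2)}$, and edgeless once $2^i>\Delta(G)$. As $\Delta(G)<n$, at most $L:=\ceil{\log_2(32n/d)}$ scales $i$ have $e(G[W_i])>0$. Double-counting over edges gives $\sum_{i\ge t_0}e(G[W_i])=\sum_{uv\in E(G)}\big(\floor{\log_2\min(\deg_G(u),\deg_G(v))}-t_0+1\big)\ge e(G)$, so some scale $i$ has $e(G[W_i])\ge e(G)/L\ge\frac{dn}{2L}$. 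Since $|W_i|\le\frac{2e(G)}{2^i}=\frac{dn}{2^i}$, the subgraph $G[W_i]$ has average degree at least $2^i/L$, which is $\Omega(d/L)$ because $2^i\ge 2^{t_0}>d/4$; deleting vertices of small current degree from $G[W_i]$ then yields a subgraph $H\subseteq G$ with $\delta(H)=\Omega(d/L)$ and average degree $\Omega(d/L)=\Omega(d/\log(n/d))$.

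\textbf{The main obstacle.} The subgraph $H$ has large minimum and average degree, but a priori $\Delta(H)$ could be as large as $\Theta(n)$, so $H$ need not be almost-regular. Cutting the maximum degree down to $O(2^i)$ while keeping a constant fraction of the edges is the crux of the proof, exactly as in \cite{pyber1985regular,ARS+17,bucic2020nearly}: one discards the (comparatively few) vertices of degree exceeding a suitable constant multiple of $2^i$ and then restores a large minimum degree, the delicate point being to show that this does not delete everything. I expect this step --- arguing that after removing the over-heavy vertices a dense subgraph of minimum degree $\Omega(d/L)$ survives, which in \cite{pyber1985regular} is handled by a short weighting argument that balances the relevant low-degree and high-degree dyadic classes against each other --- to be the only genuine difficulty. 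Given it, the improvement over \Cref{lem:almostreg} is immediate, since the number of relevant dyadic scales is $L=O(\log(32n/d))$ rather than $\log_2 n$.
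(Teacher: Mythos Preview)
Your intuition that the gain over \Cref{lem:almostreg} comes from only $O(\log(n/d))$ relevant scales is correct, but the framework you set up is \emph{not} the one in \cite{pyber1985regular,ARS+17,bucic2020nearly} (or in the paper), and the step you label the ``main obstacle'' is a genuine gap rather than a routine citation. Pyber's argument does not use the degree sets $W_i=\{v:\deg_G(v)\ge 2^i\}$. After passing to a bipartite graph with minimum degree $d$ and parts $A,B$ with $|A|\ge|B|$, the paper iteratively builds a chain $A_1\supset A_2\supset\cdots\supset A_d$ by taking $A_{i+1}$ to be a minimal non-empty subset of $A_i$ with $|N_{G_i}(A_{i+1})|\le|A_{i+1}|$, peeling off a perfect matching $M_{i+1}$ between $A_{i+1}$ and $B_{i+1}:=N_{G_i}(A_{i+1})$ via Hall's theorem, and setting $G_{i+1}=G_i[A_{i+1},B_{i+1}]-M_{i+1}$. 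The minimum-degree condition forces $|A_t|\ge d/2$ for $t=\lfloor d/2\rfloor$, so the sizes $|A_1|\ge\cdots\ge|A_t|$ drop from at most $n$ to at least $d/2$ and hence halve at most $\log(2n/d)$ times; pigeonhole then yields $j$ with $|A_{j+s}|\ge|A_j|/2$ for $s\approx t/\log(n/d)$, and $M_j\cup\cdots\cup M_{j+s}$ on $A_j\cup B_j$ has maximum degree $s+1$ and average degree at least $(s+1)/2$. The $\log(n/d)$ (rather than $\log n$) appears because the chain runs from $n$ down to $d/2$, not down to $1$ --- this is exactly the observation you isolate, but realised through the matching chain, not through $W_i$.

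In your outline, once you reach $G[W_i]$ with average degree $\ge 2^i/L$ but $\Delta$ possibly $\Theta(n)$, you are facing the original problem again on a graph you have no extra control over. There is no separate ``short weighting argument'' in Pyber that trims $\Delta$ down to $O(2^i)$ while keeping a constant fraction of the edges; in his proof the maximum degree is bounded \emph{structurally}, because the output is a union of $s+1$ matchings. Your proposed fix --- delete vertices of $G$-degree exceeding $C\cdot 2^i$ --- can destroy almost everything: nothing prevents $e(G[W_{i+1}])$ from being essentially equal to $e(G[W_i])$, so removing $W_{i+1}$ may leave an empty graph. Thus the step you defer is not a patch but the whole argument, and the route that actually works is the Hall-matching chain above.
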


A very important result of Alon, Friedland and Kalai (as mentioned earlier) allows one to find regular subgraphs in graphs that are already very close to being regular.

\begin{theorem}[Alon--Friedland--Kalai \cite{AFK84}] \label{thm:AFK}
    If $q$ is a prime power, $q\geq r$, and $q\equiv r \textrm{ (mod 2)}$, then every graph with maximum degree $\Delta\geq 2q-2$ and average degree $d>\frac{2q-2}{2q-1}\cdot (\Delta+1)$ contains an $r$-regular subgraph.
\end{theorem}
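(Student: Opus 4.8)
I would use the algebraic method here — the Chevalley--Warning theorem, equivalently the Combinatorial Nullstellensatz, applied over the field $\mathbb{F}_q$. The plan is to associate to each edge $e$ of $G$ a variable $x_e$, with the reading that the sought subgraph $H$ is $\{e : x_e = 1\}$, so that $\deg_H(v) = \sum_{e\ni v} x_e$, and then to produce a nonzero assignment $x \in \{0,1\}^{E(G)}$ for which every vertex sum $\sum_{e\ni v} x_e$ lies in the admissible set $\{0,r\}$: such an $x$ is exactly the indicator vector of an $r$-regular subgraph. Since $r \le q-1$, an $r$-regular subgraph has maximum degree below $q$, so I would first clean up $G$ — deleting edges at the highest-degree vertices — to reduce to the case where the maximum degree is at most a small multiple of $q$; the hypothesis $d > \tfrac{2q-2}{2q-1}(\Delta+1)$ forces the total deficiency $\sum_v(\Delta - \deg_G(v))$ to be small, so such a pruning keeps enough edges for the average-degree assumption to survive.

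\textbf{The key steps.} For each vertex $v$, introduce over $\mathbb{F}_q$ the polynomial
$$ f_v(x) \ = \ \prod_{j}\Bigl( \sum_{e\ni v} x_e - j \Bigr), $$
where $j$ ranges over the forbidden degree values — the integers in $\{1,\dots,\deg_G(v)\}$ other than $r$, read in $\mathbb{F}_q$ — so that on $\{0,1\}$-assignments $f_v(x) = 0$ precisely when $\sum_{e\ni v} x_e \in \{0, r\}$ (a vertex with $\deg_G(v) < r$ contributes instead the single linear constraint $\sum_{e\ni v} x_e = 0$). Reducing $P := \prod_v f_v$ modulo the relations $x_e^2 = x_e$ yields a multilinear polynomial, and the point is that the bounded-degree reduction together with $d > \tfrac{2q-2}{2q-1}(\Delta+1)$ is calibrated exactly so that the degree of this reduced polynomial is strictly below the number of variables $|E(G)|$. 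A Chevalley--Warning-type count then shows that the number of $\{0,1\}$-solutions of $P = 0$ is divisible by the characteristic of $\mathbb{F}_q$, hence is at least that characteristic; as $x = 0$ is always a solution, a nonzero one exists and gives the subgraph. The hypotheses $q \ge r$ and $q \equiv r \pmod 2$ serve to reconcile the $\mathbb{F}_q$-arithmetic with the target degree $r$; alternatively, if one instead sets things up to locate a $q$-regular subgraph, then since $q-r$ is even one can iteratively remove $2$-factors (Petersen's theorem) to descend to an $r$-regular subgraph.

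\textbf{The main obstacle.} The delicate part is the degree bookkeeping — proving that $\deg P$, after the modular reduction, really does fall below $|E(G)|$. Bounding $\deg f_v$ naively by $\deg_G(v)$ gives only $\deg P \le \sum_v \deg_G(v) = 2|E(G)|$, far too weak, so one must genuinely exploit near-regularity: each $f_v$ has degree $\deg_G(v) - 1$ in the $\deg_G(v)$ variables meeting $v$, leaving one ``free'' incident variable at every vertex, and the savings from all these free variables must be aggregated against the precise average-degree hypothesis (and the preliminary max-degree reduction) to beat $|E(G)|$. A secondary point needing care is to make sure the count is genuinely nonzero modulo the characteristic, and that the solution produced is honestly $\{0,1\}$-valued with the vertex sums equal to — not merely congruent to — $0$ or $r$; this is exactly why the maximum degree has to be controlled and why $q$ is taken to be a prime power as in the statement.
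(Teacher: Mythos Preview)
This theorem is not proved in the paper at all; it is quoted from Alon, Friedland and Kalai~\cite{AFK84} as a black box and used only through Corollary~\ref{cor:AFK}. There is therefore no ``paper's own proof'' to compare your proposal against.

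For what it is worth, your outline is in the right spirit --- the original argument is indeed an application of Chevalley--Warning over $\mathbb{F}_q$, and the parity condition $q\equiv r\pmod 2$ is indeed used, via Petersen's theorem, to descend from a $q$-regular subgraph to an $r$-regular one. One point where your sketch goes astray: with your polynomials $f_v=\prod_j(\sum_{e\ni v}x_e-j)$ of degree $\deg_G(v)-1$, the total degree before any reduction is $\sum_v(\deg_G(v)-1)=2|E(G)|-n$, and you would need this below $|E(G)|$, i.e.\ average degree below~$2$ --- the wrong direction. Reducing modulo $x_e^2=x_e$ does not rescue this, since the multilinear reduction of $P$ can still have degree up to $|E(G)|$ and, more to the point, the Chevalley--Warning count is over $\mathbb{F}_q^{E(G)}$, not over $\{0,1\}^{E(G)}$. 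The actual AFK argument instead uses the single polynomial $(\sum_{e\ni v}x_e)^{q-1}$ at each vertex, of degree $q-1$ independent of $\deg_G(v)$; then the total degree is $n(q-1)$, and the hypothesis is calibrated so that $n(q-1)<|E(G)|$. The resulting nonzero solution $x\in\mathbb{F}_q^{E(G)}$ is interpreted as a multigraph (each edge taken with multiplicity $x_e$), and the maximum-degree bound is what lets one pass back to a simple subgraph with the desired regularity.
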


Often this result is used via the following corollary\footnote{To see how \Cref{cor:AFK} follows from \Cref{thm:AFK}, note that one may choose $q$ to be a power of $2$ such that $\frac{d}{10\lambda}\leq q<\frac{d}{5\lambda}$ and apply \Cref{thm:AFK} with $r=q$.}.

\begin{corollary}[Alon--Friedland--Kalai] \label{cor:AFK}
    Let $\lambda\geq 1$. Then every graph with average degree $d$ and maximum degree at most $d+\lambda$ contains an $r$-regular subgraph for some $r\geq \frac{d}{10\lambda}$.
\end{corollary}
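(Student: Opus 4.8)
The statement is essentially a repackaging of \Cref{thm:AFK}, so the plan is to carry out the deduction indicated in the footnote and check the edge cases. Write $\Delta$ for the maximum degree of the graph, so that $d\le \Delta\le d+\lambda$. If $d\le 10\lambda$ then $\frac{d}{10\lambda}\le 1$, and a single edge (which exists whenever $d>0$) is a $1$-regular subgraph with $1\ge\frac{d}{10\lambda}$; so I would assume from now on that $d>10\lambda$.

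Under this assumption $\frac{d}{10\lambda}>1$, so the interval $\big[\frac{d}{10\lambda},\frac{d}{5\lambda}\big)$, whose endpoints are in ratio $2$, contains a power of $2$, and that power of $2$ is automatically at least $2$; call it $q$ and set $r:=q$. Thus $q$ is a prime power, $q\ge r$, and $q\equiv r\pmod 2$, so it remains only to verify the two degree conditions in \Cref{thm:AFK}. First, since $q<\frac{d}{5\lambda}\le\frac d5$ we get $2q-2<d\le\Delta$, so the maximum-degree condition $\Delta\ge 2q-2$ holds. Second, the average-degree condition $d>\frac{2q-2}{2q-1}(\Delta+1)$ is equivalent (as $2q-1>0$ and $\Delta+1>0$) to $\frac{\Delta+1}{2q-1}>\Delta+1-d$; since $\Delta+1-d\le\lambda+1\le 2\lambda$ using $\lambda\ge1$, it suffices to have $\Delta+1>2\lambda(2q-1)$, and indeed $2\lambda(2q-1)<4\lambda q<\frac{4d}{5}<d<\Delta+1$ because $q<\frac{d}{5\lambda}$. \Cref{thm:AFK} then yields an $r$-regular subgraph, and $r=q\ge\frac{d}{10\lambda}$, as required.

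There is no real obstacle here; the only points needing any care are the existence of the power of $2$ in the window $\big[\frac{d}{10\lambda},\frac{d}{5\lambda}\big)$ (handled by reducing to $d>10\lambda$, which simultaneously forces $q\ge 2$ so that $q$ is a genuine prime power) and keeping track of the additive constants so that $d>\frac{2q-2}{2q-1}(\Delta+1)$ holds with a little room to spare.
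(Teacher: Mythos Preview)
Your proof is correct and follows exactly the approach the paper indicates in its footnote: pick a power of $2$ in $\big[\frac{d}{10\lambda},\frac{d}{5\lambda}\big)$ and apply \Cref{thm:AFK} with $r=q$. Your handling of the edge case $d\le 10\lambda$ and the verification of the two degree hypotheses are clean and accurate.
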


We prove that almost-regular graphs contain regular subgraphs with essentially the same degree, which is a significant strengthening of Corollary \ref{cor:AFK} as the condition on the maximum degree is much less restrictive.

\begin{theorem} \label{thm:regular in almost regular}
    For each $\lambda\geq 1$, there exists some $c=c(\lambda)>0$ such that every graph with average degree $d$ and maximum degree at most $\lambda d$ contains an $r$-regular subgraph for all $r\leq cd$.
\end{theorem}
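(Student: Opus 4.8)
The plan is to use Corollary \ref{cor:AFK} as a black box, so the task reduces to finding, inside a $\lambda$-almost-regular graph $G$ with average degree $d$, a subgraph $H$ whose maximum degree exceeds its average degree by only a bounded multiplicative amount of the average degree itself—i.e.\ $\Delta(H)\leq d_H+\lambda' d_H$ for some constant $\lambda'$ depending only on $\lambda$, while $d_H=\Omega_\lambda(d)$. If we can do this, Corollary \ref{cor:AFK} (applied with the additive slack $\lambda' d_H$ playing the role of $\lambda$) immediately yields an $r$-regular subgraph for every $r\leq \frac{d_H}{10\lambda' d_H}\cdot$\dots wait—that is a constant, not $\Omega(d)$. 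So a direct application of Corollary \ref{cor:AFK} in this form is too weak, and instead I would go back to Theorem \ref{thm:AFK} itself: if $H$ has average degree $d_H$ and maximum degree at most $(1+\eps)d_H$ for a sufficiently small absolute constant $\eps>0$, then choosing a prime power $q$ with $q\equiv r\pmod 2$ near $d_H/2$, the condition $d_H>\frac{2q-2}{2q-1}(\Delta(H)+1)$ holds, so $H$ contains an $r$-regular subgraph for all $r$ up to roughly $d_H/2 = \Omega_\lambda(d)$. Thus the real goal is: \emph{every $\lambda$-almost-regular graph with average degree $d$ contains a subgraph $H$ with average degree $\Omega_\lambda(d)$ and maximum degree at most $(1+\eps)\cdot(\text{average degree of }H)$.}

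To produce such an $H$, I would iteratively delete vertices of high degree. Start with $G_0=G$, which has minimum degree at least $d/\lambda$ (after first passing to a subgraph of minimum degree at least $d/2$, say, using a standard greedy argument; then almost-regularity gives maximum degree at most $\lambda\cdot(\text{something comparable to }d)$, hence minimum degree $\geq \Delta(G)/\lambda \geq \Omega(d)$). The issue is that the maximum degree may be as large as $\lambda$ times the average. Repeatedly remove any vertex whose degree is at least $(1+\eps)$ times the current average degree. Each such removal decreases the number of edges by at least $(1+\eps)/2$ times the current average degree, which is more than enough to keep the average degree from dropping—in fact removing a vertex of degree at least $(1+\eps)d_{\mathrm{avg}}$ from an $m$-vertex graph of average degree $d_{\mathrm{avg}}$ leaves average degree at least $d_{\mathrm{avg}}\cdot\frac{m-1-\eps}{m-1}\geq d_{\mathrm{avg}}$ when $m$ is not too small; so the average degree is non-decreasing throughout the process. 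The process must terminate (the graph shrinks), and when it does, the resulting graph $H$ has maximum degree at most $(1+\eps)$ times its average degree, and average degree at least that of $G$, hence $\Omega_\lambda(d)$—provided $H$ is still large enough for the "$m$ not too small" caveat, which I'd handle by checking that we cannot delete down to a tiny graph: if $H$ had few vertices then its average degree, being $\geq d/\lambda$, would force many edges among few vertices, and one can arrange the stopping condition to prevent the vertex count from falling below, say, $2d$.

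The main obstacle I anticipate is precisely this bookkeeping: ensuring the average degree genuinely does not decay over a potentially long sequence of deletions, and ruling out the degenerate case where the process eats the whole graph. The clean way is to track the quantity "number of edges" against "number of vertices" and observe that the deletion rule removes edges at a rate strictly above the current edge-to-vertex ratio, so the ratio is monotone; combined with a lower bound on the vertex count coming from the fixed lower bound $\delta(H)\geq d/\lambda$ (which is preserved if we only ever delete high-degree vertices, since deleting a vertex can only lower a neighbour's degree—hmm, that's a genuine subtlety: deleting high-degree vertices can drop the minimum degree). To fix this I would run a two-phase argument or interleave the high-degree deletions with a "clean-up" that also removes any vertex dropping below $d/2\lambda$ in degree, and argue via a discharging/potential-function count (à la Pyber) that the total number of edges removed by clean-up steps is a small fraction of the total, so the average degree of the final graph is still $\Omega_\lambda(d)$ with maximum degree at most $(1+\eps)$ times its average. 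Once $H$ is in hand, Theorem \ref{thm:AFK} finishes the job with $c=c(\lambda)$ absorbing all the constants.
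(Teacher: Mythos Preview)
Your plan has two fatal gaps. First, the target $\Delta(H)\leq (1+\eps)d_H$ is too weak to feed into the Alon--Friedland--Kalai machinery. With $\Delta(H)=(1+\eps)d_H$ and $q\approx d_H/2$, the right-hand side in Theorem~\ref{thm:AFK} is $\frac{2q-2}{2q-1}(\Delta(H)+1)\approx (1-1/d_H)(1+\eps)d_H$, which exceeds $d_H$ as soon as $d_H>1/\eps$; so the hypothesis fails for large $d_H$. Equivalently, Corollary~\ref{cor:AFK} with additive slack $\eps d_H$ only yields $r\geq 1/(10\eps)$, a constant independent of $d$. To extract $r=\Omega(d_H)$ you need the \emph{additive} gap $\Delta(H)-d_H$ to be $O(1)$, not $\eps d_H$.

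Second, the greedy deletion does not even achieve the weak target. Removing a vertex of degree $k$ changes the average degree from $2e/m$ to $2(e-k)/(m-1)$, which is \emph{smaller} whenever $k>d_{\mathrm{avg}}/2$; so high-degree deletions always decrease the average, and your displayed inequality $\frac{m-1-\eps}{m-1}\geq 1$ is simply false. Concretely, run your process on $G=K_{d,\lambda d}$ (which satisfies $\Delta\leq\lambda\cdot d(G)$): the $d$ vertices on the small side each have degree $\lambda d$ and remain the unique high-degree vertices throughout, so you delete them one by one until the small side is empty and only isolated vertices remain. The proposed clean-up does not help, since large-side degrees stay above $d/(2\lambda)$ until almost the end, and then clean-up wipes out the rest. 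The paper's route is quite different: Theorem~\ref{thm:constant diff} produces $H$ with $\Delta(H)\leq d_H+10^8$ via an iterated \emph{random} process (Lemma~\ref{lem:one step with aux}) that deletes edges incident to high-degree vertices and deletes low-degree \emph{vertices}, calibrated so that high-degree vertices keep a $(1-\eps)^2$ fraction of their edges while low-degree vertices keep a $(1-\eps)$ fraction; a small correction function absorbs concentration outliers, allowing iteration until the degree window has width $O(1)$. Then Corollary~\ref{cor:AFK} gives an $s$-regular subgraph with $s=\Omega_\lambda(d)$, and a preliminary pass to a bipartite subgraph plus Hall's theorem delivers all $r\leq s$.
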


This also strengthens a result of Janzer, Sudakov and Tomon \cite{JST22+}, which states that every $n$-vertex $\lambda$-almost-regular graph with average degree $d$ contains an $r$-regular subgraph for all $r\leq c(\lambda)d/\log n$.

Observe that Theorem \ref{thm:regular in almost regular}, combined with Proposition \ref{prop:almostreg sharp}, implies Theorem \ref{thm:reg} (see Section \ref{sec:almost-regular} for a formal proof). Theorem \ref{thm:regular in almost regular} will also play a key role in our proof of Theorem \ref{thm:general ES}.

Furthermore, we can use Theorem \ref{thm:regular in almost regular} to obtain a strengthening of the celebrated Erd\H os--Simonovits regularization lemma mentioned above. Before we state our result, we recall a slight variant of the Erd\H os--Simonovits regularization lemma that is more convenient in applications. This variant is due to Conlon, Janzer and Lee (for an earlier variant, see Jiang and Seiver~\cite{jiang2012turan}).

\begin{lemma}[Conlon--Janzer--Lee \cite{conlon2021more}]
    Let $\eps,c$ be positive reals, where $\eps < 1$. Let $n$ be a positive integer that is sufficiently large as a function of $\eps$ and $c$. Let $G$ be a graph on $n$ vertices with $e(G) \geq cn^{1+\eps}$. Then $G$ contains a $K$-almost-regular subgraph $G'$ on $q \geq n^{\frac{\eps-\eps^2}{4+4\eps}}$ vertices such that $e(G') \geq \frac{2c}{5}q^{1+\eps}$ and $K = 20\cdot 2^{\frac{1}{\eps^2}+1}$.
\end{lemma}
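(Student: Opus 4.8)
This is the Erd\H os--Simonovits regularization lemma in the convenient quantitative form of Conlon, Janzer and Lee, and in this paper it is needed only as an input --- once it supplies an almost-regular, suitably dense subgraph, \Cref{thm:regular in almost regular} converts that into a genuinely regular subgraph --- so in practice I would simply cite it from \cite{conlon2021more}. For completeness, here is how I would prove it, by the classical iterative ``cleaning'' argument; call a graph $F$ \emph{$\eps$-dense with parameter $a$} if $e(F)\ge a\,v(F)^{1+\eps}$. \textbf{Phase 1 (minimum degree).} First I would pass to a subgraph $G_1\subseteq G$ on $m_1$ vertices with $\delta(G_1)\ge c\,m_1^{\eps}$ and $e(G_1)\ge c\,m_1^{1+\eps}$: take $G_1$ to be a subgraph with as few vertices as possible subject to $e(G_1)/v(G_1)\ge e(G)/n\ \bigl(\ge c n^{\eps}\bigr)$. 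Minimality forces every vertex of $G_1$ to have degree at least $e(G)/n\ge c n^{\eps}\ge c\,m_1^{\eps}$, and hence $e(G_1)\ge (c n^{\eps})\,m_1\ge c\,m_1^{1+\eps}$ since $m_1\le n$; the same step applied to any $\eps$-dense graph restores a minimum-degree bound matching its parameter.

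\textbf{Phase 2 (maximum degree).} Now I would iterate, maintaining a subgraph $H$ on $m$ vertices that is $\eps$-dense with some parameter $a$ and has $\delta(H)\ge a\,m^{\eps}$; since $\Delta(H)\le m$, its near-regularity ratio is at most $a^{-1}m^{1-\eps}$. If $\Delta(H)\le K\delta(H)$ then $H$ is $K$-almost-regular and, as $e(H)\ge\tfrac12\delta(H)\,m\ge\tfrac a2 m^{1+\eps}$, it is the desired $G'$ --- provided the accounting has kept $m\ge n^{\frac{\eps-\eps^2}{4+4\eps}}$ and $a\ge\tfrac{4c}{5}$. Otherwise $\Delta(H)$ is much larger than $\delta(H)$, and I would split $V(H)$ into the dyadic degree classes $D_j=\{v:2^j\le d_H(v)<2^{j+1}\}$ and use $\sum_j |D_j|\,2^j=\Theta(e(H))$ to locate a degree threshold $\theta$ for which $H':=H[\{v:d_H(v)\ge\theta\}]$ is again $\eps$-dense and either (a) has a much larger density parameter than $H$, or (b) has maximum degree smaller by a fixed factor at the cost of only a fixed-factor loss in the density parameter; in case (b) one re-runs Phase 1 on $H'$ and continues. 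Each step either strictly increases the density parameter or strictly decreases the maximum degree, both confined to bounded ranges, so the process terminates; tracking the multiplicative losses in $m$ and in $a$ over the run yields $q\ge n^{\frac{\eps-\eps^2}{4+4\eps}}$, $e(G')\ge\frac{2c}{5}q^{1+\eps}$, and $K=20\cdot 2^{1/\eps^2+1}$.

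\textbf{Main difficulty.} All the delicacy sits in the Phase 2 accounting: engineering the threshold $\theta$ at each step so that the near-regularity ratio shrinks fast enough that only about $1/\eps^2$ steps are ever used (so $K$ depends on $\eps$, not $n$), while simultaneously the vertex count never drops below $n^{\mathrm{poly}(\eps)}$ and the density parameter decays by no more than the factor $2/5$ allowed by the statement. The remaining ingredients --- Phase 1 and the terminal almost-regularity check --- are routine.
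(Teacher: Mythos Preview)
You are right that the paper does not prove this lemma: it is quoted from \cite{conlon2021more} as a black box, exactly as you propose, so on that count your proposal and the paper agree.

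Your supplementary sketch is in the Erd\H os--Simonovits spirit, but there is a slip in Phase~2. You set $H':=H[\{v:d_H(v)\ge\theta\}]$ in both branches and then claim that in case~(b) this $H'$ has maximum degree smaller by a fixed factor --- but restricting to the \emph{high}-degree vertices does not reliably lower $\Delta$; the top-degree vertex is still there. The standard dichotomy keeps the high-degree set $S$ only in case~(a) (when $H[S]$ carries a large share of the edges, recurse there and the density parameter jumps); in case~(b) one instead passes to $H\setminus S$, which genuinely has maximum degree below $\theta$ at a controlled edge loss, and then re-runs Phase~1. Since the quantitative bookkeeping you flag as the ``main difficulty'' is precisely the content of the Conlon--Janzer--Lee argument, citing it --- as both you and the paper do --- is the right call.
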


Using Theorem \ref{thm:regular in almost regular}, we can replace the $K$-almost-regular subgraph $G'$ by a fully regular subgraph which has essentially the same properties, and the average degree drops by at most a constant factor that depends on $\eps$.

\begin{theorem} \label{thm:Erdos Simonovits strengthening}
    For any positive real $\eps<1$, there exists some $\beta>0$ such that the following holds. Let $c>0$ and let $n$ be a positive integer that is sufficiently large as a function of $\eps$ and $c$. Let $G$ be a graph on $n$ vertices with $e(G)\geq cn^{1+\eps}$. Then $G$ contains a regular subgraph $H$ on $m\geq n^{\beta}$ vertices such that $e(H)\geq \beta c m^{1+\eps}$.
\end{theorem}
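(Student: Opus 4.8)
The plan is to combine the Conlon--Janzer--Lee version of the Erd\H{o}s--Simonovits regularization lemma with Theorem \ref{thm:regular in almost regular}. First I would apply the Conlon--Janzer--Lee lemma to $G$ with the same $\eps$ and $c$: since $e(G)\geq cn^{1+\eps}$ and $n$ is large enough in terms of $\eps$ and $c$, this produces a $K$-almost-regular subgraph $G'$ on $q\geq n^{(\eps-\eps^2)/(4+4\eps)}$ vertices with $e(G')\geq \tfrac{2c}{5}q^{1+\eps}$, where $K=20\cdot 2^{1/\eps^2+1}$ depends only on $\eps$. In particular $G'$ has average degree $d=2e(G')/q\geq \tfrac{4c}{5}q^{\eps}$, and since $G'$ is $K$-almost-regular its maximum degree is at most $K$ times its minimum degree, hence at most $K$ times its average degree; so $G'$ satisfies the hypothesis of Theorem \ref{thm:regular in almost regular} with $\lambda=K$.

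Next I would feed $G'$ into Theorem \ref{thm:regular in almost regular} with $\lambda=K=K(\eps)$, obtaining a constant $c_0=c(K)>0$ (depending only on $\eps$) such that $G'$ contains an $r$-regular subgraph $H$ for every $r\leq c_0 d$. I would take $r=\lfloor c_0 d\rfloor$, so $H$ is $r$-regular with $r=\Theta_\eps(d)=\Theta_\eps(q^{\eps})$; to be careful I should note that $d\geq \tfrac{4c}{5}q^\eps$ is at least $1$ once $q$ (equivalently $n$) is large, so $r\geq 1$ and $H$ is nontrivial. Say $H$ has $m$ vertices; since $H$ is an $r$-regular subgraph of $G'$ we have $m\geq r+1$, and since $H\subseteq G'$ we also have $m\leq q$.

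It remains to check the two required bounds on $H$. For the vertex count: $m\geq r+1> c_0 d\geq \tfrac{4c c_0}{5}q^{\eps}$, and since $q\geq n^{(\eps-\eps^2)/(4+4\eps)}$ we get $m\geq \tfrac{4c c_0}{5} n^{\eps(\eps-\eps^2)/(4+4\eps)}$, which is at least $n^{\beta_1}$ for a suitable $\beta_1=\beta_1(\eps)>0$ once $n$ is large (the constant $\tfrac{4cc_0}{5}$ is absorbed by halving the exponent, using $n$ large in terms of $c$ and $\eps$). For the edge count: $H$ is $r$-regular on $m$ vertices with $m\leq q$, so
\[
e(H)=\frac{rm}{2}\geq \frac{(c_0 d-1)m}{2}\geq \frac{c_0 d}{4}\cdot m \geq \frac{c_0}{4}\cdot\frac{4c}{5}q^{\eps}\cdot m = \frac{c c_0}{5}\,q^{\eps} m\geq \frac{c c_0}{5}\,m^{1+\eps},
\]
where in the third step I used $d\geq 2/c_0$ (valid for $n$ large), and in the last step $q\geq m$. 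Setting $\beta=\min(\beta_1, c_0/5)$ gives both conclusions. The only genuinely substantive input here is Theorem \ref{thm:regular in almost regular}; the main thing to be careful about is the bookkeeping of which constants depend only on $\eps$ (namely $K$, $c_0$, and hence $\beta$) versus which require $n$ large in terms of both $\eps$ and $c$ (the polynomial lower bound on $m$), and the minor point that $r$ must be rounded to an integer while staying at least $1$ and at most $c_0 d$.
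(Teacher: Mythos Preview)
Your proposal is correct and follows exactly the approach the paper indicates: apply the Conlon--Janzer--Lee regularization lemma to get a $K$-almost-regular subgraph $G'$, then apply Theorem~\ref{thm:regular in almost regular} with $\lambda=K(\eps)$ to extract an $r$-regular subgraph $H$ with $r=\Theta_\eps(d(G'))$, and recover the vertex lower bound $m\geq n^{\beta}$ from the degree lower bound via $m\geq r+1$. The paper only sketches this in a couple of sentences (noting in particular that the bound on $m$ ``follows from the lower bound for the degrees of $H$''), and your write-up fills in the routine bookkeeping accurately.
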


Note that while Theorem \ref{thm:regular in almost regular} does not directly control the number of vertices in the regular subgraph, the lower bound $m=n^{\Omega_{\eps}(1)}$ on the number of vertices in $H$ follows from the lower bound for the degrees of $H$.
We expect Theorem~\ref{thm:Erdos Simonovits strengthening} to become a significant tool in Tur\'an-type problems.

\subsection{Preliminaries and notation} \label{sec:prelim}

\textbf{Concentration.} We will often use the following basic version of Chernoff's inequality for binomial random variables (see, e.g., \cite{alon2016probabilistic}).
\begin{lemma}[Chernoff's bound]\label{chernoff}
Let $n$ be an integer and $0\le \delta,p \le 1$. If $X \sim \bin(n,p),$ then, setting $\mu=\mathbb{E} [X]= np,$ we have
$$\P(X>(1+\delta) \mu) \le e^{-\delta^2\mu/2},\quad\quad\quad \text{ and }\quad\quad\quad \P(X<(1-\delta) \mu) \le e^{-\delta^2\mu/3}.$$
\end{lemma}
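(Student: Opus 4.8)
The plan is to prove both tail bounds by the classical exponential moment (Chernoff) method: apply Markov's inequality to $e^{tX}$ for a well-chosen real parameter $t$, and then optimise over $t$. As a first step I would write the binomial $X=\sum_{i=1}^{n}X_i$ as a sum of independent $\mathrm{Bernoulli}(p)$ indicators, so that by independence, for every real $t$,
$$\mathbb{E}\bigl[e^{tX}\bigr]=\prod_{i=1}^{n}\mathbb{E}\bigl[e^{tX_i}\bigr]=\bigl(1-p+pe^{t}\bigr)^{n}=\bigl(1+p(e^{t}-1)\bigr)^{n}\le e^{np(e^{t}-1)}=e^{\mu(e^{t}-1)},$$
where the inequality is the elementary bound $1+x\le e^{x}$.

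For the upper tail, fix $t>0$. Since $e^{tX}$ is nonnegative, Markov's inequality gives $\mathbb{P}(X>(1+\delta)\mu)\le e^{-t(1+\delta)\mu}\,\mathbb{E}[e^{tX}]\le \exp\!\bigl(\mu(e^{t}-1-t(1+\delta))\bigr)$, and the exponent is minimised at $t=\ln(1+\delta)\ge 0$, which yields
$$\mathbb{P}\bigl(X>(1+\delta)\mu\bigr)\le \exp\!\Bigl(-\mu\bigl((1+\delta)\ln(1+\delta)-\delta\bigr)\Bigr).$$
The lower tail is symmetric: taking $t=-u$ with $u>0$, applying Markov to $e^{-uX}$, and choosing $u=-\ln(1-\delta)\ge 0$ gives
$$\mathbb{P}\bigl(X<(1-\delta)\mu\bigr)\le \exp\!\Bigl(-\mu\bigl((1-\delta)\ln(1-\delta)+\delta\bigr)\Bigr)$$
(if $\delta=1$ the left-hand side is $0$, so we may assume $\delta<1$). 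It then remains to bound the two exponents from below. Inserting the Taylor series $\ln(1+\delta)=\sum_{k\ge 1}\tfrac{(-1)^{k-1}}{k}\delta^{k}$ and $\ln(1-\delta)=-\sum_{k\ge 1}\tfrac{\delta^{k}}{k}$ gives the identities $(1+\delta)\ln(1+\delta)-\delta=\sum_{k\ge 2}\tfrac{(-1)^{k}}{k(k-1)}\delta^{k}$ and $(1-\delta)\ln(1-\delta)+\delta=\sum_{k\ge 2}\tfrac{\delta^{k}}{k(k-1)}$; on $[0,1]$ each of these is at least an absolute positive constant times $\delta^{2}$ (the second series has only positive coefficients, hence is at least $\tfrac{\delta^{2}}{2}$; for the first, grouping consecutive terms shows it is at least a constant multiple of $\delta^{2}$). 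Feeding these into the two displays above yields tail bounds of exactly the stated form $e^{-c\delta^{2}\mu}$.

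This is the textbook Chernoff estimate and presents no genuine obstacle; the only mildly delicate point is the final step, namely checking that $(1+\delta)\ln(1+\delta)-\delta$ and $(1-\delta)\ln(1-\delta)+\delta$ are both $\Theta(\delta^{2})$ on $[0,1]$ with the right constants. Besides the Taylor-coefficient argument above, this can be done by observing that each of these functions vanishes together with its first derivative at $\delta=0$ while its second derivative has constant sign on $(0,1)$, and then applying the mean value theorem twice.
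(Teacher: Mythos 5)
Your proof is the standard exponential-moment argument (Markov applied to $e^{tX}$, then optimise over $t$); the paper does not prove this lemma but cites Alon--Spencer, which uses exactly this method, so the approach is right. The gap is in the step you yourself flag as ``mildly delicate'': the lemma asserts the specific constants $1/2$ and $1/3$, but you only argue that $(1+\delta)\ln(1+\delta)-\delta$ and $(1-\delta)\ln(1-\delta)+\delta$ are each bounded below by \emph{some} absolute constant times $\delta^2$ on $[0,1]$. That does not establish the stated inequalities.

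If you pin the constants down via your series, they come out \emph{swapped} relative to the paper's statement. The lower-tail series $\sum_{k\ge 2}\delta^k/(k(k-1))$ is termwise nonnegative with $k=2$ term $\delta^2/2$, giving $\P(X<(1-\delta)\mu)\le e^{-\delta^2\mu/2}$. The upper-tail series is alternating; pairing consecutive terms yields $(1+\delta)\ln(1+\delta)-\delta\ge \delta^2/2-\delta^3/6\ge \delta^2/3$ for $\delta\le 1$, and $1/3$ cannot be upgraded to $1/2$: already at $\delta=1/5$ the rate function equals $1.2\ln(1.2)-0.2\approx 0.0188<0.02=\delta^2/2$, and at $\delta=1$ it is $2\ln 2-1\approx 0.386<1/2$. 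Since the moment bound is tight up to polynomial factors, $\P(X>(1+\delta)\mu)\le e^{-\delta^2\mu/2}$ genuinely fails for fixed $\delta\in(0,1)$ once $\mu$ is large. So the lemma as printed transposes $1/2$ and $1/3$; what your method proves, and what Alon--Spencer states, is $\P(X>(1+\delta)\mu)\le e^{-\delta^2\mu/3}$ and $\P(X<(1-\delta)\mu)\le e^{-\delta^2\mu/2}$. (The paper's second invocation of this lemma, in the proof of Lemma~\ref{lem:one step with aux}, uses the upper tail with $\delta=1/5$ and does rely on the $1/2$; with the corrected constant the exponent $-(t+\eps d)/60$ there should be weakened to about $-(t+\eps d)/90$, which the ample slack $\eps d\ge 10^6$ absorbs.) In short: carry the constants through to the end, and when you do, note the transposition in the statement.
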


\textbf{Notation.} We use standard graph theoretic notation throughout the paper. In particular, for a graph or hypergraph~$G$, we denote by $d(G)$ its average degree, and by $\delta(G)$ and $\Delta(G)$ its minimum degree and maximum degree, respectively. We write $|G|$ and $e(G)$ for the number of vertices and edges in the graph $G$, respectively. All graphs are assumed to have a non-empty vertex set. Logarithms are to base $2$. For the sake of a cleaner presentation, we sometimes omit floor and ceiling 
signs when they are not crucial.

\textbf{Organization of the paper.} In Section \ref{sec:nearreg}, we prove Theorem \ref{thm:regular in almost regular}. In Section \ref{sec:tight ES}, we prove Theorem \ref{thm:general ES}. Proposition~\ref{prop:almostreg sharp} and Theorem \ref{thm:reg} are proved in Section \ref{sec:almost-regular}. The proofs of Propositions~\ref{prop:constructiondsmall} and \ref{prop:lower bound d large} are given in Section \ref{sec:lower bounds}.  In Section~\ref{sec:concluding remarks}, we give some concluding remarks and mention some open problems.

\section{Regular subgraphs in almost-regular graphs} \label{sec:nearreg}

In this section we prove Theorem \ref{thm:regular in almost regular}.
Our main novel contribution is the following `near-regularisation' result.

\begin{theorem} \label{thm:constant diff}
    There is an absolute constant $c>0$ such that the following holds for all $\lambda\geq 1$ and $d> 0$. Let $G$ be a graph in which $d\leq d_G(v)\leq \lambda d$ for each $v\in V(G)$. Then, for some $d'\geq cd/\lambda^3$, $G$ contains a subgraph $H$ with average degree $d'$ and maximum degree at most $d'+10^8$ such that $|H|\geq c|G|/\lambda^{10}$.
\end{theorem}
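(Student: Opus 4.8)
The goal is: given a graph $G$ with all degrees between $d$ and $\lambda d$, find a subgraph $H$ whose average degree $d'$ is $\Omega(d/\lambda^3)$ and whose *maximum* degree is only a constant ($10^8$) above $d'$, while keeping $|H| = \Omega(|G|/\lambda^{10})$. So we need to massively compress the degree spread (from a multiplicative factor $\lambda$ to an additive constant) while losing only polynomially in $\lambda$ in both the degree and the vertex count.

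Let me think about what tools are available. Corollary \ref{cor:AFK} (Alon–Friedland–Kalai) gives regular subgraphs when max degree $\le d + \lambda$, i.e., additive gap. Theorem \ref{thm:regular in almost regular} is what we're building toward (so can't use it). The key phrase in the abstract is "a novel random process that efficiently finds a very nearly regular subgraph in any almost-regular graph." So Theorem \ref{thm:constant diff} must be proved by a random process, not by the existing machinery.

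**The random process.**

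Here's the natural idea. Vertices of high degree are the problem. Suppose we repeatedly delete edges incident to high-degree vertices to bring them down, but we must not destroy too many vertices or reduce the average degree too much. A clean way: partition vertices into "levels" by degree — say vertex $v$ is at level $i$ if $d_G(v) \in [2^i, 2^{i+1})$, giving roughly $\log \lambda$ levels.

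I'd try a *random edge-deletion* or *random orientation* process. Consider the following: for a target value $t$ (to be chosen near $d$), I want every vertex to have degree close to $t$ in the subgraph. For each vertex $v$, since $d_G(v) \ge d \ge t$, I can try to "keep" exactly $\approx t$ of its incident edges. The difficulty is consistency: an edge is kept only if both endpoints want to keep it.

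A workable approach: give each vertex $v$ an independent uniform random "threshold" or keep each *edge* independently with probability $p = $ (something like $c/\lambda$), then the expected degree of $v$ becomes $p\, d_G(v) \in [pd, p\lambda d]$ — still a factor $\lambda$ spread, which doesn't help directly. So uniform sparsification alone is useless; we need *vertex-dependent* sparsification. Keep an edge $uv$ with probability $p_u p_v$ where $p_v \approx t/d_G(v)$, so the expected degree of $v$ is roughly $p_v \cdot \sum_{u \sim v} p_u$. If all the $p_u$ were equal this would be $\approx t$, but they're not — a high-degree vertex adjacent to many low-degree (hence high-$p$) neighbors ends up with expected degree larger than $t$, perhaps by a factor up to $\lambda$ again. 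This is the crux: one round of independent sparsification cannot beat the neighborhood-degree-bias problem.

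**Iterating / the expected main obstacle.**

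So I expect the real proof to be *iterative*: run the random sparsification in $O(\log\log\lambda)$ or $O(\log\lambda)$ rounds, each round reducing the "degree ratio" from $K$ to roughly $\sqrt{K}$ or $K^{1-\Omega(1)}$, halting once the spread is an additive constant. At each round one must (i) control the concentration of the new degrees via Chernoff (Lemma \ref{chernoff}), which requires the degrees to still be large enough — so one should stop when degrees drop to a constant like $10^8$, explaining that exact threshold; (ii) throw away the $o(1)$ fraction of "bad" vertices whose degree deviated too much, and then delete edges at vertices whose degree dropped below the target to re-regularize from below — but deleting those edges can in turn push *their* neighbors below target, so one needs a potential/amortization argument (à la Pyber's "if few edges are removed, few vertices are affected") to show this cascade terminates without killing the graph. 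Getting the bookkeeping so that the total multiplicative loss over all rounds is only $\lambda^{3}$ in degree and $\lambda^{10}$ in vertex count — rather than something like $\lambda^{\log\lambda}$ — is the delicate part, and is presumably where the precise exponents $3$ and $10$ come from.

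**Summary of the proposed plan.**

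\begin{enumerate}
\item Reduce to making the graph nearly-regular from above: first pass to a subgraph where the minimum degree is within a constant factor of the average, using a standard max-density subgraph / greedy deletion argument, so that "average degree $\approx d$" and "all degrees in $[d, \lambda d]$" are essentially interchangeable.
\item Set a target degree $t$, initially $t = \Theta(d/\lambda^{O(1)})$, and a current bound $K$ on the ratio $\Delta/\delta$, initially $\lambda$.
\item \textbf{(Main loop.)} While $K$ is larger than an absolute constant: sample a subgraph by keeping each edge $uv$ independently with probability depending on (a truncation of) $1/d(u)$ and $1/d(v)$; use Chernoff to show that all but an $o(1)$-fraction of vertices have new degree concentrated around its expectation, which now lies in a window of multiplicative width $\approx \sqrt K$ (or $K^{0.9}$); delete the exceptional vertices and, via an amortized deletion argument bounding the cascade, restore a lower bound on the minimum degree. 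Update $K \leftarrow \sqrt K$ (or so), $t \leftarrow t/(\text{const})$.
\item When $K$ is an absolute constant but the gap is still multiplicative, do one final clean-up: delete a bounded number of edges at each too-high-degree vertex to bring the maximum degree to within an additive $10^8$ of the average, again controlling the cascade so that $|H| \ge c|G|/\lambda^{10}$ and $d' \ge cd/\lambda^3$.
\item Verify the accounting: the number of loop iterations is $O(\log\log\lambda)$ (if the ratio square-roots each time) or $O(\log\lambda)$, and in either case the product of the per-round constant losses, together with the polynomial losses in steps 1 and 4, stays within $\lambda^3$ for the degree and $\lambda^{10}$ for the vertex count.
\end{enumerate}

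The step I expect to be the main obstacle is step 3 (and its interaction with step 4): proving that a single round of vertex-biased random sparsification genuinely shrinks the degree ratio — this is false in the worst case unless one is careful about how the keep-probabilities interact with the *local* degree distribution — and then proving that the re-regularization-from-below cascade does not snowball. The honest fix is likely to interleave the sparsification with a layered/bucketing argument on the neighborhood degree profile, so that one only ever sparsifies edges between vertices in comparable degree buckets; making that precise while keeping the losses polynomial in $\lambda$ is where the real work lies.
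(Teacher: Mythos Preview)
Your overall architecture---iterative random sparsification, Chernoff concentration, geometric shrinkage of the degree ratio, final clean-up---matches the paper's. But two concrete ideas are missing, and without them the plan as stated does not close.

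\textbf{The right random step.} You correctly note that keeping edge $uv$ with probability $p_u p_v$, $p_v \approx t/d(v)$, fails because a vertex's new expected degree depends on its neighbours' $p$-values. The paper's fix is not many buckets but exactly two: split $V(G)$ into $U_L$ (low) and $U_H$ (high) by a threshold, and then (i) delete each edge inside $U_H$ with probability $2\eps-\eps^2$, (ii) delete each $U_L$--$U_H$ edge with probability $\eps$, (iii) delete each \emph{vertex} in $U_L$ with probability $\eps$. The point: for $v\in U_H$, every incident edge survives with probability exactly $(1-\eps)^2$ \emph{regardless} of where the other endpoint lies (either one deletion at rate $2\eps-\eps^2$, or two independent deletions each at rate $\eps$); for a surviving $v\in U_L$, each incident edge survives with probability exactly $1-\eps$. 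So the new degree is genuinely binomial with parameter depending only on $v$'s side---the neighbourhood bias is designed away, and the ratio $\Delta/d$ shrinks by a fixed factor $1-\eps/2$ per round.

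\textbf{Correction functions instead of re-regularising from below.} The cascade you flag in step~3 is real, and the paper remarks that controlling minimum and maximum degree directly (the approach of \cite{chakraborti2024edge}) only gets the gap down to $O(\log d)$, not $O(1)$. The paper sidesteps the cascade entirely: instead of enforcing $d_G(v)\in[d,\Delta]$, it enforces $d_G(v)+f(v)\in[d,\Delta]$ for a nonnegative ``correction'' $f$ with small $\sum_v f(v)$. When a vertex's degree drops below target after a round, one does \emph{not} delete anything---one just increases $f(v)$. The total correction grows by at most $|G|\exp(-\Omega(\eps d))$ per round, so at termination $\sum f \le |H|$ and the average degree is within $1$ of the lower threshold. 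With these two ingredients the accounting is clean: $\prod_i(1-\eps_i/2)\ge 1/\lambda$, whence $\prod_i(1-\tfrac54\eps_i)\ge\prod_i(1-\eps_i/2)^3\ge 1/\lambda^3$ and $\prod_i(1-4\eps_i)\ge\prod_i(1-\eps_i/2)^{10}\ge 1/\lambda^{10}$, which is where the exponents come from.
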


Theorem \ref{thm:constant diff} states that in an almost-regular graph we can find a subgraph with average degree only a constant factor smaller than the average degree of the original graph that is extremely close to being regular. We can then apply the algebraic regularization technique of Alon, Friedland and Kalai (Corollary \ref{cor:AFK}) to find a fully-regular subgraph. We note that while we do not need this for the proof of Theorem \ref{thm:regular in almost regular}, the nearly-regular subgraph given by Theorem \ref{thm:constant diff} has a large vertex set, which may be useful for other applications.

 The proof of Theorem \ref{thm:constant diff} builds on the ideas recently introduced by the current authors, in \cite{chakraborti2024edge}. The key idea is to take an almost-regular graph and show that a small random deletion of vertices and edges (with probability differing based on whether the vertices involved are low- or high-degree vertices) is likely to produce a subgraph which is slightly closer to regular and yet has a very similar average degree, before using this iteratively. This will allow us to obtain the desired nearly-regular subgraph in $G$ by finding a sequence $G=:G_0\supset G_1\supset G_2\supset \dots$ of subgraphs such that each $G_{i+1}$ is a `bit more regular' than $G_i$ but has similar average degree. We terminate the process once $G_i$ is sufficiently regular and let $H$ be this final subgraph. In \cite{chakraborti2024edge}, we controlled the minimum degree and the maximum degree of each $G_i$, and using those methods one can show (via the Lov\'asz local lemma) that with positive probability we end up with a graph $G_i$ in which the minimum degree and the maximum degree differ by at most $O(\log d)$. This, however, would not be sufficient to prove Theorem \ref{thm:regular in almost regular}, so here we take a slightly different approach. One important observation is that in order to apply the algebraic regularization technique of Alon, Friedland and Kalai, one does not need to control the minimum degree; controlling the average degree and the maximum degree is sufficient. This can be achieved by repeating the following process. In each step, we first apply the small random regularization step from \cite{chakraborti2024edge}. When the average degree and the maximum degree are already very close (closer than $\log d$, say), this leaves some outlier `high-degree' vertices, but not too many -- we can therefore delete some edges incident to these vertices to `push down' their degrees to the interval we want.

The exact implementation of our small regularization step (i.e.\ how we obtain $G_{i+1}$ from $G_i$) is given by the following lemma. After stating the lemma we will motivate the use of the `correction' functions $f$ and $f''$ and comment further on the narrowing interval in which the (corrected) degrees lie.

\begin{lemma} \label{lem:one step with aux}
    Let $\eps\leq 1/100$ such that $\eps d\geq 10^6$ and $\Delta\geq (1+10\eps)d$. Let $f:V(G)\rightarrow \mathbb{Z}_{\geq 0}$ be a function. Assume that $G$ is a graph such that $d\leq d_G(v)+f(v)\leq \Delta$ for every $v\in V(G)$. Then there exist some subgraph $G''\subset G$ and some function $f'':V(G'')\rightarrow \mathbb{Z}_{\geq 0}$ such that 
    \begin{itemize}
        \item $|G''|\geq (1-4\eps)|G|$,
        \item $\sum_{v\in V(G'')} f''(v)\leq \sum_{v\in V(G)} f(v)+|G|\exp(-\eps d/1000)$ and
        \item $(1-\frac{5}{4}\eps)d\leq d_{G''}(v)+f''(v)\leq (1-\frac{7}{4}\eps)\Delta$ for every $v\in V(G'')$.
    \end{itemize}
\end{lemma}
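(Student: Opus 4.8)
Following the approach of \cite{chakraborti2024edge}, I would produce $G''$ by deleting a carefully chosen random set of vertices (plus, if needed, a small deterministic set of edges at the end), and define $f''$ so as to record exactly how many incident edges each surviving vertex lost beyond what was planned, so that the new lower threshold is respected for free. Write $g(v):=d_G(v)+f(v)\in[d,\Delta]$; the goal is to push every $g(v)$ into the narrower window $[(1-\tfrac54\eps)d,(1-\tfrac74\eps)\Delta]$. Vertices with atypically large $f(v)$ can be dealt with first, simply by lowering $f''(v)$ (which only helps the bound on $\sum f''$), so I would assume $f(v)$ is a small fraction of $g(v)$ everywhere; then $d_G(v)$ is comparable to $g(v)$ and the graph is genuinely near-regular.

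Now pick, for each $v$, a deletion probability $\rho(v)\in[0,1]$, keep each vertex independently with probability $1-\rho(v)$, and let $G''$ be the induced subgraph. For a surviving $v$, the number of deleted neighbours is a sum of independent indicators with mean $\mu(v):=\sum_{u\sim v}\rho(u)$, so by \Cref{chernoff} it concentrates around $\mu(v)$ up to a $(1\pm c\eps)$ factor, with failure probability $\exp(-\Omega(\eps g(v)))\le\exp(-\Omega(\eps d))$. This is where the correction functions do their job: when $v$ loses more edges than planned, we raise $f''(v)$ to absorb the surplus, guaranteeing $d_{G''}(v)+f''(v)\ge(1-\tfrac54\eps)d$, and by linearity of expectation the expected total surplus is $\le\tfrac12|G|\exp(-\eps d/1000)$ (this is where the constant $1000$ leaves room). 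Since also $\E[\#\text{deleted vertices}]=\sum_v\rho(v)$, which we will keep below $2\eps|G|$, a Markov argument gives an outcome with at most $4\eps|G|$ vertices deleted and total surplus at most $\tfrac12|G|\exp(-\eps d/1000)$. For the upper bound note $\lceil(1-\tfrac54\eps)d\rceil\le(1-\tfrac74\eps)\Delta$, so it suffices to force $d_{G''}(v)\le(1-\tfrac74\eps)\Delta$; by planning $\mu(v)$ to overshoot the required drop by a margin $\Theta(\eps g(v))$, the few vertices for which this still fails can have their excess edges deleted deterministically at a cost to $\sum f''$ whose expectation is again $\le\tfrac12|G|\exp(-\eps d/1000)$ (the failure probability $\exp(-\Omega(\eps g(v)))$ decays fast enough to beat the at-most-$g(v)$ excess).

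The real work, and the step I expect to be the main obstacle, is choosing the $\rho(v)$ so that $\mu(v)=\sum_{u\sim v}\rho(u)$ simultaneously lands in the correct range for \emph{every} vertex: at least $g(v)-(1-\tfrac74\eps)\Delta+\Theta(\eps g(v))$ when $g(v)>(1-\tfrac74\eps)\Delta$, but at most about $\tfrac12\bigl(g(v)-(1-\tfrac54\eps)d\bigr)$ always, while keeping $\sum_v\rho(v)\le 2\eps|G|$. These pull in opposite directions precisely when a low-$g$ vertex is adjacent to high-$g$ vertices --- as in an unbalanced complete bipartite graph, where the high side must shrink and can only do so by deleting its (entirely low) neighbours --- and no probability depending only on $g(v)$ works in general. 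I would formulate this as a feasibility problem ($\ell\le A\rho\le s$, $\rho\ge 0$, with $\mathbf 1^\top\rho$ small and $A$ the adjacency matrix) and prove feasibility by an iterative ``water-filling'' construction: repeatedly take a vertex that still needs more of its neighbourhood deleted and raise $\rho$ on whichever of its neighbours has the most remaining slack, using near-regularity ($d\le d_G(v)\le\Delta$) to show the process terminates while respecting every upper bound and the budget; the quantitative heart is that deleting one vertex of degree $\approx\Delta$ relieves the demand of up to $\Delta$ vertices at once, so the total number of deletions needed is only an $O(\eps)$ fraction of $|G|$. Once such a $\rho$ (and the auxiliary deterministic edge deletions) is constructed and shown to leave $\Theta(\eps g(v))$ of slack at every vertex, the concentration and bookkeeping above finish the proof.
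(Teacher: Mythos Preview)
Your high-level framework --- random sparsification, absorb the lower-tail error into the new correction function $f''$, fix the upper-tail error by a small deterministic edge-deletion, and certify an outcome by Markov --- is exactly what the paper does. But you have made the random step substantially harder than it needs to be by insisting on pure vertex deletion and then setting yourself a per-vertex feasibility problem $\ell\le A\rho\le s$ that you do not actually solve. The ``water-filling'' paragraph is a heuristic, not a proof: the sentence ``deleting one vertex of degree $\approx\Delta$ relieves the demand of up to $\Delta$ vertices at once'' is already off (the vertices you are forced to delete may well have degree near $d$, not $\Delta$), and you give no argument that the process can be carried out while simultaneously respecting the upper constraint at \emph{every} low-$g$ vertex and the global budget $\sum_v\rho(v)\le 2\eps|G|$.

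The paper sidesteps this entire feasibility issue with one clean idea: it does not restrict to vertex deletion. Split $V(G)$ into $U_L=\{v:g(v)\le(1+\gamma/2)d\}$ and $U_H=V(G)\setminus U_L$ (where $\Delta=(1+\gamma)d$), and then
\begin{itemize}
\item delete each edge inside $U_H$ with probability $2\eps-\eps^2$,
\item delete each $U_L$--$U_H$ edge with probability $\eps$,
\item delete each vertex of $U_L$ with probability $\eps$.
\end{itemize}
The point of these particular probabilities is that, from the perspective of any fixed $v$, \emph{every} incident edge survives with the same probability: $(1-\eps)$ if $v\in U_L$ (conditional on $v$ surviving) and $(1-\eps)^2$ if $v\in U_H$, regardless of which class the other endpoint lies in. So there is no coupling between the demands of different vertices and no feasibility problem to solve; the class of $v$ alone determines its thinning rate. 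Moreover, the paper thins $f$ at the same rate --- define $f'(v)\sim\bin(f(v),1-\eps)$ on $U_L$ and $\bin(f(v),(1-\eps)^2)$ on $U_H$ --- so that $d_{G'}(v)+f'(v)$ is \emph{exactly} $\bin(g(v),1-\eps)$ or $\bin(g(v),(1-\eps)^2)$; this removes any need for your separate ``first deal with large $f(v)$'' step. After this, your own endgame (Chernoff for the tails, Markov to certify one outcome, deterministic edge-deletion at the few remaining high-degree vertices, bookkeeping of the surplus into $f''$) goes through verbatim. Replace your water-filling step with this mixed edge/vertex deletion and the proof closes.
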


Compare the intervals in which $d_G(v)+f(v)$ and $d_{G''}(v)+f''(v)$ are bounded for each vertex $v$: as $(1-\frac{7}{4}\eps)/(1-\frac{5}{4}\eps)\leq (1-\eps/2)$, the ratio between the upper and lower bounds is decreasing. We think of the non-negative function $f$ as a small `correction' term such that all `corrected degrees' $d_G(v)+f(v)$ are between $d$ and $\Delta$. Note that if $\sum_{v\in V(G)}f(v)$ is small, then the average degree of such a graph $G$ cannot be much smaller than $d$, while the maximum degree of $G$ is at most $\Delta$. The lemma states that, at the cost of increasing $f(v)$ very slightly, we can find a subgraph for which this lower bound $d$ and upper bound $\Delta$ are closer to each other. This corresponds to a subgraph in which the average degree and the maximum degree are closer than they were in the original graph. Crucially, unlike the corresponding result in \cite{chakraborti2024edge}, this lemma can be iterated until the average degree and the maximum degree differ by an absolute constant.

\begin{proof}[ of Lemma \ref{lem:one step with aux}] Let $\gamma$ be such that $\Delta=(1+\gamma)d$, and note that $\gamma\geq 10\eps$. Let 
\[
U_L=\{v\in V(G):d_G(v)+f(v)\leq (1+\gamma/2)d\}\quad\quad \text{ and} \quad\quad U_H=\{v\in V(G):d_G(v)+f(v)> (1+\gamma/2)d\}.
\]

Let $G'$ be the random subgraph of $G$ obtained by
\begin{enumerate}[label=(\roman*)]
    \item  deleting edges within $U_H$ independently at random with probability $2\eps-\eps^2$, \label{item:between dense}
    \item deleting edges from $U_{L}$ to $U_{H}$ independently at random with probability $\eps$, and \label{item:sparse and dense}
    \item deleting vertices in $U_{L}$  independently at random with probability $\eps$. \label{item:delete vertices}
\end{enumerate}

Furthermore, if some $v\in U_L$ belongs to $V(G')$, let $f'(v)$ be a random variable distributed as $\textrm{Bin}(f(v),1-\eps)$, and for $v\in U_H$, let $f'(v)$ be a random variable distributed as $\textrm{Bin}(f(v),(1-\eps)^2)$.

Now note that if $v\in U_L$ and $uv\in E(G)$, then, conditional on $v\in V(G')$, the probability that $uv\in E(G')$ is precisely $1-\eps$. Indeed, if $u\in U_L$, then (conditional on $v\in V(G')$) $uv\in E(G')$ holds if and only if $u$ did not get deleted by \ref{item:delete vertices}, which has probability $1-\eps$, and if $u\in U_H$, then (conditional on $v\in V(G')$) $uv\in E(G')$ holds if and only if the edge $uv$ did not get deleted by \ref{item:sparse and dense}, which has probability $1-\eps$.

Hence, if $v\in U_{L}$, then, conditional on $v\in V(G')$, we have 
\begin{equation}
    d_{G'}(v)+f'(v)\sim \textrm{Bin}(d_G(v)+f(v),1-\eps), \label{eqn:distr UL}
\end{equation}
while if $v\in U_{H}$, then, for each $u\in N_G(v)$, whether or not $u$ is in $U_L$, the probability $uv\in E(G')$ is $(1-\eps)^2$, and thus 
\begin{equation}
    d_{G'}(v)+f'(v)\sim \textrm{Bin}(d_G(v)+f(v),(1-\eps)^2). \label{eqn:distr UH}
\end{equation}

For each $v\in V(G')$, let $g(v)$ be the smallest non-negative integer such that $d_{G'}(v)+f'(v)+g(v)\geq (1-\frac{5}{4}\eps)d$.
We now show that with positive probability we have

\begin{enumerate}[label=\textbf{(\alph*)}]
    \item $|G'|\geq (1-4\eps)|G|$,\label{prop1}
    \item \label{property:few large} the number of vertices $v\in V(G')$ with $d_{G'}(v)> (1-\frac{7}{4}\eps)\Delta$ is at most $4|G|\exp(-\eps \Delta/500)$, and\label{prop2}
    \item \label{prop3}\begin{equation}
    \sum_{v\in V(G')} g(v)\leq 4|G|\exp(-\eps d/100). \label{eqn:bound g}
\end{equation}
\end{enumerate}

To see why \ref{prop1} holds, note that
the expected number of vertices deleted by \ref{item:delete vertices} is at most $\eps |G|$, so, by Markov's inequality, with probability at least~$3/4$, we have $|G'|\geq (1-4\eps)|G|$.

For \ref{prop2}, note that if $v\in U_L$ (and $v\in V(G')$), then 
\[
d_{G'}(v)+f'(v)\leq d_G(v)+f(v)\leq (1+\gamma/2)d<\left(1-\frac{7}{4}\eps\right)(1+\gamma)d= \left(1-\frac{7}{4}\eps\right)\Delta.
\]

Moreover, if $v\in U_H$, then by (\ref{eqn:distr UH})
\begin{align*}
    \mathbb{P}\left(d_{G'}(v)> \Big(1-\frac{7}{4}\eps\Big)\Delta \right)
    &\leq \mathbb{P}\left(d_{G'}(v)+f'(v)> \Big(1-\frac{7}{4}\eps\Big)\Delta \right) \\
    &\leq \mathbb{P}\left(\bin(\Delta,(1-\eps)^2)> \Big(1-\frac{7}{4}\eps\Big)\Delta\right) \\
    &= \mathbb{P}\left(\bin(\Delta,2\eps-\eps^2)< \frac{7}{4}\eps \Delta\right) \\
    &\leq \mathbb{P}\left(\bin\Big(\Delta,\frac{15}{8}\eps\Big)< \frac{7}{4}\eps \Delta\right)\leq e^{-(1/15)^2 \frac{15}{8} \eps \Delta/3}\leq \exp(-\eps \Delta/500),
\end{align*}
where the penultimate inequality follows from the Chernoff bound (Lemma~\ref{chernoff} applied with $\delta=1/15$), using that $\frac{7}{4}\eps \Delta=(1-\frac{1}{15})\cdot \mathbb{E}[\bin(\Delta,\frac{15}{8}\eps)]$.
Hence, by Markov's inequality, with probability at least~$3/4$, the number of vertices $v\in V(G')$ with $d_{G'}(v)> (1-\frac{7}{4}\eps)\Delta$ is at most $4|G|\exp(-\eps \Delta/500)$.

For \ref{prop3}, note that, if $v\in U_L$, then, by (\ref{eqn:distr UL}), for any positive integer $t$,
\begin{align*}
    \mathbb{P}\left(d_{G'}(v)+f'(v)\leq \Big\lceil \Big(1-\frac{5}{4}\eps\Big)d \Big\rceil-t \hspace{1mm} \Big| \hspace{1mm} v\in V(G')\right)
    &\leq \mathbb{P}\left(\bin(d,1-\eps)\leq \Big\lceil \Big(1-\frac{5}{4}\eps\Big)d \Big\rceil-t\right) \\
    &= \mathbb{P}\left(\bin(d,\eps)\geq \Big\lfloor\frac{5}{4}\eps  d \Big\rfloor +t\right) \\
    &\leq \mathbb{P}\left(\bin\left(d+\frac{5}{6}\eps^{-1}t,\eps\right)> \frac{6}{5}\eps  d  +t\right) \\
    &\leq \exp\left(-(t+\eps d)/60\right),
\end{align*}
where the last inequality follows from the Chernoff bound applied with $\delta=1/5$.

A similar calculation, using (\ref{eqn:distr UH}), shows that, for each $v\in U_H$ and any positive integer $t$, we have
\begin{align*}
\mathbb{P}\left(d_{G'}(v)+f'(v)\leq \Big\lceil \Big(1-\frac{5}{4}\eps\Big)d \Big\rceil-t\right)&\leq \mathbb{P}\left(\textrm{Bin}\Big(\Big(1+\frac{1}{2}\gamma\Big)d,(1-\eps)^2\Big) \leq \Big\lceil \Big(1-\frac{5}{4}\eps\Big)d \Big\rceil-t\right)\\
&\leq \mathbb{P}\left(\textrm{Bin}\Big( (1+5\eps)d,(1-\eps)^2\Big) \leq \Big\lceil \Big(1-\frac{5}{4}\eps\Big)d \Big\rceil-t\right)\\
&\leq \mathbb{P}\left(\textrm{Bin}\Big( (1+5\eps)d,2\eps-\eps^2\Big) \geq 6\eps d+t\right)\\
&\leq \mathbb{P}\left(\textrm{Bin}(2d,2\eps) \geq 6\eps d+t\right)\\
&\leq \exp\left(-(t+\eps d)/60\right).
\end{align*}

Hence, for each $v\in V(G)$ and positive integer $t$, the probability that $g(v)\geq t$ (conditional on $v\in V(G')$) is at most $\exp(-(t+\eps d)/60)$. It follows that, for each $v\in V(G)$, $$\mathbb{E}[g(v) \hspace{1mm} | \hspace{1mm} v\in V(G')]=\sum_{t=1}^{\infty} \mathbb{P}(g(v)\geq t \hspace{1mm} | \hspace{1mm} v\in V(G'))\leq \sum_{t=1}^{\infty} \exp(-(t+\eps d)/60)=\frac{e^{-1/60}}{1-e^{-1/60}}\exp(-\eps d/60)\leq \exp(-\eps d/100),$$ and therefore $\mathbb{E}[\sum_{v\in V(G')} g(v)]\leq |G|\exp(-\eps d/100)$. Hence, with probability at least $3/4$, we have $\sum_{v\in V(G')} g(v)\leq 4|G|\exp(-\eps d/100)$.

Thus, with positive probability, we have that \textbf{(a)}--\textbf{(c)} hold. We will show that, if we assume these three properties hold, then there exist suitable $G''$ and $f''$. Let us first define $G''$. The vertex set of $G''$ will be the same as that of $G'$. We obtain $G''$ by deleting a minimal collection of edges from $G'$ such that $d_{G''}(v)\leq (1-\frac{7}{4}\eps)\Delta$ holds for each $v\in V(G'')$. By property \ref{property:few large} and since $d_{G'}(v)\leq \Delta$ for all $v\in V(G')$, we need to delete at most $4|G|\exp(-\eps \Delta/500)\cdot 2\eps \Delta$ edges in total. Letting $h(v)=d_{G'}(v)-d_{G''}(v)$ be the number of deleted edges at vertex $v$, we have
\begin{equation}
    \sum_{v\in V(G'')} h(v)\leq 8|G|\exp(-\eps \Delta/500)\cdot 2\eps \Delta. \label{eqn:bound h}
\end{equation}
Moreover, $d_{G''}(v)+f'(v)+g(v)+h(v)=d_{G'}(v)+f'(v)+g(v)\geq (1-\frac{5}{4}\eps)d$ for all $v\in V(G'')$, by the definition of $g$ and $h$. So, since also $d_{G''}(v)\leq (1-\frac{7}{4}\eps)\Delta$ for all $v\in V(G'')$, there exists some non-negative integer valued function $f''(v)\leq f'(v)+g(v)+h(v)$ such that $(1-\frac{5}{4}\eps)d\leq d_{G''}(v)+f''(v)\leq (1-\frac{7}{4}\eps)\Delta$ for each $v\in V(G'')$. Then, by (\ref{eqn:bound g}) and (\ref{eqn:bound h}), we have
\begin{align*}
    \sum_{v\in V(G'')} f''(v)
    &\leq \sum_{v\in V(G'')} f'(v)+\sum_{v\in V(G'')} g(v)+\sum_{v\in V(G'')} h(v) \\
    &\leq \sum_{v\in V(G)} f(v)+4|G|\exp(-\eps d/100)+8|G|\exp(-\eps \Delta/500)\cdot 2\eps \Delta \\
    &\leq \sum_{v\in V(G)} f(v)+|G|\exp(-\eps d/1000),
\end{align*}
where the last inequality follows from $\eps d\geq 10^6$ and $\Delta\geq d$.
\end{proof}

The following lemma is proven by repeated applications of Lemma \ref{lem:one step with aux}.

\begin{lemma} \label{lem:min close to max with correction}
    There is an absolute constant $c>0$ such that the following holds for each $\lambda\geq 1$ and $d> 0$. Let $G$ be a graph with $d\leq d_G(v)\leq \lambda d$ for all $v\in V(G)$. Then there exist some subgraph $H\subset G$, some function $g:V(H)\rightarrow \mathbb{Z}_{\geq 0}$ and some $d'\geq cd/\lambda^3$ such that 
    \begin{itemize}
        \item $|H|\geq c|G|/\lambda^{10}$,
        \item $\sum_{v\in V(H)} g(v)\leq |H|$ and
        \item $d'\leq d_H(v)+g(v)\leq d'+10^{7}$ for every $v\in V(H)$.
    \end{itemize}
\end{lemma}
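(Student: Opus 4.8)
The plan is to iterate Lemma~\ref{lem:one step with aux} starting from $G_0 := G$ and $f_0 \equiv 0$. Writing $\Delta_0 := \lambda d$, we have $d \le d_{G_0}(v) + f_0(v) \le \Delta_0$ for all $v$, so the hypothesis of Lemma~\ref{lem:one step with aux} is satisfied with the lower bound $d$ and upper bound $\Delta_0$ (provided the ratio $\Delta_0/d = \lambda$ is at least $1 + 10\eps$, i.e. $\lambda$ is bounded away from $1$; the case $\lambda$ close to $1$ is immediate, possibly after first running a handful of ad hoc steps or invoking Corollary~\ref{cor:AFK} directly). At step $i$, having produced $G_i$, $f_i$, a current lower bound $d_i$ and upper bound $\Delta_i$ with $d_i \le d_{G_i}(v) + f_i(v) \le \Delta_i$ for all $v \in V(G_i)$, I apply Lemma~\ref{lem:one step with aux} with $\eps = \eps_i$ chosen appropriately (say a fixed small constant like $\eps = 1/100$, as long as $\eps_i d_i \ge 10^6$) to obtain $G_{i+1} := G''$, $f_{i+1} := f''$, $d_{i+1} := (1 - \tfrac{5}{4}\eps_i) d_i$ and $\Delta_{i+1} := (1 - \tfrac{7}{4}\eps_i)\Delta_i$. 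The crucial point, already highlighted in the excerpt, is that the ratio $\Delta_i/d_i$ shrinks geometrically: $\Delta_{i+1}/d_{i+1} \le (1 - \eps/2)\cdot \Delta_i/d_i$. So after $T = O(\log \lambda / \eps) = O(\log \lambda)$ steps we reach a graph $H := G_T$ with $\Delta_T / d_T \le 1 + (\text{small constant})$, i.e. $\Delta_T \le d_T + O(d_T/\text{constant})$ — but we actually want the gap to be an \emph{absolute constant}, so I should continue a bit more carefully at the end (see below).

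The bookkeeping has three parts. First, vertex count: each step keeps at least a $(1 - 4\eps_i)$ fraction of the vertices, so $|G_T| \ge |G|\prod_{i<T}(1 - 4\eps_i)$; with $\eps_i$ a fixed constant and $T = O(\log\lambda)$ steps this gives $|H| \ge |G|\cdot c^{O(\log\lambda)} = |G| / \lambda^{O(1)}$, and one checks the exponent can be taken to be $10$ by tuning $\eps$ (or allowing slightly more steps with smaller $\eps$). Second, the correction sum: each application adds at most $|G_i|\exp(-\eps d_i/1000)$ to $\sum f$. Since $d_i \ge d_T \ge cd/\lambda^3$ throughout (it only decreases by constant factors over $T$ steps), and $|G_i| \le |G|$, the total increase over all $T$ steps is at most $T|G|\exp(-\eps d_T/1000)$. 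If $d_T$ is large enough — specifically if $d_T \ge C\lambda^{10}$ for a suitable absolute constant — this is at most $\tfrac{1}{2}|G|/\lambda^{10} \le \tfrac{1}{2}|H|$... actually more carefully we want $\sum_{v\in V(H)} g(v) \le |H|$, so we need the accumulated correction mass, which is a sum starting from $0$, to be at most $|H|$; since $|H| \ge c|G|/\lambda^{10}$ and each term is $|G|\exp(-\eps d_i/1000)$ with $d_i$ bounded below by $cd/\lambda^3$, this holds as long as $d$ is at least some polynomial in $\lambda$ (and if $d$ is smaller than that, then $cd/\lambda^3 = O(1)$, so the desired $d'$ is $O(1)$ and the conclusion is trivially satisfiable by taking $H$ to be a single vertex or small piece — this degenerate regime must be handled separately). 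Third, the degree window: at the end we set $d' := d_T = \prod_{i<T}(1-\tfrac54\eps_i)\, d = \Theta(d)$ over $O(\log\lambda)$ constant-factor steps, which is $\ge cd/\lambda^3$ comfortably (in fact $\ge d/\mathrm{poly}(\lambda)$), and we have $d' \le d_H(v) + g(v) \le \Delta_T$ for all $v$.

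The one genuine subtlety — and the step I expect to be the main obstacle — is getting the \emph{additive} gap down to an absolute constant $10^7$ rather than a multiplicative $1 + \eps$. Running the iteration with a fixed $\eps$ only brings $\Delta_i/d_i$ down to $1 + \text{const}$; from there, since $d_i$ only shrinks by bounded factors, the additive gap $\Delta_i - d_i$ is still $\Theta(d)$, not constant. The fix is to let $\eps_i$ \emph{grow} (towards the maximum $1/100$) and keep iterating: as long as $\eps_i d_i \ge 10^6$ we may apply the lemma, and the gap $\Delta_i - d_i$ decays geometrically, so after $O(\log d)$ further steps it reaches $O(1/\eps) = O(1)$, at which point $\eps d_i$ becomes too small to continue — and that is precisely the moment the gap has stabilised at an absolute constant, which we can make $\le 10^7$ by choosing the stopping criterion (there is slack between $10^6$ in the hypothesis and $10^7$ in the target). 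One must check that these extra $O(\log d)$ steps cost only constant factors in the vertex count and negligible additional correction mass — they do, because once $\Delta_i - d_i$ is below a constant the number of remaining steps is $O(\log d)$ but the per-step vertex loss factor is $(1 - 4\eps_i) \ge 1 - 4/100$ and the per-step correction increase $|G_i|\exp(-\eps_i d_i/1000)$ is still exponentially small in $d_i \gtrsim d/\mathrm{poly}(\lambda)$ — so the same polynomial-in-$\lambda$ (and no worse in $d$) bounds survive, and the small degenerate regime where $d$ is not large compared to $\lambda$ is dispatched trivially as noted above.
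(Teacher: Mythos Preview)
Your overall plan matches the paper's: iterate Lemma~\ref{lem:one step with aux}, tracking $(d_i,\Delta_i,G_i,f_i)$, handle the small-$d$ regime trivially, and stop once $\Delta_i-d_i\le 10^7$. However, two points in your second-phase analysis are wrong as written.

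First, $\eps_i$ cannot \emph{grow} toward $1/100$ once the ratio is close to~$1$: the hypothesis $\Delta_i\ge(1+10\eps_i)d_i$ of Lemma~\ref{lem:one step with aux} forces $\eps_i\le(\Delta_i/d_i-1)/10$, which \emph{shrinks} as the gap closes. The paper takes $\eps_i$ maximal subject to $\eps_i\le 1/100$ and this ratio constraint, so $\eps_i=1/100$ while $\Delta_i/d_i>1.1$ and $\eps_i=(\Delta_i-d_i)/(10d_i)$ thereafter.

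Second, and more seriously, your vertex-count argument in the second phase fails: ``$O(\log d)$ steps each with factor $\ge 1-4/100$'' only yields $|H|\ge|G|\cdot 0.96^{\,O(\log d)}=|G|/d^{O(1)}$, not $|G|/\lambda^{O(1)}$. The paper avoids counting steps altogether. Since
\[
1\le\frac{\Delta_k}{d_k}=\lambda\prod_{i<k}\frac{1-\tfrac74\eps_i}{1-\tfrac54\eps_i}\le\lambda\prod_{i<k}(1-\eps_i/2),
\]
one gets $\prod_i(1-\eps_i/2)\ge 1/\lambda$. Using $1-\tfrac54\eps\ge(1-\eps/2)^3$ and $1-4\eps\ge(1-\eps/2)^{10}$ for $\eps\le 1/100$, this immediately gives $d_k=d\prod_i(1-\tfrac54\eps_i)\ge d/\lambda^3$ and $|H|\ge|G|\prod_i(1-4\eps_i)\ge|G|/\lambda^{10}$, uniformly in the number of iterations. (An alternative repair of your argument: once $\eps_i<1/100$, the quantities $\eps_i d_i=(\Delta_i-d_i)/10$ decay geometrically, hence so do the $\eps_i$, so $\sum_i\eps_i=O(1)$ and the second-phase product is bounded below by an absolute constant. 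But your stated per-step bound does not establish this.)

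The paper also streamlines the correction-sum bookkeeping: rather than summing all increments, it shows inductively that $\sum_v f_j(v)\le|G_j|\exp(-\eps_{j-1}d_{j-1}/2000)$, using that $\eps_j d_j$ is itself geometrically decreasing; at termination this is at most $|H|\exp(-500)\le|H|$.
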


\begin{proof}
    We will show that $c=10^{-8}$ satisfies the statement in the lemma. Let $G$ be a graph with $d\leq d_G(v)\leq \lambda d$ for all $v\in V(G)$. If $d<10^{8} \lambda^3$, then the statement is trivial as we can let $d'=1$, let $H$ be a maximal matching in $G$ and let $g$ be the constant $0$ function on $V(H)$. The only non-trivial property to check is that $|H|\geq c|G|/\lambda^{10}$, which follows from $|G|d/2\leq |G|\delta(G)/2\leq e(G)\leq |H|\Delta(G)\leq |H|\lambda d$.
    Hence, we may assume that $d\geq 10^{8}\lambda^3$.
    
    We define $d_i$, $\Delta_i$ and $\eps_i$ recursively as follows. Let $d_0=d$ and $\Delta_0=\lambda d$. Having defined $d_i$ and $\Delta_i$, we terminate the process if $\Delta_i\leq d_i+10^7$. Otherwise, if $\Delta_i>d_i+10^7$, we choose $\eps_i$ to be maximal subject to $\eps_i\leq 1/100$ and $\Delta_i\geq (1+10\eps_i)d_i$. Then we let $d_{i+1}=(1-\frac{5}{4}\eps_i)d_i$ and $\Delta_{i+1}=(1-\frac{7}{4}\eps_i)\Delta_i$.
    Let $k$ be the minimal non-negative integer $i$ such that $\Delta_i\leq d_i+10^7$, and let $k=\infty$ if no such $i$ exists. We now prove three claims about the properties of $d_i$, $\Delta_i$ and $\eps_i$.

    \medskip

    \noindent \emph{Claim 1.} For all $0\leq j\leq k$, we have $d_j\geq d/\lambda^3$.

    \medskip

    \noindent \emph{Proof of Claim 1.} 
    Note that we have
    $$1\leq \frac{\Delta_j}{d_j}=\frac{\Delta_0}{d_0}\prod_{i=0}^{j-1}\frac{1-\frac{7}{4}\eps_i}{1-\frac{5}{4}\eps_i}\leq \frac{\Delta_0}{d_0}\prod_{i=0}^{j-1} (1-\eps_i/2)= \lambda \prod_{i=0}^{j-1} (1-\eps_i/2),$$
    so $\prod_{i=0}^{j-1} (1-\eps_i/2)\geq 1/\lambda$. Since $\eps_i\leq 1/100$ for each $i$, we have $1-{5\eps_i}/{4}\geq (1-\eps_i/2)^3$, and hence,
    $$d_j=d_0\prod_{i=0}^{j-1} (1-5\eps_i/4)\geq d_0\prod_{i=0}^{j-1} (1-\eps_i/2)^3\geq d_0/\lambda^3=d/\lambda^3,$$
    as desired.\hfill $\boxdot$

    \medskip

    \noindent \emph{Claim 2.} For all $0\leq j\leq k-1$, we have $\eps_j d_j\geq 10^6$.

    \medskip

    \noindent \emph{Proof of Claim 2.} By the definition of $\eps_j$, we have either $\eps_j=1/100$ or $\Delta_j=(1+10\eps_j)d_j$. In the former case, we have $\eps_j d_j=d_j/100\geq 10^6$, where we have used Claim 1 and the assumption that $d\geq 10^8\lambda^3$. In the latter case, observe that $\Delta_j>d_j+10^7$ by the definition of $k$, and therefore $10\eps_j d_j>10^7$, so $\eps_j d_j>10^6$.\hfill $\boxdot$

    \medskip

    \noindent \emph{Claim 3.} For all $0\leq j\leq k-2$, we have $\eps_{j+1} d_{j+1}\leq \frac{99}{100}\eps_j d_j$.

    \medskip

    \noindent \emph{Proof of Claim 3.} If $\eps_j=1/100$, then $\eps_{j+1}d_{j+1}=\eps_{j+1}(1-\frac{1}{80})d_j\leq \eps_j(1-\frac{1}{80})d_j<\frac{99}{100}\eps_j d_j$, as desired. Else, $\eps_i<1/100$ and $\Delta_j=(1+10\eps_j)d_j$. Now $\frac{\Delta_{j+1}}{d_{j+1}}=\frac{1-\frac{7}{4}\eps_j}{1-\frac{5}{4}\eps_j}\frac{\Delta_j}{d_j}\leq (1-\eps_j/2)\frac{\Delta_j}{d_j}=(1-\eps_j/2)(1+10\eps_j)<(1+\frac{99}{10}\eps_j)$. Hence, $\eps_{j+1}\leq \frac{99}{100}\eps_j$ and the claim follows since $d_{j+1}\leq d_j$. \hfill $\boxdot$

    \medskip

    Note that Claim 2 and Claim 3 together imply that $k$ is finite. We now apply Lemma~\ref{lem:one step with aux} iteratively, for the following claim.

    \medskip

    \noindent \emph{Claim 4.} For all $0\leq j\leq k$, there exist a subgraph $G_j\subset G$ and a function $f_j:V(G_j)\rightarrow \mathbb{Z}_{\geq 0}$ such that $d_j\leq d_{G_j}(v)+f_j(v)\leq \Delta_j$ for every $v\in V(G_j)$, $|G_j|\geq |G|\prod_{i=0}^{j-1} (1-4\eps_i)$, $f_0=0$ everywhere and, for all $1\leq j\leq k$, $\sum_{v\in V(G_j)} f_j(v)\leq |G_j|\exp(-\eps_{j-1} d_{j-1}/2000)$.

    \medskip

    \noindent \emph{Proof of Claim 4.} We prove the claim by induction on $j$. For $j=0$, we can take $G_0=G$ and let $f_0$ be the constant zero function.
    Now given suitable $G_j$ and $f_j$ for some $j\leq k-1$, we apply Lemma \ref{lem:one step with aux} with $\eps=\eps_j$, $d=d_j$, $\Delta=\Delta_j$, $f=f_j$ and $G=G_j$. The conditions $\eps_j\leq 1/100$ and $\Delta_j\geq (1+10\eps_j)d_j$ are satisfied by the definition of $\eps_j$, whereas the condition $\eps_j d_j\geq 10^6$ is satisfied by Claim 2. Now let $G_{j+1}=G''$ and $f_{j+1}=f''$ for the graph $G''$ and function $f''$ provided by Lemma \ref{lem:one step with aux}. Using the properties of $G_{j+1}$ and $f_{j+1}$ from Lemma~\ref{lem:one step with aux}, the induction hypothesis, $|G_{j+1}|\geq (1-4\eps_j)|G_j|\geq |G_j|/2$, Claim 2 and Claim 3, we have
    $$\sum_{v\in V(G_{j+1})} f_{j+1}(v)\leq |G_{j}|\exp(-\eps_{j-1}d_{j-1}/2000)+|G_j|\exp(-\eps_j d_j/1000)\leq |G_{j+1}|\exp(-\eps_{j}d_j/2000),$$
    where we define the term $|G_{j}|\exp(-\eps_{j-1}d_{j-1}/2000)$ to be zero if $j=0$.
    Hence, these choices satisfy the conditions in the claim. \hfill $\boxdot$

    Now let $H=G_k$, $g=f_k$ and $d'=d_k$. By Claim 1, we have $d'\geq d/\lambda^3\geq cd/\lambda^3$. By the definition of $k$, we have $\Delta_k\leq d_k+10^7$. As observed in the proof of Claim 1, we have $\prod_{i=0}^{k-1} (1-\eps_i/2)\geq 1/\lambda$, so $|H|=|G_k|\geq |G|\prod_{i=0}^{k-1} (1-4\eps_i)\geq |G|/\lambda^{10}\geq c|G|/\lambda^{10}$. Finally, $\sum_{v\in V(H)} g(v)=\sum_{v\in V(G_k)} f_k(v)\leq |H|$, using Claim 4 and Claim 2. 
\end{proof}

Lemma \ref{lem:min close to max with correction} readily implies Theorem \ref{thm:constant diff}.

\begin{proof}[ of Theorem \ref{thm:constant diff}] Using Lemma \ref{lem:min close to max with correction}, let $c_0\leq 1$ be a constant which satisfies the property in that lemma when replacing $c$.  
We will show that $c=c_0/2$ satisfies the properties of the theorem.
    This is trivial if $d\leq c^{-1}\lambda^3$, since then we can take $d'=1$ and let $H$ be a maximal matching in $G$. Assume therefore that $d>c^{-1}\lambda^3$. By Lemma \ref{lem:min close to max with correction}, there exists a subgraph $H\subset G$, some function $g:V(H)\rightarrow \mathbb{Z}_{\geq 0}$ and some $\tilde{d}\geq c_0d/\lambda^3$ such that $\tilde{d}\leq d_H(v)+g(v)\leq \tilde{d}+10^7$ for every $v\in V(H)$, $|H|\geq c_0|G|/\lambda^{10}$ and $\sum_{v\in V(H)} g(v)\leq |H|$. It follows that $H$ has average degree at least $\tilde{d}-1$ and maximum degree at most $\tilde{d}+10^7$. Since $\tilde{d}-1\geq c_0d/\lambda^3 -1=2cd/\lambda^3 -1 \geq cd/\lambda^3$, this subgraph $H$ satisfies the conditions of the theorem.
\end{proof}

Finally, here, we show how to deduce Theorem \ref{thm:regular in almost regular} from Theorem \ref{thm:constant diff} and Corollary \ref{cor:AFK}.

\begin{proof}[ of Theorem \ref{thm:regular in almost regular}] Using Theorem \ref{thm:constant diff}, let $c_0$ be a constant which satisfies the property in that theorem when replacing $c$.  
We will show that $c=c(\lambda)=10^{-10}c_0/(4\lambda)^3$ satisfies the conditions of the theorem. Let $G$ be a graph with average degree $d$ and maximum degree at most $\lambda d$. By standard results, $G$ has a bipartite subgraph $G'$ with average degree at least $d/2$, which, in turn, has a subgraph $G''$ with minimum degree at least $d/4$. Since $G''$ still has maximum degree at most $\lambda d$, we can apply Theorem \ref{thm:constant diff} for $G''$ with $d/4$ in place of $d$ and $4\lambda$ in place of $\lambda$ to find a subgraph $H$ and some $d'\geq c_0 (d/4)/(4\lambda)^3$ such that $H$ has average degree $d'$ and maximum degree at most $d'+10^8$. By Corollary~\ref{cor:AFK}, $H$ contains an $s$-regular subgraph $H'$ for some $s\geq \frac{d'}{10\cdot 10^8}\geq cd$. Since $H'$ is an $s$-regular bipartite graph (as it is a subgraph of $G'$), repeated applications of Hall's theorem yield an $r$-regular subgraph of $H'$ for all $1\leq r\leq s$. This finishes the proof of Theorem \ref{thm:regular in almost regular}.
\end{proof}

\section{Proof of Theorem \ref{thm:general ES}} \label{sec:tight ES}

In this section we prove Theorem \ref{thm:general ES}. Similarly to the strategy in Janzer and Sudakov's resolution of the Erd\H{o}s-Sauer problem \cite{janzer2023resolution}, to find an $r$-regular subgraph we will first reduce to two cases by finding a subgraph in which either \textbf{1)}~no vertices have very high degree or \textbf{2)} most of the edges contain an endvertex with high degree.  

In \textbf{case 1}, we can use the methods developed in \cite{janzer2023resolution} to find a sufficiently dense almost-regular subgraph. Then, crucially, using Theorem \ref{thm:regular in almost regular} that we proved in the previous section, we will be able to find an $r$-regular subgraph with no further loss. This is brought together to tackle \textbf{case 1} as Lemma~\ref{lem:almost bireg to reg}.

In \textbf{case 2}, more precisely, we find a bipartite subgraph $G'$ with parts $A$ and $B$ such that every $v\in B$ has degree precisely $r$ in $G'$, and the degrees of vertices in $A$ are at most $d^{1+\frac{1}{2r}}$ and on average at least $d$, for some $d$ which is quite a bit larger than $r$. We will then show that such a graph contains an $r$-regular subgraph (see Lemma \ref{lem:small jump implies k-regular} below). A similar result was proved in \cite{janzer2023resolution}, however with an extra assumption on the codegrees of $G'$. This assumption (which cannot be easily removed from the proof in \cite{janzer2023resolution}) results in a huge loss for the bounds if $r$ is not a constant, so we develop a new approach, using a similar strategy to that used to find regular subgraphs in hypergraphs by Janzer, Sudakov and Tomon \cite[Section~5]{JST22+}, and earlier by Kim~\cite{Kim16}.

We now turn to the details.
For dealing with \textbf{case 1}, we will need the following definition and lemma from \cite{janzer2023resolution}.

\begin{defn} \label{def:almost-biregular}
	A bipartite graph $G$ with vertex classes $A$ and $B$ is \emph{$(L,s)$-almost-biregular} if the following holds. For every $v\in B$, $d_G(v)=s$  and, writing $d=e(G)/|A|$, we have $d\geq s$ (or, equivalently, $|A|\leq |B|$) and $d_G(u)\leq Ld$ for every $u\in A$.
\end{defn}

\begin{lemma}[Janzer--Sudakov {\cite[Lemma 3.5]{janzer2023resolution}}] \label{lem:almost bireg to almost reg}
	Let $G$ be an $(L,s)$-almost-biregular graph for some $L\geq s\geq 2$. Then $G$ has a $64$-almost-regular subgraph with average degree at least $\frac{s}{16\log L}$.
\end{lemma}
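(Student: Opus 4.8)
The plan is to dispose of a trivial range, then reduce to the case of a bipartite graph whose $A$-degrees span only a factor $O(L)$, then dyadically bucket and clean up. For the trivial part: since $s\le L$, whenever $s\le 16\log L$ (in particular whenever $L$ is below an absolute constant) we have $\tfrac{s}{16\log L}\le 1$, and a single edge of $G$ is a $1$-almost-regular subgraph of average degree $1$; so I would assume from now on that $s>16\log L$, so that $L$ and $s$ are large, and also that $|A|$ is large (otherwise $G$ has boundedly many vertices and the claim is checked directly).

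The first real step is a \emph{normalisation}: the obstruction to just dyadically bucketing the $A$-degrees is that they range over $[1,Ld]$, whose logarithm is $\log(Ld)$ rather than $\log L$. Using $d\ge s$, which is equivalent to $|A|\le|B|$, I would take a uniformly random $B^*\subseteq B$ with $|B^*|=|A|$ and set $G^*=G[A\cup B^*]$. Then $e(G^*)=|B^*|s=|A|s$ exactly, each $v\in B^*$ still has $d_{G^*}(v)=s$, and each $u\in A$ has $\E[d_{G^*}(u)]=d_G(u)\cdot\tfrac{|A|}{|B|}=d_G(u)\cdot\tfrac s d\le Ls$. A short Markov/Chernoff estimate gives, with positive probability, that the edges meeting $A$-vertices of $G^*$-degree above $4Ls$ form at most a quarter of $e(G^*)$; removing those vertices, the $A$-vertices of $G^*$-degree below $s/2$, and any isolated $B^*$-vertices leaves a subgraph $G_1$ with $e(G_1)\ge\tfrac14|A|s$, at most $|A|$ vertices per side, all $A$-degrees in $[s/2,4Ls]$ and all $B^*$-degrees in $[1,s]$. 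Now the $A$-degrees span only a factor $O(L)$.

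Next I would split the $A$-side of $G_1$ into at most $O(\log L)$ dyadic degree classes; the heaviest, $A_i$, with degrees in $[2^i,2^{i+1})$ and $s/2\le 2^i\le 4Ls$, carries at least $\Omega\big(\tfrac{|A|s}{\log L}\big)$ edges. In $G_2:=G_1[A_i\cup B^*]$ (with isolated $B^*$-vertices removed) the $A$-side is $2$-almost-regular at degree $\approx 2^i$, the $B^*$-side has degrees in $[1,s]$, and it remains only to pass to a $64$-almost-regular subgraph of $G_2$ keeping a constant fraction of the edges: as $G_2$ lies on $O(|A|)$ vertices, such a subgraph automatically has average degree $\Omega(s/\log L)$. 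Here one deletes, cheaply (using $|A_i|\le e(G_2)/2^i$ together with $2^i\ge s/2$), the low-degree $A$-vertices, and must then bring the $B^*$-degrees into the window $[2^i/c,2^{i+1})$ dictated by the $A$-side; this is the point at which the biregular hypothesis is used. After optimising the constants one then reads off average degree at least $\tfrac{s}{16\log L}$ with $K=64$.

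The step I expect to be the main obstacle is exactly this last two-sided clean-up of $G_2$. Naively pruning the low-degree $B^*$-vertices can destroy almost all the edges, since there may genuinely be many $B^*$-vertices of degree far below $2^i$; and if instead one dyadically buckets the $B^*$-side, the chosen bucket no longer matches the $A$-side, so one must re-regularise the $A$-side, risking a compounding loss of extra $\log$-factors. Making this interaction lose only a bounded (or harmlessly small) factor overall — rather than a factor polynomial in $L$, which a round-by-round loss over potentially $\Omega(\log L)$ rounds would cost — is the technical heart of the lemma, and is where I would concentrate the effort.
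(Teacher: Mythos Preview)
This lemma is not proved in the paper; it is quoted from Janzer--Sudakov \cite[Lemma~3.5]{janzer2023resolution} and used as a black box, so there is no in-paper argument to compare your sketch against.

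As for the sketch itself: it is explicitly incomplete. Your normalisation and dyadic bucketing bring you to a bipartite $G_2$ with $A$-side $2$-almost-regular at level $2^i\in[s/2,4Ls]$ and $B^*$-side degrees in $[1,s]$, but you then correctly identify the remaining step --- making $G_2$ genuinely $64$-almost-regular while losing only a bounded factor --- as the ``technical heart'' and leave it open. That gap is real: when $2^i$ sits near $4Ls$, the two degree windows are a factor $\Theta(L)$ apart, so neither pruning low-degree $B^*$-vertices (which can kill most of the edges on the $A$-side) nor a second dyadic split on $B^*$ (which costs another factor $\log s\le\log L$ that you cannot afford) finishes the job, and an alternating clean-up threatens to compound losses over $\Theta(\log L)$ rounds exactly as you fear. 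Until you supply an argument that exploits the one-side-already-regular structure of $G_2$ to close this step with only $O(1)$ total loss, the proposal remains a plan rather than a proof.
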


Using our Theorem \ref{thm:regular in almost regular}, we can immediately improve Lemma \ref{lem:almost bireg to almost reg} to give a regular subgraph, as follows.

\begin{lemma} \label{lem:almost bireg to reg}
    There is an absolute constant $c>0$ such that the following holds. Let $G$ be an $(L,s)$-almost-biregular graph for some $L\geq s\geq 2$. Then $G$ has an $r$-regular subgraph for every $r\leq cs/\log L$.
\end{lemma}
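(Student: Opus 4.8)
The plan is to combine Lemma~\ref{lem:almost bireg to almost reg} with Theorem~\ref{thm:regular in almost regular} in the most direct way possible. Given an $(L,s)$-almost-biregular graph $G$ with $L\geq s\geq 2$, Lemma~\ref{lem:almost bireg to almost reg} immediately supplies a $64$-almost-regular subgraph $G'\subseteq G$ with average degree $d':=d(G')\geq \frac{s}{16\log L}$. Now apply Theorem~\ref{thm:regular in almost regular} to $G'$ with $\lambda=64$: since $G'$ is $64$-almost-regular, its maximum degree is at most $64\,\delta(G')\leq 64\,d(G')=64d'$, so the hypothesis of Theorem~\ref{thm:regular in almost regular} is met with this value of $\lambda$. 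Hence there is a constant $c'=c(64)>0$ such that $G'$, and therefore $G$, contains an $r$-regular subgraph for every $r\leq c'd'$.

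It remains to tidy up the relationship between the two ``$c$''s and the two degree quantities. Setting $c:=c'/16$, we get that $G$ contains an $r$-regular subgraph for every $r\leq c'd'\;$, and since $d'\geq \frac{s}{16\log L}$ we have $c's/(16\log L)\leq c'd'$, so in particular $G$ contains an $r$-regular subgraph for every positive integer $r\leq \frac{c's}{16\log L}=\frac{cs}{\log L}$. (If $\frac{cs}{\log L}<1$ there is nothing to prove, as the statement is vacuous; note also $\log L\geq\log s\geq 1$ so the quantity is well-defined and positive.) This $c$ is absolute because $c(64)$ is a fixed constant, not depending on $G$, $L$, or $s$.

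There is essentially no obstacle here: the lemma is a one-line corollary of the two results quoted just above it, and the only thing to be slightly careful about is checking that the almost-regularity constant $64$ feeding into Theorem~\ref{thm:regular in almost regular} is a legitimate choice of $\lambda\geq 1$ (it is) and that $d(G')$ can be used in place of $\delta(G')$ when bounding the maximum degree (it can, since $\Delta(G')\leq 64\,\delta(G')\leq 64\,d(G')$). A minor bookkeeping point is that Theorem~\ref{thm:regular in almost regular} is stated for all real $r\leq cd$, while here we only need integer $r$, so no rounding issues arise.

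\begin{proof}[ of Lemma \ref{lem:almost bireg to reg}]
Let $c_0=c(64)>0$ be the constant provided by Theorem~\ref{thm:regular in almost regular} applied with $\lambda=64$, and set $c=c_0/16$. Let $G$ be an $(L,s)$-almost-biregular graph with $L\geq s\geq 2$, and let $r\leq cs/\log L$ be a positive integer; if no such $r$ exists the statement is vacuous, so assume $r\geq 1$. By Lemma~\ref{lem:almost bireg to almost reg}, $G$ has a $64$-almost-regular subgraph $G'$ with average degree $d':=d(G')\geq \frac{s}{16\log L}$. Since $G'$ is $64$-almost-regular, $\Delta(G')\leq 64\,\delta(G')\leq 64\,d(G')=64 d'$, so Theorem~\ref{thm:regular in almost regular} applied to $G'$ with $\lambda=64$ gives that $G'$ contains an $s'$-regular subgraph for every $s'\leq c_0 d'$. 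As $r\leq cs/\log L=\frac{c_0}{16}\cdot\frac{s}{\log L}\leq c_0 d'$, it follows that $G'$, and hence $G$, contains an $r$-regular subgraph.
\end{proof}
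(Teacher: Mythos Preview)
Your proof is correct and follows exactly the approach the paper intends: combine Lemma~\ref{lem:almost bireg to almost reg} with Theorem~\ref{thm:regular in almost regular} (with $\lambda=64$). The paper does not even spell out a formal proof, simply noting that the lemma follows immediately from these two results, so your argument matches it precisely.
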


For \textbf{case 2}, where we work in a subgraph where most of the edges have an endvertex of high degree, our key lemma will be as follows.

\begin{lemma} \label{lem:small jump implies k-regular}
	The following holds for all sufficiently large positive integers $r$. Let $G$ be a bipartite graph on at most $n$ vertices with parts $A$ and $B$ such that $d_G(v)=r$ for every $v\in B$, $d_G(u)\leq \Delta$ for every $u\in A$ and $e(G)=d|A|$, where $d\ge r^{3r}(\log n)^2$ and $\Delta\le d^{1+\frac{1}{2r}}$.
		Then $G$ contains an $r$-regular subgraph.
\end{lemma}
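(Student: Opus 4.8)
The plan is to find an $r$-regular subgraph by a random process that picks, for each $u \in A$, a random subset of its incident edges of a carefully chosen size, and then argues that after this round (and a small clean-up) the graph is close enough to $r$-regular on the $B$-side that a final pruning step succeeds. The point of the hypothesis $\Delta \le d^{1+\frac{1}{2r}}$ is that all degrees in $A$ lie in the narrow window $[d, d^{1+1/(2r)}]$ on average, so if we keep each edge at $u$ with probability roughly $r/d_G(u)$ (independently, or via an exactly-$\lceil \text{something}\rceil$-sized random subset to reduce variance), then every $v \in B$ has expected new degree $\sum_{u \sim v} r/d_G(u)$, which lies between $r^2/\Delta \cdot (\text{stuff})$ and $r$; the multiplicative spread of the $d_G(u)$ is at most $d^{1/(2r)}$, so the expected degree of each $v$ is $r(1 \pm O(\log d / r))$ — that is, within a $1 + O(\log d/r)$ factor of being uniform. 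The condition $d \ge r^{3r}(\log n)^2$ ensures $\log d \ge 3r \log r + 2\log\log n$ is large enough that concentration kicks in while the relative error $\log d / r$ is still small; one should track constants to see that the residual non-regularity is controllable.

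More concretely, I would proceed as follows. \textbf{Step 1 (sparsification on the $A$-side).} For each $u \in A$ choose a uniformly random subset $S_u \subseteq N_G(u)$ with $|S_u| = \lceil r \rceil$ (or $\lfloor \alpha d_G(u) \rfloor$ for a suitable scaling $\alpha$ chosen so that the resulting $B$-degrees concentrate near a common value $t$ that we will take to be a bit below $r$), independently across $u$, and let $G_1$ be the resulting bipartite graph; now every $u \in A$ has degree exactly $\lceil r\rceil$ in $G_1$. \textbf{Step 2 (concentration of $B$-degrees).} For each $v \in B$, $d_{G_1}(v)$ is a sum of independent indicator variables (one per $u \sim v$), so by a Chernoff bound (Lemma~\ref{chernoff}) and a union bound over the at most $n$ vertices $v$, with positive probability every $v \in B$ satisfies $d_{G_1}(v) = (1 \pm \eta) \E[d_{G_1}(v)]$ for $\eta$ of order $\sqrt{\log n / t}$, which is $o(1/r)$-small given $t = \Theta(r)$ and $d \ge r^{3r}(\log n)^2$ — the key is that $\log n$ is dwarfed by the degrees. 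Combined with the $\le d^{1/(2r)}$ spread of the scaling factors, we get that all $B$-degrees in $G_1$ lie in an interval $[(1-\theta)t, (1+\theta)t]$ with $\theta t = O(\log r + \log\log n)$, i.e. an \emph{additive} window of width $O(\log r + \log\log n)$ around a common value $t$. \textbf{Step 3 (regularising the $B$-side).} On the $B$-side, $G_1$ is now almost-regular with average degree $\Theta(r)$ and degrees concentrated in a window of additive width $\ll r$; after symmetrising the roles, $G_1$ is a bipartite graph that is $O(1)$-almost-regular with average degree $\Theta(r)$, so Theorem~\ref{thm:regular in almost regular} gives an $s$-regular subgraph for some $s = \Omega(r)$, and then Hall's theorem (applied repeatedly, using bipartiteness) produces an $r'$-regular subgraph for every $r' \le s$ — but this only yields $r' = \Omega(r)$, not $r' = r$. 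To get exactly $r$, I would instead be more careful in Step 1: scale so that the typical $B$-degree $t$ is slightly \emph{above} $r$, keep the $A$-degrees exactly equal, and then in a clean-up step delete a few edges at the excess-degree vertices of $B$ to push all $B$-degrees down to exactly $r$ while keeping $A$-degrees within an additive constant of each other, at which point Corollary~\ref{cor:AFK} (or directly Theorem~\ref{thm:regular in almost regular}) applied to the whole bipartite graph, followed by Hall's theorem down to degree $r$, closes the argument; one must check the arithmetic so that the final regular degree reached is at least $r$.

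\textbf{Main obstacle.} The delicate point is getting the quantitative bookkeeping to produce \emph{exactly} $r$ and not merely $\Omega(r)$: the window of non-regularity after sparsification has additive width $\Theta(\log r + \log\log n)$, which is why we need $d$ (hence $t$) to be polynomially larger than this and why the hypothesis $d \ge r^{3r}(\log n)^2$ appears — we must spend a $\log$-factor to absorb the $\log n$ from the union bound and an $r^{\Theta(r)}$ factor so that $\log d \gg r\log r$ makes the relative sparsification error $\le d^{1/(2r)}$ genuinely negligible. Equivalently, the obstacle is that the naive two-step ``sparsify then apply Theorem~\ref{thm:regular in almost regular}'' loses a constant factor in the degree, so we must either run the sparsification to land above $r$ with room to spare and then trim, or iterate a mild self-correcting random deletion (in the spirit of Lemma~\ref{lem:one step with aux}) to squeeze the $B$-degrees to a constant window before invoking the algebraic regularisation of Alon--Friedland--Kalai. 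Verifying that all the error terms — union bound over $\le n$ vertices, the $d^{1/(2r)}$ multiplicative spread, and the Chernoff deviation — simultaneously fit inside the budget provided by $d \ge r^{3r}(\log n)^2$ is the crux of the proof, and is exactly where the precise form of the hypothesis is used.
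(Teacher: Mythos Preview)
Your approach has a basic arithmetic error that breaks Step~1. If you keep about $r$ edges at each $u\in A$, then $e(G_1)\approx r|A|$; since every $v\in B$ has degree $r$ we have $|B|=e(G)/r=d|A|/r$, so the \emph{average} $B$-degree in $G_1$ is $r|A|/|B|=r^2/d$, which under $d\ge r^{3r}(\log n)^2$ is far below $1$. Almost every $v\in B$ becomes isolated, and there is nothing near $r$ for the $B$-degrees to concentrate around. Your alternative --- keep $\lfloor\alpha d_G(u)\rfloor$ edges at each $u$ for a single scaling $\alpha$ --- leaves the $A$-degrees proportional to the original $d_G(u)$ and hence regularises nothing on that side.

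Two further problems compound this. First, the spread $d^{1/(2r)}$ is nowhere near $1$: from $\log d\ge 3r\log r$ one gets $d^{1/(2r)}\ge r^{3/2}$, so the quantity $\log d/r$ that you call the relative error is at least $3\log r$, not small. Second, only the \emph{average} $A$-degree equals $d$; the hypotheses impose no lower bound on individual $d_G(u)$, so the window $[d,\Delta]$ you assume for the $A$-degrees is not available to begin with.

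The paper's proof is entirely different and uses no concentration at all. It passes to the $r$-uniform multi-hypergraph $\mathcal G$ on vertex set $A$ whose edges are the neighbourhoods $N_G(v)$ for $v\in B$; an $r$-regular sub-multi-hypergraph of $\mathcal G$ translates back to an $r$-regular subgraph of $G$. One counts matchings of a carefully chosen size $t$ in $\mathcal G$ by a greedy argument (Lemma~\ref{lemma:num_matching}), pigeonholes over the $\binom{|A|}{tr}$ possible supports to find many such matchings spanning a common $tr$-element set, and then applies the sunflower bound of Alweiss--Lovett--Wu--Zhang and Rao (Lemma~\ref{lemma:sunflower}) to extract an $r$-sunflower among them. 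After removing the core, the petals form the desired $r$-regular sub-multi-hypergraph. The hypothesis $d\ge r^{3r}(\log n)^2$ is used precisely to make the matching count exceed the sunflower threshold.
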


It will be crucial to turn this lemma into an essentially equivalent statement on regular subgraphs in multi-hypergraphs. Here, $d(\mathcal{G})$ and $\Delta(\mathcal{G})$ are, respectively, over $v\in V(\mathcal{G})$, the average and maximum number of edges in $\mathcal{G}$ containing $v$.

\begin{lemma} \label{lem:regular hypergraph}
	The following holds for all sufficiently large positive integers $r$. Let $\G$ be an $r$-uniform multi-hypergraph on $N$ vertices such that $\Delta(\G)\leq d(\G)^{1+\frac{1}{2r}}$ and $d(\G)\geq  r^{3r}(\log N)^2$. Then $\G$ contains an $r$-regular sub(multi)hypergraph. 
\end{lemma}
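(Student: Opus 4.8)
The plan is to find the $r$-regular sub-hypergraph by a random process analogous to the one used by Janzer, Sudakov and Tomon \cite{JST22+} and Kim \cite{Kim16}: we will repeatedly pass to random induced sub-hypergraphs that are ``more balanced'' in the degree sequence while retaining positive average degree, until we reach a point where the local-lemma / algebraic machinery produces an exactly $r$-regular piece. More concretely, set $d=d(\G)$ and let $H$ be a large prime in $[d^{1/(2r)}, 2d^{1/(2r)}]$ (using $d\geq r^{3r}(\log N)^2$, so $d^{1/(2r)}$ is large enough for Bertrand's postulate to apply). First I would delete all vertices $v$ with $\deg_\G(v)<d/2$; since these account for fewer than half the incidences, the remaining hypergraph $\G_0$ still has a reasonable number of edges, every surviving vertex has degree in $[d/2,\Delta]\subseteq[d/2,d^{1+1/(2r)}]$, and $N(\G_0)\leq N$.

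The core of the argument is an iteration: starting from $\G_0$, I would sample each vertex independently with probability $p=1/H$ (so that a fixed $r$-edge survives with probability $p^r = H^{-r} \geq (2d^{1/(2r)})^{-r} = 2^{-r}d^{-1/2}$), keeping the induced sub-hypergraph on the sampled vertices. A vertex that survives has its degree distributed as $\mathrm{Bin}(\deg(v), p^{r-1})$, so in expectation the degree ratio $\Delta/\delta$ shrinks geometrically (the high-degree vertices concentrate around a larger mean, but the spread in the $\log$ scale contracts by a constant factor each round, exactly as in the $\lambda$-almost-regular reduction); meanwhile the number of edges only drops by a controlled factor each round. After $O(\log r \cdot r)$ rounds — tracked carefully so that the surviving average degree stays at least, say, $r^{3r}$ divided by the accumulated loss, which is why the hypothesis carries the large factor $r^{3r}(\log N)^2$ and the buffer $(\log N)^2$ absorbs the Chernoff failure probabilities via a union bound over $\leq N$ vertices — we reach an $r$-uniform multi-hypergraph $\G^*$ whose maximum degree is at most (a constant or at most $2r-2$) times... rather, whose average degree $d^*$ satisfies $\Delta(\G^*)\leq d^* + \lambda$ for an absolute constant $\lambda$, or more robustly $\Delta(\G^*)\leq \lambda' d^*$ for an absolute constant $\lambda'$. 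At that point the hypergraph analogue of the Alon--Friedland--Kalai technique (as used in \cite{JST22+}, via the Combinatorial Nullstellensatz / a theorem on the permanent, applied to the incidence structure) yields an $s$-regular sub-multi-hypergraph with $s=\Omega(d^*)$, and since $d^*$ will be arranged to be at least $r$, Hall-type arguments (or simply the monotonicity in the algebraic statement) give an $r$-regular sub-multi-hypergraph, as desired.

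The main obstacle I expect is the bookkeeping in the iteration: one must show simultaneously that (i) the average degree does not decay too fast — each round multiplies the edge count by roughly $p^r \cdot (\text{vertex survival correction})$, and one needs the product over all $\sim r\log r$ rounds to leave $d^{**}\geq r$, which forces the quantitative shape $d\geq r^{3r}(\log N)^2$ and is precisely why the exponent is $3r$ rather than, say, $2r$; (ii) the concentration bounds hold for \emph{every} surviving vertex simultaneously, which needs the degrees to stay $\gg \log N$ throughout so that Chernoff plus a union bound over $\leq N$ vertices works in each of the $\mathrm{poly}(r)\cdot\mathrm{polylog}(N)$ rounds — here the extra $(\log N)^2$ factor is the slack; and (iii) the degree-ratio really does contract, which requires that when we sample, a high-degree vertex's degree concentrates tightly enough relative to the new average. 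A subtlety is that the condition $\Delta\leq d^{1+1/(2r)}$ (equivalently $\log_d \Delta \leq 1 + 1/(2r)$) is exactly what makes the initial ``logarithmic spread'' small enough — $\log(\Delta/(d/2)) = O(\frac{\log d}{r})$ — so that $O(r)$ halving rounds suffice to bring the spread down to $O(1)$; getting this initial estimate and the geometric contraction to fit together cleanly is the delicate point.

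Finally, I would note that Lemma \ref{lem:small jump implies k-regular} follows immediately from Lemma \ref{lem:regular hypergraph} by the standard correspondence: given the bipartite graph $G$ with parts $A,B$ where every $v\in B$ has degree exactly $r$, form the $r$-uniform multi-hypergraph $\G$ on vertex set $A$ whose edges are the neighbourhoods $N_G(v)$ for $v\in B$ (with multiplicity); then $d(\G)=e(G)/|A|=d$, $\Delta(\G)=\Delta(G)\leq d^{1+1/(2r)}$, and $N(\G)=|A|\leq n$, so $\log N(\G)\leq \log n$ and the hypotheses transfer. An $r$-regular sub-multi-hypergraph of $\G$ corresponds to a set of vertices in $B$ each of degree $r$ together with a set of vertices in $A$ each covered exactly $r$ times — that is, precisely an $r$-regular subgraph of $G$.
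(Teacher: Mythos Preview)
Your proposal has a fundamental gap at the endgame. You plan to iterate random vertex sampling until the hypergraph becomes almost-regular, and then invoke ``the hypergraph analogue of the Alon--Friedland--Kalai technique'' to extract an $r$-regular piece. But no such analogue is known: the paper explicitly records this as an open problem (Conjecture~\ref{conj:hypergraph}), and the hypergraph work in \cite{JST22+,Kim16} that you cite does not proceed via an AFK-type statement. Without this finishing tool, reducing to an almost-regular hypergraph is a dead end. There is also a problem earlier in the iteration: when you sample vertices uniformly with probability $p$, a surviving vertex $v$ has expected degree $\deg(v)\cdot p^{r-1}$, so the expected degree of \emph{every} vertex scales by the same factor; the ratio $\Delta/\delta$ does not contract, contrary to your claim. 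The contraction in the paper's graph-regularisation (Section~\ref{sec:nearreg}) comes from treating high- and low-degree vertices differently, which your uniform sampling does not do.

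The paper's proof is entirely different and sidesteps both issues. Set $\mu=d(\G)^{1/(2r)}$ and $t=\lceil N/(2r^2\mu)\rceil$; a greedy count (Lemma~\ref{lemma:num_matching}) gives at least $(e(\G)/2t)^t$ matchings of size $t$, and pigeonhole puts many of them on a common $tr$-vertex set $U$. A direct calculation (this is where $\Delta\leq d^{1+1/(2r)}$ and $d\geq r^{3r}(\log N)^2$ are used) shows that their number exceeds the sunflower threshold $(\alpha r\log(tr))^t$ of Lemma~\ref{lemma:sunflower}, so there is an $r$-sunflower among these perfect matchings of $U$; removing the core yields an $r$-regular sub-multi-hypergraph directly, with no near-regularisation and no AFK step. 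Your final paragraph, deducing Lemma~\ref{lem:small jump implies k-regular} via the bipartite/hypergraph correspondence, is correct and matches the paper.
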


Let us first see why Lemma \ref{lem:regular hypergraph} implies Lemma \ref{lem:small jump implies k-regular}. Given a graph $G$ satisfying the conditions of Lemma \ref{lem:small jump implies k-regular}, let $\G$ be the $r$-uniform multi-hypergraph on vertex set $A$ whose hyperedges are $N_{G}(v)$ for all $v\in B$. Note that $\G$ has at most $n$ vertices, it has average degree $r\cdot e(\G)/|A|=r|B|/|A|=e(G)/|A|=d$ and maximum degree at most $\Delta$. Hence, by Lemma~\ref{lem:regular hypergraph}, $\G$ contains an $r$-regular sub(multi)hypergraph $\mathcal{H}$. The vertices of $\mathcal{H}$, together with the vertices $v\in B$ for which $N_G(v)$ is an edge in $\mathcal{H}$, induce an $r$-regular subgraph in $G$.

Hence, it will suffice to prove Lemma \ref{lem:regular hypergraph}.
In the proof of this lemma, we will use the recent breakthrough on the celebrated Sunflower conjecture \cite{ER60}. Given a positive integer $r$, a collection of $r$ sets is an \emph{$r$-sunflower} if there exists a (possibly empty) set $X$ (called the \emph{core}) such that the intersection of any two distinct elements in the collection is  $X$. It was proved by Alweiss, Lovett, Wu, and Zhang \cite{ALWZ20} that every $t$-uniform family of size at least $(\log t)^t\cdot (r\log\log t)^{O(t)}$ contains an $r$-sunflower, which was subsequently improved by Frankston, Kahn, Narayanan, and Park \cite{FKNP21}, Rao~\cite{Rao21} and Bell, Chueluecha and Warnke \cite{bell2021note}. We use the following version, proved by Rao \cite{Rao21}.

\begin{lemma}\label{lemma:sunflower}
	There is a constant $\alpha$ such that any family with  more than $(\alpha r\log (tr))^t$ sets of size $t$ contains an $r$-sunflower. 
\end{lemma}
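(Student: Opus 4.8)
The plan is to obtain the lemma from the recent breakthrough on the sunflower conjecture, organising the argument around the notion of \emph{spread} families in the style of Alweiss--Lovett--Wu--Zhang \cite{ALWZ20} (see also Frankston--Kahn--Narayanan--Park \cite{FKNP21}) and its sharpening by Rao \cite{Rao21}. Say a family $\mathcal{F}$ of $w$-sets is \emph{$\kappa$-spread} if $|\mathcal{F}[S]|\leq \kappa^{-|S|}|\mathcal{F}|$ for every non-empty set $S$, where $\mathcal{F}[S]:=\{F\setminus S:F\in\mathcal{F},\,S\subseteq F\}$ is the link of $S$; note that $\mathcal{F}[S]$ is $(w-|S|)$-uniform and that $F\mapsto F\setminus S$ is a bijection from $\{F\in\mathcal{F}:S\subseteq F\}$ onto $\mathcal{F}[S]$. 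The whole statement reduces to the following, which is its technical heart: \textbf{(Spread implies matching)} there is an absolute constant $C$ such that every non-empty $w$-uniform family with $w\geq 1$ that is $\kappa$-spread with $\kappa\geq C\log(rw)$ contains $r$ pairwise disjoint members (an $r$-sunflower with empty core).

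Granting this, I would set $\kappa:=C\log(tr)$ and $\alpha:=\max(1,C)$, so that the claimed threshold equals $c^t$ with $c:=\alpha r\log(tr)$, and note that $c\geq\kappa$ and, for $t\geq 2$, also $c\geq r$. The lemma then follows by induction on the uniformity $t$: the cases $t\leq 1$ are immediate (for $t=1$, more than $c\geq r$ distinct singletons include $r$ pairwise disjoint ones). For $t\geq 2$, let $\mathcal{F}$ be $t$-uniform with $|\mathcal{F}|>c^t$. If $\mathcal{F}$ is $\kappa$-spread then, since here $w=t$ and so $\kappa=C\log(tr)\geq C\log(rw)$, the Spread-implies-matching statement produces $r$ pairwise disjoint members and we are done. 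Otherwise there is a non-empty $S$ with $|\mathcal{F}[S]|>\kappa^{-|S|}|\mathcal{F}|>\kappa^{-|S|}c^t\geq c^{t-|S|}$, the last step using $c\geq\kappa$. In particular $|\mathcal{F}[S]|\geq 2$, so $S$ cannot be a $t$-set (a $t$-uniform family has at most one member containing a given $t$-set), hence $1\leq t-|S|<t$. By the induction hypothesis $\mathcal{F}[S]$ contains an $r$-sunflower $\{G_1,\dots,G_r\}$; its members have the form $F\setminus S$ and hence are disjoint from $S$, so its core $X$ is disjoint from $S$ too. Then $G_1\cup S,\dots,G_r\cup S$ are distinct members of $\mathcal{F}$ forming an $r$-sunflower with core $X\cup S$, completing the induction.

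It remains to sketch Spread implies matching, which is where essentially all of the difficulty lies and which I would import more or less verbatim from \cite{ALWZ20,Rao21}. The naive attempt --- build the $r$ disjoint sets greedily, at each stage discarding the at most $|W|\kappa^{-1}|\mathcal{F}|$ members meeting the union $W$ of the sets chosen so far --- works only when $\kappa>|W|$, i.e.\ when $\kappa\gtrsim rw$, which is far too weak. The decisive idea is to replace this one-shot union bound by an \emph{iterative refinement}: one cannot directly find a member of $\mathcal{F}$ avoiding a forbidden set $W$ of size $\sim rw$, but by exposing a random sparse restriction of the ground set one can pass to a subfamily whose members meet $W$ in a much smaller set while still being sufficiently spread, and then iterate; since the forbidden region shrinks geometrically, only $O(\log(rw))$ rounds are needed, which is exactly why a spread of $C\log(rw)$ suffices. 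Rao's ``coding for sunflowers'' argument packages this as a compression bound: if a $w$-uniform family were $\kappa$-spread yet contained no $r$ pairwise disjoint members, then each member could be described, relative to a random auxiliary structure, using strictly fewer than $\log|\mathcal{F}|$ bits in expectation (the encoding following a random greedy matching whose description length is controlled by spreadness), a contradiction. I expect that carrying out this compression/refinement argument with the correct constants is the main obstacle; the reduction above, together with the routine checks $c\geq\kappa$ and $c\geq r$, is straightforward.
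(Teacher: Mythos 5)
The paper gives no proof of this lemma; it is cited directly from Rao~\cite{Rao21}. Your proposal instead reconstructs the now-standard reduction (due to Alweiss--Lovett--Wu--Zhang and sharpened by Rao) from the sunflower threshold to a ``spread implies disjoint members'' statement, together with the induction on uniformity, and then defers to the literature for the spread statement; the link-passing step and the bookkeeping with $c$ and $\kappa$ are sound, and this is indeed how the result is obtained.

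However, your ``spread implies matching'' lemma is mis-stated: you claim that every $w$-uniform family that is $\kappa$-spread with $\kappa\geq C\log(rw)$ contains $r$ pairwise disjoint members, and this is false. Taking all $w$-subsets of an $N$-element ground set gives a family that is $(N/w)$-spread yet contains only $\lfloor N/w\rfloor$ pairwise disjoint members, so one needs $\kappa\geq r$, not merely $\kappa\gtrsim\log(rw)$. More globally, feeding your version into your induction would yield a sunflower threshold of order $(C\log(rw))^t$, which for fixed $t$ grows only polylogarithmically in $r$ and thus contradicts the classical $(r-1)^t$ lower bound (all transversals of $t$ disjoint blocks of size $r-1$). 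The statement Rao actually proves requires $\kappa\geq Cr\log w$; the linear factor of $r$ is essential, and it is exactly what produces the $r$-dependence in the sunflower bound. Happily this is the only defect: setting $\kappa:=Cr\log(tr)$ rather than $C\log(tr)$ and taking $\alpha:=\max(1,C)$, one still has $c=\alpha r\log(tr)\geq\kappa$ as well as $c\geq r$, and every subsequent line of your induction goes through unchanged.
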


The $r$-regular subhypergraph in Lemma \ref{lem:regular hypergraph} will come from an $r$-sunflower in a collection of large matchings in $\G$.  To bound the number of matchings from below, we will use the following simple lemma.

\begin{lemma}\label{lemma:num_matching}
	Let $\G$ be an $r$-uniform multi-hypergraph on $N$ vertices with $\Delta(\G)\leq \mu d(\G)$. Let $t\leq \frac{N}{2r^2\mu}+1$ be a positive integer. Then $\G$ contains at least $(\frac{e(\G)}{2t})^t$ matchings of size $t$. 
\end{lemma}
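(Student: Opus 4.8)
The plan is to build a matching of size $t$ greedily, one edge at a time, and lower-bound the number of choices available at each step. Suppose we have already selected a partial matching $e_1,\dots,e_i$ with $i<t$; these edges cover exactly $ir$ vertices. I want to count the edges of $\G$ that are disjoint from all of $e_1,\dots,e_i$, i.e. the edges avoiding this set $S$ of $ir$ vertices. An edge fails to be disjoint from $S$ only if it contains at least one vertex of $S$, and the number of such "bad" edges is at most $\sum_{v\in S}d_\G(v)\leq ir\cdot\Delta(\G)\leq ir\cdot\mu d(\G)$. Since $d(\G)=re(\G)/N$, this is at most $ir^2\mu e(\G)/N$. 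Using $i\leq t-1\leq \frac{N}{2r^2\mu}$, the number of bad edges is at most $\frac{N}{2r^2\mu}\cdot r^2\mu e(\G)/N=\tfrac12 e(\G)$, so at least $e(\G)-\tfrac12 e(\G)=\tfrac12 e(\G)$ edges remain available to extend the matching (note $e(\G)$ here counts edges with multiplicity, which is fine since we are building a sub-multi-hypergraph).

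Now I turn this into a count of labelled sequences $(e_1,\dots,e_t)$ of pairwise disjoint edges. At step $i+1$ (for $0\leq i\leq t-1$) there are at least $\tfrac12 e(\G)-i$ choices; to keep the bound clean I note that for $i\leq t-1$ we have $\tfrac12 e(\G)-i\geq \tfrac12 e(\G)-t\geq \tfrac{e(\G)}{2}\cdot\frac{1}{?}$ — more carefully, I would just use $\tfrac12 e(\G)-i \ge \frac{e(\G)}{2t}\cdot(t-i)\cdot\frac{2t}{e(\G)}$ type bookkeeping; the cleanest route is: the number of ordered $t$-tuples of pairwise disjoint edges is at least $\prod_{i=0}^{t-1}\big(\tfrac12 e(\G)-i\big)$, and since each unordered matching of size $t$ is counted at most $t!$ times, the number of matchings of size $t$ is at least $\frac{1}{t!}\prod_{i=0}^{t-1}\big(\tfrac12 e(\G)-i\big)\geq \binom{e(\G)/2}{t}\geq \big(\tfrac{e(\G)}{2t}\big)^t$, using the standard inequality $\binom{m}{t}\geq (m/t)^t$ with $m=e(\G)/2$. (If $e(\G)/2$ is not an integer one passes to $\lfloor e(\G)/2\rfloor$ at negligible cost, or simply observes $\tfrac12 e(\G)-i\geq \tfrac12 e(\G)-t+1 > 0$ and that $\prod_{i=0}^{t-1}(\tfrac12 e(\G)-i)\geq (e(\G)/(2t))^t\cdot t!$ by pairing terms.)

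The only mild subtlety — and the step I would be most careful about — is the inequality $\frac{1}{t!}\prod_{i=0}^{t-1}(\tfrac12 e(\G)-i)\geq (\tfrac{e(\G)}{2t})^t$, i.e. that $\binom{e(\G)/2}{t}\geq (e(\G)/(2t))^t$. This follows because $\frac{(e(\G)/2)(e(\G)/2-1)\cdots(e(\G)/2-t+1)}{t(t-1)\cdots 1} = \prod_{j=0}^{t-1}\frac{e(\G)/2-j}{t-j}$ and each factor satisfies $\frac{e(\G)/2-j}{t-j}\geq \frac{e(\G)/2}{t}$ (cross-multiplying, this is $t(e(\G)/2-j)\geq (t-j)(e(\G)/2)$, i.e. $-tj\geq -j(e(\G)/2)$, i.e. $e(\G)/2\geq t$, which holds because $t\leq \frac{N}{2r^2\mu}+1$ and, as $e(\G)=Nd(\G)/r\geq N\cdot r^{3r-1}(\log N)^2/(2r^2\mu)$ is far larger, $e(\G)/2\ge t$ is comfortably satisfied; if it somehow failed the statement would be vacuous as there would be no matching of size $t$ to speak of and the bound $(\tfrac{e(\G)}{2t})^t$ could not exceed what is trivially available — but in the regime where the lemma is applied this never arises). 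Assembling these pieces gives the claimed bound of at least $\big(\tfrac{e(\G)}{2t}\big)^t$ matchings of size $t$ in $\G$.
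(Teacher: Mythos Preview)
Your proof is correct and follows essentially the same greedy argument as the paper: build an ordered $t$-tuple of pairwise disjoint edges with at least $e(\G)/2$ choices at each step, then divide by $t!$. The one unnecessary detour is the extra ``$-i$'' you subtract at step $i$: the previously chosen edges $e_1,\dots,e_i$ each meet the covered set $S$, so they are already among the discarded edges and you genuinely have at least $e(\G)/2$ choices; dropping the $-i$ lets you finish with the trivial inequality $t!\leq t^t$ instead of $\binom{e(\G)/2}{t}\geq (e(\G)/(2t))^t$, and in particular avoids having to appeal to the application-specific bound on $d(\G)$ to guarantee $e(\G)/2\geq t$.
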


\begin{proof}
	
	We generate a matching of size $t$ with the following greedy procedure. Let $\G_1=\G$. In step $i$ ($1\leq i\leq t$), if $\G_i$ is already defined and $\G_i$ is nonempty, select an arbitrary hyperedge $e_i$ of $\G_i$, and let $\G_{i+1}$ be the hypergraph we get after removing $e_i$ and all hyperedges intersecting $e_i$ from $\G_{i}$. Clearly, $\{e_1,\dots,e_{i}\}$ is a matching. Note that every hyperedge of $\G$ intersects at most $r\Delta(\G)$ hyperedges, so $$e(\G_i)\geq e(\G)-(i-1)r\Delta(\G)\geq e(\G)-(t-1)r\Delta(\G)\geq e(\G)-\frac{N}{2r^2\mu}r\Delta(\G)\geq e(\G)-\frac{N}{2r^2\mu}r\mu d(\G)=e(\G)/2.$$
	
	Hence, at step $i$, there are at least $\frac{1}{2}e(\G)$ choices for the hyperedge $e_i$, so in total there are at least $(\frac{1}{2}e(\G))^t$ sequences $e_1,\dots,e_t$ such that $\{e_1,\dots,e_{t}\}$ is a matching. But, then, there are at least $\frac{1}{t!}(\frac{1}{2}e(\G))^t\geq (\frac{e(\G)}{2t})^{t}$ matchings of size $t$ in $\G$, as each matching corresponds to $t!$ such sequences.
\end{proof}

\begin{proof}[ of Lemma \ref{lem:regular hypergraph}]
	Let $\mu=d(\G)^{\frac{1}{2r}}$ and let $t=\left\lceil \frac{N}{2r^2\mu} \right\rceil$. By Lemma~\ref{lemma:num_matching}, $\G$ contains at least $M=(\frac{e(\G)}{2t})^t$ matchings of size $t$. Each such matching covers exactly $tr$ vertices, so by the pigeonhole principle there exists a set $U\subset V(\G)$ of size $tr$ such that at least 
	$M/\binom{N}{tr}$
	matchings of size $t$ span $U$. Using $\binom{a}{b}\leq (ea/b)^{b}$, $t\geq \frac{N}{2r^2\mu}$, $\mu=d(\G)^{\frac{1}{2r}}$ and $d(\G)\geq  r^{3r}(\log N)^2$, we have
	\begin{align*}
		M\Big/\binom{N}{tr}
		&\geq M\left(\frac{tr}{eN}\right)^{tr}=\left(\frac{e(\G)}{2t} \left(\frac{tr}{eN}\right)^r\right)^t\geq \left(\frac{r^2\mu e(\G)}{N} \left(\frac{1}{2er\mu}\right)^r\right)^t=\left(r\mu d(\G) \left(\frac{1}{2er\mu}\right)^r\right)^t \\
		&= \left(rd(\G)^{1+\frac{1}{2r}} \left(\frac{1}{2erd(\G)^{\frac{1}{2r}}}\right)^r\right)^t\geq \left(\frac{d(\G)^{1/2}}{(2er)^r}\right)^t >(2\alpha r \log N)^t\geq(\alpha r\log(tr))^{t},
	\end{align*}
    where $\alpha$ is a constant with the property in Lemma \ref{lemma:sunflower}.
	Let $\mathcal{F}$ be the family of sets whose ground set consists of all the edges of $\G$ in $U$ (note that multi-edges give rise to multiple elements in the ground set) and whose sets are the matchings spanning $U$. By the above equality, we have $|\mathcal{F}|> (\alpha r\log(tr))^{t}$, so Lemma~\ref{lemma:sunflower} guarantees that $\mathcal{F}$ contains an $r$-sunflower $S$. This is a collection of $r$
	matchings spanning $U$, such that every edge either belongs to all the matchings or only to one of them. Remove the edges contained in the core of $S$, and let the resulting set be $S'$. Then the edges in $S'$ form an $r$-regular sub(multi)hypergraph of $\G$, finishing the proof.
\end{proof}

We are now ready to prove Theorem \ref{thm:general ES}.

\begin{proof}[ of Theorem \ref{thm:general ES}]
    We may assume that $r$ is sufficiently large (e.g.\ because the case where $r$ is small follows from Theorem \ref{thm:JSES}). 
    Suppose then that $C$ is a sufficiently large absolute constant, that $n\geq 3$, and that $G$ is an $n$-vertex graph with average degree at least $Cr^2\log\log n$. Note that $G$ has a bipartite subgraph $G'$ with average degree at least $\frac{C}{2}r^2\log \log n$ and $G'$ has a subgraph $G''$ with minimum degree at least $\frac{C}{4}r^2\log \log n$. Let $A$ and $B$ be the parts of $G''$ such that $|A|\leq |B|$. Let $H$ be a spanning subgraph of $G''$ such that $d_H(v)=\frac{C}{4}r^2\log \log n$ for every $v\in B$.
    
    Let $t_0=100r\log \log n$ and let $\ell$ be the smallest non-negative integer such that $t_0(1+\frac{1}{3r})^{\ell}\geq \log n$. Note that $(1+\frac{1}{3r})^{6r\log \log n}\geq \exp(\log \log n)\geq \frac{\log n}{t_0}$, so $\ell\leq 6r\log \log n$. For each $1\leq i\leq \ell$, let $t_i=t_0(1+\frac{1}{3r})^{i}$. By definition, $t_\ell\geq \log n$.
	
	Let $A_0=\{u\in A: d_H(u)\leq 2^{t_0}\}$ and for each $1\leq i\leq \ell$, let $A_i=\{u\in A: 2^{t_{i-1}}<d_H(u)\leq 2^{t_i}\}$. Clearly, these sets partition~$A$. Hence, for every $v\in B$, either $v$ has at least $d_H(v)/2= \frac{C}{8}r^2\log \log n$ neighbours (in the graph $H$) in $A_0$, or $\ell>0$ and there is some $1\leq i\leq \ell$ such that $v$ has at least $d_H(v)/(2\ell)\geq \frac{C}{48}r\geq r$ neighbours in $A_i$ (where the last inequality holds as $C$ is sufficiently large).
	
	Therefore, at least one of the following two cases must occur.
	
	\noindent \textbf{Case 1.} There are at least $|B|/2$ vertices $v\in B$ which have at least $\frac{C}{8}r^2\log \log n$ neighbours in $A_0$.
	
	\noindent \textbf{Case 2.} There exist some $1\leq i\leq \ell$ and at least $|B|/(2\ell)$ vertices $v\in B$ which have at least $r$ neighbours in $A_i$.
	
	\sloppy In Case 1, let $B'\subset B$ be a set of size at least $|B|/2$ such that every $v\in B'$ has at least $\frac{C}{8}r^2\log \log n$ neighbours in $A_0$. For technical reasons, let us take a random subset $A_0'\subset A_0$ of size $|A_0|/3$. With positive probability, there is a set $B''\subset B'$ of at least $2|B'|/3$ vertices which all have at least $\frac{C}{100}r^2\log \log n$ neighbours in $A_0'$. Let $H'$ be a spanning subgraph of $H[A_0'\cup B'']$ obtained by keeping precisely $\frac{C}{100}r^2\log \log n$ edges from each vertex in $B''$. Now note that $|A_0'|=|A_0|/3\leq |A|/3\leq |B|/3\leq 2|B'|/3\leq |B''|$. Moreover, $d_{H'}(u)\leq d_H(u)\leq 2^{t_0}$ for every $u\in A_0'$. This implies that $H'$ is $(L,s)$-almost biregular for $L=2^{t_0}=2^{100r\log \log n}$ and $s=\frac{C}{100}r^2\log \log n$. Note that $L\geq s$. Hence, by Lemma~\ref{lem:almost bireg to reg}, $H'$ has an $r$-regular subgraph since $C$ is sufficiently large.
	
	In Case 2, let us choose some $1\leq i\leq \ell$ and a set $B'\subset B$ of size at least $|B|/(2\ell)$ such that, for every $v\in B'$, $v$ has at least $r$ neighbours in $A_i$. Let $H'$ be a subgraph of $H[A_i\cup B']$ obtained by keeping precisely $r$ edges incident to each $v\in B'$. We will apply Lemma \ref{lem:small jump implies k-regular} to this graph. Note that $\ell\geq 1$ implies that $r\leq \log n$. Let $\Delta=2^{t_i}$ and let $\delta=\Delta^{1/(1+\frac{1}{2r})}$. Clearly $\delta\geq \Delta^{1/2}\geq 2^{t_0/2}= 2^{50r\log \log n}>r^{3r}(\log n)^2$, where the last inequality follows from $r\leq \log n$. Note that for any $u\in A_i$, we have $d_{H'}(u)\leq d_{H}(u)\leq 2^{t_i}= \Delta$. Using that $|B|\cdot \frac{C}{4}r^2\log \log n= e(H)\geq |A_i|\cdot 2^{t_{i-1}}$ and that $\ell\leq 6r\log\log n$, we get
	\begin{align*}
	    e(H')
	    &=|B'|r\geq |B|r/(2\ell)=\frac{e(H)}{\frac{C}{2}\ell r \log \log n}\geq \frac{e(H)}{3Cr^2(\log \log n)^2}\geq \frac{|A_i|2^{t_{i-1}}}{3Cr^2(\log \log n)^2}.
	\end{align*}
	Note that 
 $$\delta=\Delta^{1/(1+\frac{1}{2r})}=2^{t_i/(1+\frac{1}{2r})}=2^{t_{i-1}(1+\frac{1}{3r})/(1+\frac{1}{2r})}\leq 2^{t_{i-1}-\frac{t_{i-1}}{10r}}\leq \frac{2^{t_{i-1}}}{3Cr^2(\log \log n)^2},$$
 where the last inequality holds since $\frac{t_{i-1}}{10r}\geq \frac{t_0}{10r}=10\log \log n$, $r\leq \log n$ and $r$ is sufficiently large.
	Hence, $e(H')\geq \delta|A_i|$.
	
	Thus, we can apply Lemma \ref{lem:small jump implies k-regular} to the graph $H'$ (with $d=e(H')/|A_i|\geq \delta$) and get an $r$-regular subgraph.
\end{proof}

\section{Proof of Theorem \ref{thm:reg}} \label{sec:almost-regular}

In this section we first prove Proposition \ref{prop:almostreg sharp} using a method of Pyber~\cite{pyber1985regular} developed similarly as was done in \cite{ARS+17,bucic2020nearly} for the slightly weaker Lemma~\ref{lem:almostreg}. We then deduce Theorem \ref{thm:reg} from it, using our result on regular subgraphs in almost-regular graphs (Theorem~\ref{thm:regular in almost regular}).
As is well-known, and as we have used in the deduction of Theorem~\ref{thm:regular in almost regular}, every graph with average degree $d$ has a bipartite subgraph with minimum degree at least $d/4$. Hence, Proposition \ref{prop:almostreg sharp} follows from the following result.

\begin{lemma}
    Let $G$ be a bipartite graph on at most $n$ vertices with minimum degree at least $d\geq 1$. Then $G$ has a $4$-almost-regular subgraph with average degree at least $\frac{d}{25\log(8n/d)}$.
\end{lemma}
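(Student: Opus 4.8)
The plan is to follow Pyber's dyadic-decomposition strategy for bounding the number of edges in a graph with no $r$-regular subgraph, but run it ``in reverse'' to extract an almost-regular subgraph rather than to bound edges. Fix a bipartite graph $G$ on at most $n$ vertices with $\delta(G) \ge d \ge 1$, and suppose for contradiction that $G$ has no $4$-almost-regular subgraph with average degree at least $\frac{d}{25\log(8n/d)}$. Set $L = \log(8n/d)$; the goal is to derive a contradiction by producing either such a subgraph or, ultimately, an empty graph with positive edge count.

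\textbf{Step 1: iterated peeling into a dyadic profile.} Starting from $G_0 = G$, I would repeatedly pass to a subgraph: given $G_i$ with minimum degree $d_i$ (initially $d_0 = d$), look at the vertices of ``low'' degree, say those with degree in $[d_i, 2d_i)$, versus the rest. If the low-degree vertices and their incident edges already form a subgraph that is $4$-almost-regular with average degree $\ge d_i/2$ (or so), we are done — that contradicts our assumption once $d_i$ is still large enough. Otherwise, the edges incident to low-degree vertices are few relative to $e(G_i)$, so deleting all low-degree vertices leaves a subgraph $G_{i+1}$ which still has a constant fraction of the edges and minimum degree at least $2d_i$ (in fact $\geq 2 d_i$ since we removed exactly the degree-$<2d_i$ vertices, though we must recheck after deletion — standard: iterate deleting vertices whose current degree drops below the threshold). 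The key bookkeeping is that each step at least doubles the minimum-degree threshold while losing at most a controlled fraction of edges, so after at most $\log(2n/d)$-ish steps the minimum degree would exceed $n$, which is impossible unless the process terminated earlier by finding the desired subgraph.

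\textbf{Step 2: tracking edge counts to force termination.} Quantitatively, I would maintain the invariant that $e(G_i) \geq e(G_0) \cdot \prod_{j<i}(1 - c_j)$ where the $c_j$ are the fractions lost, and show $\sum_j c_j$ is bounded (a harmonic-type sum over the roughly $\log(8n/d)$ scales, each scale contributing $O(1/\log(8n/d))$ if we calibrate the ``few edges'' threshold to be $e(G_i)/(C L)$). Thus $e(G_i)$ stays a constant fraction of $e(G) \ge nd/2 \cdot (\text{something})$ — more precisely $e(G_0) \geq |V(G_0)| d / 2$ is not quite available since only one side has the degree bound, but using $e(G) \geq \delta(G)\cdot(\text{smaller side}) \geq d \cdot |A|$ and bipartiteness gives a usable lower bound. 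Since the minimum degree doubles each step, after $k \approx \log(8n/d)$ steps either the graph is empty (contradiction with $e(G_k)>0$) or at some intermediate step the low-degree part was a good almost-regular subgraph. The threshold $\frac{d}{25\log(8n/d)}$ comes out because the ``good'' subgraph we find has average degree $\approx d_i/2 \geq d/2$ at the first scale, but to survive all scales we must allow the surviving edge count — hence the average degree of the extracted piece — to be the initial quantity divided by the number of scales times a constant; balancing the constants $25$ and $8$ is the routine part.

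\textbf{Main obstacle.} The delicate point is the simultaneous control of three quantities through the iteration: the minimum degree (must grow geometrically), the edge count (must not decay too fast), and the vertex count (bounded by $n$ throughout, which is what caps the number of iterations at $\log(8n/d)$ rather than $\log n$ — this is precisely the improvement over Lemma~\ref{lem:almostreg}). In particular, getting the denominator to be $\log(8n/d)$ and not $\log n$ requires observing that once the minimum degree reaches $d_i$, the graph has at most $2n/d_i \cdot(\text{?})$ — more carefully, at most $n$ vertices each of degree $\geq d_i$ forces $e(G_i) \leq n \Delta(G_i)$, but the cleaner bound is that the number of doubling steps before $d_i$ would exceed the number of vertices on the smaller side is only $\log(8n/d)$. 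I would also need the standard cleanup that deleting low-degree vertices and then re-deleting any newly-low-degree vertices terminates with a subgraph of min degree $\geq 2d_i$ having lost only the originally-estimated fraction of edges, which follows because each deleted vertex removes fewer than $2d_i$ edges and the total number of deletions is bounded by the count of originally-low-degree vertices plus a geometric tail. None of these are conceptually hard, but the constant-chasing to land exactly on $\frac{d}{25\log(8n/d)}$ and $4$-almost-regularity is where care is required.
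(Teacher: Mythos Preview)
Your degree-peeling scheme is not what the paper does, and the dichotomy in your Step~1 is false as stated. The paper follows Pyber's \emph{matching-extraction} method: with parts $A,B$ ($|A|\ge|B|$), one repeatedly takes a minimal nonempty $A_{i+1}\subset A_i$ with $|N_{G_i}(A_{i+1})|\le|A_{i+1}|$, sets $B_{i+1}=N_{G_i}(A_{i+1})$, peels off a perfect matching $M_{i+1}$ between them by Hall's theorem, and recurses in $G_{i+1}=G_i[A_{i+1},B_{i+1}]-M_{i+1}$. This gives nested $A_1\supset A_2\supset\cdots$ with $|A_i|=|B_i|$ and $d_{G_i}(u)\ge d-i$ for $u\in A_i$, hence $|A_t|\ge d/2$ at $t=\lfloor d/2\rfloor$. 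Since $|A_1|\le n$ and $|A_t|\ge d/2$, pigeonhole over $\approx\log(2n/d)$ dyadic scales yields some $j$ with $|A_{j+s}|\ge|A_j|/2$ for $s=\lfloor t/\log(8n/d)\rfloor$. The union $M_j\cup\cdots\cup M_{j+s}$ on $A_j\cup B_j$ has maximum degree $\le s+1$ and average degree $\ge(s+1)/2$, and one min-degree cleanup gives a $4$-almost-regular subgraph with minimum degree $\ge(s+1)/4\ge d/(25\log(8n/d))$. The constant $4$ is automatic here precisely because a union of $s+1$ matchings has maximum degree $s+1$.

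In your scheme, by contrast, the two branches of Step~1 do not cover all cases. Take $G=K_{d,m}$ with $m>4d$: the low-degree set $L_0$ is the side of size $m$, so ``low-degree vertices plus their incident edges'' is all of $K_{d,m}$, which is not $4$-almost-regular; yet the edges incident to $L_0$ are \emph{all} the edges, not ``few'', and deleting $L_0$ leaves an edgeless graph. Your process halts with nothing extracted, even though $K_{d,d}\subset G$ is $d$-regular. More generally, whichever way you set the threshold the same tension arises. If you extract only when $G_i[L_i]$ has average degree $\Omega(d_i)$ (which is what a min-degree cleanup needs in order to reach ratio $\le 4$ against the max degree $<2d_i$), then in the complementary case the $L_i$--$H_i$ edges, all of which are lost upon deleting $L_i$, can be a constant fraction of $e(G_i)$, so you cannot survive $\Theta(L)$ rounds. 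If instead you lower the threshold so that each deletion loses only $O(e(G_i)/L)$ edges, the extracted piece has average degree only $\Omega(d_i/L)$ against max degree $<2d_i$ and is merely $O(L)$-almost-regular, not $4$-almost-regular. The layered-matching structure is exactly what lets the paper get a constant ratio for free; a degree-based peeling does not.
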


\begin{proof}
    Let $A$ and $B$ be the parts of an arbitrary bipartition of $G$. Without loss of generality, assume that $|A|\geq |B|$. Let $A_1\subset A$ be a minimal non-empty subset with $|N_G(A_1)|\leq |A_1|$. This is well-defined since $|N_G(A)|\leq |B|\leq |A|$. Let $B_1=N_G(A_1)$. By the minimality of $A_1$ and as $d\geq 1$, we have $|B_1|=|A_1|$. Again by the minimality of $A_1$, for each non-empty $S\subsetneq A_1$, we have $|N_G(S)|>|S|$, so by Hall's theorem, there is a perfect matching $M_1$ between $A_1$ and $B_1$ in $G$. Let $G_0=G$ and $G_1=G[A_1,B_1]-M_1$.

    We now iteratively find sequences of vertex sets $A_1\supset A_2\supset \ldots \supset A_d$ and $B_1\supset B_2\supset \ldots \supset B_d$ along with disjoint matchings $M_1,M_2,\ldots, M_d$ such that, for each $i\in [d]$, $|A_i|=|B_i|$, $M_i$ is a perfect matching in the graph $G[A_i,B_i]$, and, setting $G_i=G[A_i,B_i]-M_1-\ldots -M_{i}$, $d_{G_i}(u)\geq d-i$ for each $u\in A_i$. Suppose then that  $1\leq i<d$ is an integer and we have found $A_i\subset A$ and $B_i\subset B$ with $|A_i|=|B_i|$ and a bipartite graph $G_i$ between them such that $d_{G_i}(u)\geq d-i$ for each $u\in A_i$. We define $A_{i+1}$, $B_{i+1}$ and $G_{i+1}$ as follows. Let $A_{i+1}\subset A_i$ be a minimal non-empty subset so that $|N_{G_i}(A_{i+1})|\leq |A_{i+1}|$. This is well-defined since $|N_{G_i}(A_i)|\leq |B_i|=|A_i|$. Let $B_{i+1}=N_{G_i}(A_{i+1})$. By the minimality of $A_{i+1}$ and using that $d_{G_i}(u)\geq d-i>0$ for every $u\in A_i$, we have $|B_{i+1}|=|A_{i+1}|$. Again by the minimality of $A_{i+1}$, for each non-empty $S\subsetneq A_i$, we have $|N_{G_i}(S)|>|S|$, so by Hall's theorem, there is a perfect matching $M_{i+1}$ between $A_{i+1}$ and $B_{i+1}$ in $G_i$. Let $G_{i+1}=G_i[A_{i+1},B_{i+1}]-M_{i+1}$. Note that, as $B_{i+1}=N_{G_i}(A_{i+1})$, each $u\in A_{i+1}$ has degree at least $d-i-1$ in $G_{i+1}$, so that we can continue the iteration until $i=d$.
    
    Let $t=\lfloor d/2 \rfloor$. Then each $u\in A_t$ has degree at least $d-t\geq d/2$ in $G_t$, and hence in particular $|B_t|\geq d/2$. Therefore, $|A_t|\geq d/2$. Let $k=\lfloor \log(2n/d)\rfloor+1=\lfloor \log(8n/d)\rfloor-1$ and let $s=\lfloor t/\log(8n/d)\rfloor$. Note that $(k+1)s\leq t$, so $1+ks\leq t$ (else, if $s=0$, the statement of our lemma is trivial). Since $|A_1|\leq n$, $|A_{1+ks}|\geq |A_t|\geq d/2$ and $2^k\geq 2n/d$, there exists some $j\geq 1$ with $|A_{j+s}|\geq |A_j|/2$. Let $H$ be the graph whose vertex set is $A_j\cup B_j$ and whose edge set is the union of the matchings $M_j, M_{j+1},\dots, M_{j+s}$. Each of these matchings has size at least $|A_{j+s}|\geq |A_j|/2$, so $e(H)\geq (s+1)|A_j|/2$. Thus, $H$ has average degree at least $(s+1)/2$. Then $H$ contains a non-empty subgraph $H'$ with minimum degree at least $(s+1)/4$. On the other hand, clearly $H$ has maximum degree at most $s+1$ and therefore so does $H'$. It follows that $H'$ is $4$-almost-regular with minimum degree at least $(s+1)/4\geq \frac{t}{4\log(8n/d)}\geq \frac{d}{25\log(8n/d)}$.
\end{proof}

\begin{proof}[ of Theorem \ref{thm:reg}]
By Theorem \ref{thm:regular in almost regular}, there exists some $C_0\geq 1$ such that every $4$-almost-regular graph with average degree at least $C_0r$ contains an $r$-regular subgraph. Let $G$ be an $n$-vertex graph with average degree $d\geq 100C_0r\log(n/r)$. By Proposition \ref{prop:almostreg sharp}, $G$ has a $4$-almost-regular subgraph $H$ with average degree at least $\frac{d}{100\log(32n/d)}$.
Note that
$$\frac{d}{100\log(32n/d)}\geq \frac{100C_0r\log(n/r)}{100\log(32n/d)}\geq \frac{100C_0r\log(n/r)}{100\log(n/r)}=C_0r,$$
where the second inequality used that $d\geq 32r$.
Hence, $H$ contains an $r$-regular subgraph by the choice of $C_0$.
\end{proof}

\section{Graphs without dense regular subgraphs} \label{sec:lower bounds}

In this section we prove our lower bounds, namely Propositions \ref{prop:constructiondsmall} and \ref{prop:lower bound d large}.

For Proposition \ref{prop:constructiondsmall}, it suffices to prove the following result.

\begin{prop} \label{prop:constructiondsmall adjusted}
    There exists some $c>0$ such that the following holds. For any positive integers $r$ and $n$ with $c^{-1}\leq r\leq c\log n$, there exists a bipartite graph with at least $n$ but at most $2n$ vertices and with average degree at least $\frac{1}{400}r^2\log(\frac{\log n}{r})$, which does not contain an $r$-regular subgraph.
\end{prop}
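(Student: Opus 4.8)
The plan is to adapt the Pyber--Rödl--Szemerédi construction \cite{PRSz95}, building the graph in $\Theta(\log\log n / r)$ ``layers'', where each layer contributes average degree roughly $r^2$, so that the total average degree is of order $r^2\log(\tfrac{\log n}{r})$. The basic building block is a bipartite graph $G_0$ with parts $X_0, Y_0$ such that every vertex of $Y_0$ has degree exactly $r-1$ (or some fixed function of $r$), the $Y_0$-side is much larger than the $X_0$-side, and crucially the neighbourhoods of the $Y_0$-vertices are chosen so that no small set of them can combine into something $r$-regular: concretely, one wants that any $r$-regular subgraph must use many vertices on the $X_0$ side, which forces it to be ``spread out''. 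Then one iterates: attach to (a blow-up of) the $X_0$-side another such gadget, and so on for $\ell \approx \log(\tfrac{\log n}{r})$ layers, with the sizes growing at each level (roughly like an iterated exponential tower truncated at height $\ell$, which is where the $\log\log n$ eventually appears and caps the number of layers once we require at most $2n$ vertices). Each vertex ends up in at most a constant number of layers, and in each layer it sees degree $O(r^2)$, giving total average degree $\Theta(r^2\ell)$.

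The key combinatorial point is the $r$-regularity obstruction, and this is where the factor $r^2$ rather than $r$ comes from. In the original PRSz construction the gadgets are designed so that an $r$-regular (or just $3$-regular) subgraph is impossible because any such subgraph would have to ``propagate'' through the layers, and at the bottom layer there simply isn't room. To get the $r$-dependence right one takes the degree parameter inside each gadget to be about $r$, so that a vertex participating in an $r$-regular subgraph must have its $\leq r$ edges distributed in a constrained way, and then a counting argument (comparing the number of edges an $r$-regular subgraph would need on the small side against the number available) shows the subgraph cannot exist unless it is empty. The precise form: one sets up each bipartite gadget so that if $H$ is an $r$-regular subgraph then $e(H)$ restricted to that layer is both $\geq r|X \cap V(H)|$-ish and $\leq (\text{something} < r)|X \cap V(H)|$, a contradiction — essentially the same ``small jump'' tension exploited (in the opposite direction) in Lemma \ref{lem:small jump implies k-regular}. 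The hierarchical structure ensures this argument can be applied layer by layer from the top down.

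The main obstacle I expect is getting all the quantitative bookkeeping to fit simultaneously: the number of vertices must be between $n$ and $2n$, the number of layers must be $\Theta(\log(\tfrac{\log n}{r}))$ (which forces the size growth between consecutive layers to be roughly a fixed exponential, hence after $\ell$ layers a tower of height $\ell$, and matching that to $n$ pins down $\ell \approx \log\log n - \log r$), and in each layer the average degree contribution must be a genuine $\Omega(r^2)$ rather than $O(r^2/\ell)$ or similar — so the edge counts have to be carefully balanced against the part sizes at every level. A secondary technical point is ensuring the construction works in the stated range $c^{-1} \le r \le c\log n$; the upper restriction $r \le c\log n$ is what guarantees there is at least one layer (i.e.\ $\log(\tfrac{\log n}{r}) \geq $ a positive constant), and one should check the degenerate case where $\ell$ is a small constant separately. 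Once the construction and its parameters are set, verifying ``no $r$-regular subgraph'' should reduce to the layer-by-layer counting argument sketched above, and computing the average degree is then a routine sum $\sum_{i=1}^{\ell} \Theta(r^2) = \Theta(r^2 \ell)$.
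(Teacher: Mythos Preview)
Your high-level intuition---a layered Pyber--R\"odl--Szemer\'edi style construction---is right, but both the parameter breakdown and the mechanism for excluding $r$-regular subgraphs are off, and differ from the paper's proof in essential ways.

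\textbf{Layer count and degree.} You propose $\ell\approx\log(\tfrac{\log n}{r})$ layers, each contributing average degree $\Theta(r^2)$. But you also say each $Y_0$-vertex has degree $r-1$; since $|Y_0|\gg|X_0|$, a single gadget then contributes only $O(r)$ to the overall average degree, not $r^2$, so your edge count does not reach $r^2\log(\tfrac{\log n}{r})$. The paper instead takes one large part $A$ of size $n$ and $\Theta(r\log(\tfrac{\log n}{r}))$ small parts $B_i$ with $|B_i|=n/2^{2^{20i/r}}$, joining every $A$-vertex to exactly $\lfloor r/8\rfloor$ \emph{random} vertices of each $B_i$. This is not an iterated tower (which would give only $\log^* n$ layers, inconsistent with your claimed $\ell$) but a double exponential with $i$ appearing linearly in the top exponent.

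\textbf{Ruling out $r$-regular subgraphs.} Here is the real gap. The paper's argument is probabilistic, not a deterministic layer-by-layer propagation. For each potential size $m$ of a regular subgraph $H$, let $j$ be the index with $|B_j|\approx m$. A union bound shows that with high probability no $m$-set inside $A\cup\bigcup_{i<j}B_i$ spans $mr/8$ edges (this genuinely uses the randomness: the bipartite graphs to the large $B_i$'s are sparse). Meanwhile $\bigcup_{i>j+1}B_i$ has fewer than $m/10$ vertices, hence at most $mr/10$ edges of $H$; and the two transitional layers $B_j,B_{j+1}$ contribute at most $r/8$ per $A$-vertex each. Summing gives $e(H)<mr/2$, a contradiction. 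Your sketched argument (``$e(H)$ restricted to a layer is $\geq r|X\cap V(H)|$ yet $<r|X\cap V(H)|$'') does not handle a subgraph that draws edges from many layers simultaneously; and if a single layer already carried degree $\Theta(r^2)$ per vertex, that layer alone could contain an $r$-regular subgraph unless the gadget is designed with great care, which you have not specified. The paper's device of giving each $A$-vertex only $\lfloor r/8\rfloor<r$ edges into any one $B_i$ is exactly what forces an $r$-regular subgraph to spread across layers and makes the size-based trichotomy above go through.
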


To see that Proposition \ref{prop:constructiondsmall adjusted} indeed implies Proposition \ref{prop:constructiondsmall}, note that for $r<c^{-1}$ Proposition \ref{prop:constructiondsmall} follows from Theorem~\ref{thm:PRSorigconstruction}, whereas for $c\log n<r\leq \frac{1}{2}\log n$, we can apply Proposition \ref{prop:constructiondsmall adjusted} with $r=\lfloor c\log n \rfloor$ and use that an $r$-regular bipartite graph has an $s$-regular subgraph for every $s\leq r$ (and in each case, we select an $n$-vertex induced subgraph uniformly at random).

\begin{proof}[ of Proposition \ref{prop:constructiondsmall adjusted}]
    Let $c$ be a sufficiently small positive constant. Let $i_{\textrm{min}}=\frac{1}{20}r\log r+r$ and let $i_{\textrm{max}}=\frac{1}{20}r\log \log n-r$. Let $A$ be a set of size $n$ and for each $i_{\textrm{min}}\leq i\leq i_{\textrm{max}}$, let $B_i$ be a set of size $n/2^{2^{20i/r}}$ such that all these sets are pairwise disjoint. Consider a random bipartite graph $G$ with parts $A$ and $\bigcup_{i=i_{\textrm{min}}}^{i_{\textrm{max}}} B_i$, where each vertex in $A$ is joined to precisely $\lfloor r/8 \rfloor$ random vertices of $B_i$, for each $i_{\textrm{min}}\leq i\leq i_{\textrm{max}}$.

    Note that for each $i_{\textrm{min}}\leq i\leq i_{\textrm{max}}-1$, we have \begin{equation}        |B_i|/|B_{i+1}|=2^{2^{20(i+1)/r}}/2^{2^{20i/r}}=2^{2^{20(i+1)/r}-2^{20i/r}}=2^{2^{20i/r}(2^{20/r}-1)}\geq 2^{2^{20i/r}\cdot \frac{10}{r}}\geq 2^{2^{\log r}\cdot \frac{10}{r}}= 2^{10}. \label{eqn:ratio}
    \end{equation}
    In particular, $|B_{i+1}|\leq |B_i|/2$, so $\sum_{i=i_{\textrm{min}}}^{i_{\textrm{max}}} |B_i|\leq n$. Hence, $G$ has at most $2n$ vertices. The number of edges in $G$ is precisely $n\cdot (i_{\textrm{max}}-i_{\textrm{min}}+1)\cdot \lfloor r/8 \rfloor\geq \frac{1}{200}nr^2(\log \log n-\log r-100)=\frac{1}{200}nr^2(\log(\frac{\log n}{r})-100)$. Since $r\leq c\log n$ and $c$ is sufficiently small, we have $100\leq \frac{1}{2}\cdot \log (\frac{\log n}{r})$, therefore $e(G)\geq \frac{1}{400}nr^2\log(\frac{\log n}{r})$.
    It follows that the average degree of $G$ is at least $\frac{1}{400}r^2\log(\frac{\log n}{r})$.

    It remains to prove that with positive probability $G$ does not contain an $r$-regular subgraph. This will follow shortly from the following claim.

    \medskip

    \noindent \emph{Claim.}
        Let $m$ be a positive integer no greater than $2n$. Let $j$ be the largest integer in the interval $[i_{\textrm{min}},i_{\textrm{max}}]$ such that $|B_j|\geq m$ (if no such $j$ exists, then let $j=i_{\textrm{min}}$). Then the probability that there exists a set $S\subset A\cup \bigcup_{i=i_{\textrm{min}}}^{j-1} B_i$ of size at most $m$ such that $G[S]$ contains at least $mr/8$ edges is at most $2^{-m}$.

    \medskip

    \noindent \emph{Proof of Claim.}
        If $j=i_{\textrm{min}}$, then there is nothing to prove, so let us assume that $j\geq i_{\textrm{min}}+1$. We will use the union bound. The number of ways to choose $S$ is at most $\sum_{\ell=1}^m \binom{2n}{\ell}\leq \sum_{\ell=1}^m \binom{4n}{\ell}\leq m\binom{4n}{m}\leq m(4en/m)^m\leq (8en/m)^m$. The number of ways to choose which $\lceil mr/8\rceil$ edges should be present in $G$ is at most $\binom{m^2}{\lceil mr/8\rceil}\leq (em^2/\lceil mr/8\rceil)^{\lceil mr/8\rceil}\leq (8em/r)^{\lceil mr/8\rceil}$. Moreover, the probability that these $\lceil mr/8\rceil$ edges are present in $G$ is at most $(\lfloor r/8\rfloor/|B_{j-1}|)^{\lceil mr/8\rceil}\leq (r/|B_{j-1}|)^{\lceil mr/8\rceil}$. Indeed, the individual probability that such an edge is present is at most $\lfloor r/8\rfloor/|B_{j-1}|$, and conditional on some edges being present, this probability can only decrease. Hence, the probability that there exists a set $S\subset A\cup \bigcup_{i=i_{\textrm{min}}}^{j-1} B_i$ of size at most $m$ such that $G[S]$ contains at least $mr/8$ edges is at most
        \begin{align*}
            (8en/m)^m\cdot (8em/r)^{\lceil mr/8\rceil}\cdot (r/|B_{j-1}|)^{\lceil mr/8\rceil}
            &=(8en/m)^m\cdot (8em/|B_{j-1}|)^{\lceil mr/8\rceil}\leq (8en/|B_j|)^m\cdot (8e|B_j|/|B_{j-1}|)^{\lceil mr/8\rceil} \\
            &\leq (8en/|B_j|)^m\cdot (8e|B_j|/|B_{j-1}|)^{mr/8}=\left( \frac{8en}{|B_j|}\cdot \left(\frac{8e|B_j|}{|B_{j-1}|}\right)^{r/8}\right)^m.
        \end{align*}
        By (\ref{eqn:ratio}), we have $|B_j|/|B_{j-1}|\leq 2^{-2^{20(j-1)/r}\cdot \frac{10}{r}}$, so
        $$\frac{n}{|B_j|}\cdot \left(\frac{|B_j|}{|B_{j-1}|}\right)^{r/8}\leq 2^{2^{20j/r}}\cdot 2^{-2^{20(j-1)/r}\cdot \frac{5}{4}}\leq 2^{2^{20j/r}}\cdot 2^{-2^{20j/r}\cdot \frac{6}{5}}=2^{-\frac{1}{5}\cdot 2^{20j/r}}\leq 2^{-\frac{1}{5}\cdot 2^{\log r+20}}\leq 2^{-10r}.$$ It follows that $\frac{8en}{|B_j|}\cdot (\frac{8e|B_j|}{|B_{j-1}|})^{r/8}\leq 1/2$, which completes the proof of the claim. \hfill $\boxdot$

    \medskip
    
    We will now show that if for all $1\leq m\leq 2n$ and every set $S\subset A\cup \bigcup_{i=i_{\textrm{min}}}^{j-1} B_i$ of size at most $m$, $G[S]$ has fewer than $mr/8$ edges (with $j$ depending on $m$ as in the claim), then $G$ does not contain an $r$-regular subgraph. (This is sufficient to complete the proof of the proposition, using the claim and the union bound.) 

    Indeed, for the sake of contradiction, assume that $G$ contains an $r$-regular subgraph $H$. Let $m=|H|$. Note that $e(H)= mr/2$. Let $j$ be defined as in the claim, and let $S=V(H)\cap(A\cup \bigcup_{i=i_{\textrm{min}}}^{j-1} B_i)$. By the assumption on $G$, $H[S]$ has fewer than $mr/8$ edges. 
    Furthermore, by the construction of $G$, each vertex in $A$ sends at most $r/8$ edges to $B_j$ and to $B_{\min(j+1,i_{\textrm{max}})}$. Hence, the number of edges in $H$ incident to $\bigcup_{i=i_{\textrm{min}}}^{\min(j+1,i_{\textrm{max}})} B_i$ is less than $mr/8+2\cdot m\cdot r/8=3mr/8$. This leads to a contradiction if $j\ge i_{\textrm{max}} - 1$.
    Otherwise, it follows that $H$ has at least $e(H)/4$ edges incident to $\bigcup_{i=j+2}^{i_{\textrm{max}}} B_i$. On the other hand, by the definition of $j$, we have $|B_{j+1}|<m$ and hence, using (\ref{eqn:ratio}), $|\bigcup_{i=j+2}^{i_{\textrm{max}}} B_i|\leq 2|B_{j+2}|\leq m/10$. 
    Therefore, as $H$ is $r$-regular, the number of edges in $H$ incident to $\bigcup_{i=j+2}^{i_{\textrm{max}}} B_i$ is at most $\frac{m}{10}\cdot r< e(H)/4$. This is a contradiction, and completes the proof.
\end{proof}

Most cases of Proposition \ref{prop:lower bound d large} follow from the following result.

\begin{prop}
 \label{prop:lower bound d large adjusted}
    There is some $c>0$ such that for all positive integers $r,n\geq 2$ with $20\log n\leq r\leq n/100$, there exists an $n$-vertex graph with average degree at least $cr\log(n/r)$ which does not have an $r$-regular subgraph.
\end{prop}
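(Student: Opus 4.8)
The plan is to build a random layered bipartite graph in the spirit of the construction behind Proposition~\ref{prop:constructiondsmall adjusted}, but with layer sizes decaying geometrically rather than doubly exponentially: in this larger range of $r$ one wants only about $\log(n/r)$ layers, each contributing roughly $r$ to the degrees. Concretely, set $\ell=\left\lfloor\frac{1}{20}(\log(n/r)-1)\right\rfloor$, take a set $A$ of size $n$ and pairwise disjoint sets $B_0,\dots,B_\ell$, disjoint from $A$, with $|B_i|=\lceil n\,2^{-20i-1}\rceil$, and let $G$ be the random bipartite graph with parts $A$ and $\bigcup_{i=0}^\ell B_i$ in which each $u\in A$ is joined to a uniformly random $\lfloor r/8\rfloor$-subset of $B_i$, independently over all $u$ and all $i$. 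One checks (the ceilings causing no difficulty) that $|B_\ell|\ge r$, that $\sum_i|B_i|\le n$ so $G$ has at most $2n$ vertices, and that $e(G)=n(\ell+1)\lfloor r/8\rfloor$; since $\ell+1=\Theta(\log(n/r))$, the average degree of $G$ is $\Omega(r\log(n/r))$. A uniformly random induced subgraph on exactly $n$ vertices retains in expectation a constant fraction of the edges and is still free of $r$-regular subgraphs, so it suffices to show that, with positive probability, $G$ itself has no $r$-regular subgraph.

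The core of the argument is the following claim, proved by the same union bound as in Proposition~\ref{prop:constructiondsmall adjusted}. For every integer $1\le m\le 2n$, letting $j$ be the largest index with $|B_j|\ge m$ (and $j=0$ if there is no such index), with probability at least $1-2^{-m}$ there is no $S\subseteq A\cup\bigcup_{i\le j-1}B_i$ with $|S|\le m$ and $e(G[S])\ge mr/8$. If $j=0$ this is vacuous; otherwise the number of choices of $S$ is at most $(8en/m)^m$, the number of ways to prescribe which $\lceil mr/8\rceil$ of the possible edges are present is at most $(8em/r)^{\lceil mr/8\rceil}$, and the probability that a fixed such family of edges is entirely present is at most $\big((r/8)/|B_{j-1}|\big)^{\lceil mr/8\rceil}$ — here one uses that each vertex of $A$ picks a uniformly random $\lfloor r/8\rfloor$-subset of $B_{j-1}$, that $|B_i|\ge|B_{j-1}|$ for all $i\le j-1$, and that conditioning on further edges being present only decreases these conditional probabilities. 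Multiplying the three bounds, and using $|B_{j-1}|\ge 2^{19}|B_j|\ge 2^{19}m$ together with $r\ge 20\log n$, the product is at most $\big((8en/m)\,2^{-2r}\big)^m\le 2^{-m}$, as required.

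Taking a union bound over $m\in\{1,\dots,2n\}$, with probability at least $1-\sum_{m\ge1}2^{-m}>0$ the conclusion of the claim holds for all $m$ at once, and it remains to show that any such $G$ has no $r$-regular subgraph. Suppose $H\subseteq G$ is $r$-regular with $|H|=m$. Since $G$ is bipartite with parts $A$ and $\bigcup_iB_i$, so is $H$, and being $r$-regular its two parts have equal size $m/2$, whence $e(H)=mr/2$ and $m\le 2n$; take $j$ as in the claim. Each edge of $H$ has one endpoint in $A$ and the other in a unique $B_i$, and I count these edges according to which $B_i$ contains the second endpoint. Those with second endpoint in $\bigcup_{i\le j-1}B_i$ lie inside $S:=V(H)\cap\big(A\cup\bigcup_{i\le j-1}B_i\big)$, which has size at most $m$, so there are fewer than $mr/8$ of them by the claim (immediate when $j=0$, since then $S\subseteq A$). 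Those with second endpoint in $B_j$, and those with second endpoint in $B_{j+1}$, number at most $|V(H)\cap A|\cdot\lfloor r/8\rfloor\le mr/16$ each, because every vertex of $A$ has at most $\lfloor r/8\rfloor$ neighbours in any single $B_i$. Finally, since $|B_{j+1}|<m$ and consecutive layers shrink by a factor at least $2^{19}$, one has $\big|\bigcup_{i\ge j+2}B_i\big|\le 2|B_{j+2}|\le m/2^{17}$, so at most $mr/2^{17}$ edges of $H$ have second endpoint in $\bigcup_{i\ge j+2}B_i$. Summing, $e(H)<mr/8+mr/16+mr/16+mr/2^{17}<mr/2=e(H)$, a contradiction.

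The only genuinely technical point is the verification of the numerics in the claim uniformly over the range $20\log n\le r\le n/100$; this turns out to be easier than in Proposition~\ref{prop:constructiondsmall adjusted}, since these hypotheses force $n$ to be large and the bound $r\ge 20\log n$ provides an enormous amount of slack, which is also the reason for taking the layer-size ratio to be a large constant rather than, say, $2$. Finally, the remaining range $\frac12\log n\le r<20\log n$ of Proposition~\ref{prop:lower bound d large} can be treated by the same construction — the verification in the claim still goes through down to $r\ge\frac12\log n$ once the layer-size ratio is enlarged and $n$ is large enough — with the few remaining small cases handled separately.
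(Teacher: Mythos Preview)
Your argument is correct. The approach, however, differs from the paper's own proof of this proposition: you adapt the construction and analysis of Proposition~\ref{prop:constructiondsmall adjusted} (fixed out-degree from $A$ into geometrically shrinking layers, with a union bound over prescribed edge configurations and negative correlation), replacing the doubly-exponential layer ratio by a large constant ratio $2^{20}$. The paper instead uses a different random model for this range: layers $B_i$ of \emph{increasing} size $2^i r$, with each edge between $A$ and $B_i$ present independently with probability $p_i=\tfrac{1}{100\cdot 2^i}$, and the key claim is proved via a Chernoff bound rather than by enumerating edge configurations. The paper's final contradiction step is also shorter---it only splits the layers into ``small'' (total size $<m/2$) and ``large'' (covered by the claim)---whereas you need four pieces ($B_{\le j-1}$, $B_j$, $B_{j+1}$, $B_{\ge j+2}$) because your edge model gives exact degree control from $A$ into each layer rather than a density condition. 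Both routes are standard and of comparable difficulty; yours has the virtue of unifying the two ranges under a single construction scheme, while the paper's independent-edge model makes the probabilistic estimate a one-line Chernoff application. Your closing remark that the same construction handles $\tfrac12\log n\le r<20\log n$ after enlarging the layer ratio is plausible but not fully verified; the paper instead derives that range from Proposition~\ref{prop:constructiondsmall} directly.
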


\begin{proof}
    Let $\ell=\frac{1}{2}\log(n/r)$, and let $A$, $B_1$, $B_2$, \dots, $B_{\ell}$ be disjoint sets with $|A|=n/2$ and $|B_i|=2^i r$ for all $i\in [\ell]$. We will define a random bipartite graph $G$ with parts $A$ and $\bigcup_{i=1}^{\ell} B_i$ which with positive probability will satisfy the following properties simultaneously: (i) $G$ has at least $\frac{1}{800} nr\log(n/r)$ edges and (ii) $G$ has no $r$-regular subgraph.
    Since $|\bigcup_{i=1}^{\ell} B_i|\leq 2^{\ell+1}r\leq 2(n/r)^{1/2}r\leq n/2$, the graph $G$ has at most $n$ vertices, so (after adding isolated vertices to $G$ if necessary) we obtain a suitable $n$-vertex graph.

    For each $i\in [\ell]$, let each edge between $A$ and $B_i$ be present independently at random with probability $p_i=\frac{1}{100\cdot 2^i}$.
    Now, note that
    \begin{equation*}
        \mathbb{E}[e(G)]=\sum_{i=1}^{\ell} \sum_{u\in A} \sum_{v\in B_i} p_i=\sum_{i=1}^{\ell} \sum_{u\in A} r/100=\frac{1}{200} \ell n r=\frac{1}{400} nr\log(n/r)
    \end{equation*}
    and
    \begin{equation*}
        \textrm{Var}(e(G))=\sum_{i=1}^{\ell} \sum_{u\in A} \sum_{v\in B_i} p_i(1-p_i)\leq \sum_{i=1}^{\ell} \sum_{u\in A} \sum_{v\in B_i} p_i=\mathbb{E}[e(G)],
    \end{equation*}
    so, by Chebysev's inequality, $\mathbb{P}\left(e(G)<\frac{1}{800}nr\log(n/r)\right)=o(1)$.

    It remains to prove that $G$ does not contain an $r$-regular subgraph with probability $1-o(1)$. This will be shown using the following claim.

    \noindent \emph{Claim.} Let $m$ be a positive integer. Let $j$ be the largest integer in $[\ell]$ such that $|B_j|\leq m/4$, and if no such integer exists, then let $j=0$. Then the probability that there exist $A'\subset A$ and $B'\subset \bigcup_{i=j+1}^{\ell} B_i$ such that $|A'|\leq m$, $|B'|\leq m$ and $e_G(A',B')> mr/2$ is at most $n^{-m}$.

    \medskip

    \noindent \emph{Proof of Claim.} The statement is trivial for $j=\ell$, so let us assume that $j<\ell$. Fix some $A'\subset A$ and $B'\subset \bigcup_{i=j+1}^{\ell} B_i$ such that $|A'|\leq m$, $|B'|\leq m$. Note that if $v\in B'$, then $v\in B_i$ for some $i\geq j+1$, so $p_i\leq \frac{1}{100\cdot 2^{j+1}}$. By the definition of $j$, we have $2^{j+1}r=|B_{j+1}|>m/4$, so $p_i\leq \frac{4r}{100m}\leq \frac{r}{4m}$. Thus, applying a Chernoff bound (Lemma~\ref{chernoff}),
    \[
    \mathbb{P}(e_G(A',B')> mr/2)\leq \mathbb{P}(\bin(m^2,r/4m)> mr/2)\leq \exp(-mr/8).
    \]
    The number of choices for the pair $(A',B')$ is at most $(\sum_{i=1}^m \binom{n}{i})^2\leq n^{2m}$, so by the union bound the probability that there exist $A'\subset A$ and $B'\subset \bigcup_{i=j+1}^{\ell} B_i$ such that $|A'|\leq m$, $|B'|\leq m$ and $e_G(A',B')> mr/2$ is at most $n^{2m}\exp(-mr/8)\leq n^{-m}$, where for the last inequality we used that $r\geq 20\log n$. This completes the proof of the claim. \hfill $\boxdot$

    \medskip
    
    We will now show that if for all positive integers $m$ there exist no $A'\subset A$ and $B'\subset \bigcup_{i=j+1}^{\ell} B_i$ such that $|A'|\leq m$, $|B'|\leq m$ and $e_G(A',B')> mr/2$ (with $j$ depending on $m$ as in the claim), then $G$ does not contain an $r$-regular subgraph. (This is sufficient to complete the proof of the proposition, using the claim and the union bound.)

    Indeed, for the sake of contradiction, assume that $G$ does contain an $r$-regular subgraph $H$. Let $A'=V(H)\cap A$, let $m=|A'|$ and let $B'=V(H)\cap \bigcup_{i=j+1}^{\ell} B_i$. Note that, by the regularity of $H$, we have $|B'|\leq |V(H)\cap \bigcup_{i=1}^{\ell} B_i|=|V(H)\cap A|=m$. Moreover, since $H$ is $r$-regular and (by the definition of $j$) $|\bigcup_{i=1}^{j} B_i|< m/2$, we have $e_H(A',B')> e(H)-\frac{m}{2}r= mr/2$, which contradicts the assumption about $G$.
\end{proof}

To obtain the remaining cases of Proposition \ref{prop:lower bound d large} which do not follow from Proposition \ref{prop:lower bound d large adjusted} (i.e. where $\frac{1}{2}\log n\leq r< 20\log n$), we can apply Proposition \ref{prop:constructiondsmall} to obtain an $n$-vertex graph $G_0$ with average degree $\Omega(\log^2 n)$ and no $s$-regular subgraph for $s=\frac{1}{2}\log n$. Let $G$ be a bipartite subgraph of $G_0$ containing at least half of the edges of $G_0$. Then $G$ is a suitable construction since any $r$-regular subgraph of $G$ would have an $s$-regular subgraph for $s=\frac{1}{2}\log n$.

\section{Concluding remarks} \label{sec:concluding remarks}

Recall that $d(r,n)$ is defined to be the smallest $d$ such that every $n$-vertex graph with average degree at least $d$ contains an $r$-regular subgraph. In this paper we proved that
$$
\left\{\begin{array}{lr}
        d(r,n)=\Theta(r\log(n/r)), & \text{for } \frac{1}{2}\log n\leq r\leq n/2\\
        \Omega(r^2 \log(\frac{\log n}{r}))\leq d(r,n)\leq O(r^2\log\log n), & \text{for } 3\leq r<\frac{1}{2}\log n
        \end{array}\right\}
$$

It is natural to make the following conjecture.

\begin{conj} \label{conj:transition}
    There exists some $C$ such that if $r\leq \frac{1}{2}\log n$, then every $n$-vertex graph with average degree at least $Cr^2 \log(\frac{\log n}{r})$ contains an $r$-regular subgraph.
\end{conj}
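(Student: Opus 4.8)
The plan is to adapt the proof of Theorem~\ref{thm:general ES}, optimising the threshold $t_0$ used there. Note first that the conjecture is already known when $r\le(\log n)^{1-\Omega(1)}$: in that range $\log(\tfrac{\log n}{r})=\Theta(\log\log n)$, so Theorem~\ref{thm:general ES} (for large $r$) together with Theorem~\ref{thm:JSES} (for small $r$) gives the bound $O(r^2\log(\tfrac{\log n}{r}))$, matching Proposition~\ref{prop:constructiondsmall}. Thus the real content is the window $r=(\log n)^{1-o(1)}$, where $\log(\tfrac{\log n}{r})=o(\log\log n)$ while $\log r=(1-o(1))\log\log n$ is much larger.

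Running the argument of Theorem~\ref{thm:general ES} with a general starting scale $t_0$ and target $B$-side degree $s$, one reaches the usual dichotomy after $\ell\approx 3r\log(\tfrac{\log n}{t_0})$ doubling-scales: either (\textbf{Case 1}) many vertices of $B$ have $\Omega(s)$ neighbours of degree at most $2^{t_0}$, or (\textbf{Case 2}) at some scale $i$ many vertices of $B$ have at least $r$ neighbours of degree in $(2^{t_{i-1}},2^{t_i}]$. In Case~1 the resulting subgraph is $(2^{t_0},\Theta(s))$-almost-biregular, so Lemma~\ref{lem:almost bireg to reg} yields an $r$-regular subgraph once $s\gtrsim rt_0$; in Case~2, Lemma~\ref{lem:small jump implies k-regular} yields one once the degree at the chosen scale is at least $r^{3r}(\log n)^2$, which, since that scale may be $i=1$, forces $2^{t_0}\gtrsim r^{3r}(\log n)^2$, i.e.\ $t_0\gtrsim r\log r+\log\log n$ — unless $t_0\ge\log n$, so that there are no Case~2 scales at all. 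One also needs $\ell\le s/(2r)$. Tracking these constraints, the smallest admissible average degree comes out of order $r\cdot\min(\log n,\,r\log r+\log\log n)$ plus lower-order terms. For $r\le(\log n)^{1/2}$ this is $O(r^2\log(\tfrac{\log n}{r}))$ and the conjecture follows by routine bookkeeping with $t_0\asymp r\log r+\log\log n$; but for $r=(\log n)^{1-o(1)}$ it is $\Theta(r^2\log r)$ or $\Theta(r\log n)$, exceeding the conjectured $r^2\log(\tfrac{\log n}{r})$ by a factor that diverges as $r\to\log n$.

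So the main obstacle is real and sits at both ends of the dichotomy. The cleanest way past it would be to strengthen the sunflower-based Lemma~\ref{lem:regular hypergraph} to require only $d(\mathcal{G})\ge(r\log N)^{O(1)}$ in place of $r^{3r}(\log N)^2$: given such a lemma one may take $t_0\asymp\max(r,\log\log n)$, whence Case~1 needs only $s\gtrsim r\max(r,\log\log n)=O(r^2\log(\tfrac{\log n}{r}))$, Case~2 is available at every scale, and $\ell\lesssim r\log(\tfrac{\log n}{r})\le s/(2r)$ — and the conjecture follows. The exponential factor $r^{3r}$ in Lemma~\ref{lem:regular hypergraph} comes essentially from the step where one pigeonholes the $\big(\tfrac{e(\mathcal{G})}{2t}\big)^t$ matchings of size $t$ onto a common vertex set $U$ of size $tr$, at a cost of $\binom{N}{tr}=2^{\Theta(tr\log(N/tr))}$; the sunflower argument genuinely needs \emph{all} the chosen matchings to span the same $U$, so that deleting the core of the $r$-sunflower leaves an $r$-regular sub-hypergraph. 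Removing this $\binom{N}{tr}$ loss — for instance via a spread or weighted variant of the sunflower lemma applied directly to the matchings, or by first passing to a smaller vertex set on which $\mathcal{G}$ remains dense and has a small jump — appears to need a genuinely new idea, and I expect this to be the crux. A second route, removing the $\log(\text{aspect ratio})$ loss in Pyber's regularisation (Lemma~\ref{lem:almost bireg to almost reg}) so that Case~1 can tolerate a much larger $t_0$, would also suffice, but a loss-free version of that lemma is itself a well-known bottleneck, so I would not attack the conjecture from that side first.
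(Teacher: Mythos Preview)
This statement is a \emph{conjecture}, not a theorem: the paper does not prove it, and the discussion immediately following Conjecture~\ref{conj:transition} explains why the method of Theorem~\ref{thm:general ES} falls short. Your proposal is likewise not a proof --- you explicitly concede that ``a genuinely new idea'' is needed --- so in that sense the gap is that there is no proof, only a diagnosis.

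That said, your diagnosis is accurate and agrees with the paper's own. Both you and the paper identify the same bottleneck: with target average degree $r^2\log(\tfrac{\log n}{r})$, the Case~1 constraint forces $t_0\lesssim r\log(\tfrac{\log n}{r})$, but Case~2 (via Lemma~\ref{lem:small jump implies k-regular}) needs $2^{t_0}\gtrsim r^{3r}(\log n)^2$, and these are incompatible when $r=(\log n)^{1-o(1)}$. Where you differ slightly is in the proposed line of attack. The paper singles out Lemma~\ref{lemma:num_matching} --- raising the matching size $t$ beyond $N/(2r^2\mu)$ --- and notes that there are almost-regular $r$-uniform hypergraphs with no matching covering much more than a $1/r$ fraction of the vertices, so that route is blocked. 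You instead locate the loss at the $\binom{N}{tr}$ pigeonhole step in the proof of Lemma~\ref{lem:regular hypergraph} and suggest bypassing it via a spread/weighted sunflower argument or by first restricting to a smaller vertex set; you also float the alternative of sharpening Lemma~\ref{lem:almost bireg to almost reg}. These are reasonable directions, but neither you nor the paper carries any of them through, and the conjecture remains open.
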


The difficulty in proving Conjecture \ref{conj:transition} using the proof method of Theorem \ref{thm:general ES} is that when $r\approx \log n$, then one can construct graphs in which typical edges have an endvertex of large enough degree so that we cannot apply the strategy of \textbf{case 1}, but where this degree is not large enough to apply the strategy of \textbf{case 2} (see Section~\ref{sec:tight ES}). More precisely, if our host graph has average degree $r^2 \log(\frac{\log n}{r})$, then in the proof of Theorem \ref{thm:general ES} we cannot choose $t_0$ to be larger than $r\log(\frac{\log n}{r})$. However, this means that in order to be able to deal with \textbf{case 2}, we would need to strengthen Lemma~\ref{lem:small jump implies k-regular} so that the required lower bound on $d$ is only about $2^{t_0}\approx (\frac{\log n}{r})^r$, which is significantly smaller than $r^{3r}(\log n)^2$ when $r$ is close to $\log n$. A natural approach to achieve this would be strengthening Lemma~\ref{lemma:num_matching} by allowing $t$ to be larger, but there are examples of almost-regular $r$-uniform hypergraphs with no matchings covering significantly more than $1/r$ proportion of the vertices, showing that $t$ cannot be taken to be larger in that lemma.

\textbf{Regularising hypergraphs.} Some of our techniques can also be used to (nearly-)regularise hypergraphs. In particular, an analogue of Theorem \ref{thm:constant diff} can be proved for linear hypergraphs by modifying the proof of Lemma~\ref{lem:one step with aux} as follows. We split the vertex set of our $k$-uniform linear hypergraph $G$ into the set $U_L$ of `low-degree' vertices and the set $U_H$ of `high-degree' vertices as before, and we let $G'$ be the subhypergraph obtained from $G$ by applying the following random edge and vertex deletions for some appropriate small $\eps>0$.

\begin{itemize}
    \item[(a)] Each $e\in E(G)$ is deleted independently with probability $1-(1-\eps)^{s_e}$, where $s_e=|e\cap U_H|$.
    \item[(b)] Each $v\in U_L$ is deleted independently with probability $\eps$.
\end{itemize}

Note that in the graph ($k=2$) case we recover the random deletions applied in the proof of Lemma \ref{lem:one step with aux}.
Now, let $e\in E(G)$. Observe that $e$ is deleted by (a) with probability $1-(1-\eps)^{s_e}$, and it is deleted by (b) with probability $1-(1-\eps)^{k-s_e}$ (since each of the $k-s_e$ vertices in $e\cap U_L$ survives with probability $1-\eps$). Hence, the probability that $e\in E(G')$ is precisely $(1-\eps)^k$. However, for any $u\in U_L$, if we condition on the event that $u\in V(G')$, then each hyperedge containing $u$ survives with a greater probability, $(1-\eps)^{k-1}$. Furthermore, if our hypergraph is linear, then, for each $u\in V(G)$, conditional on $u\in V(G')$, the events that $e\in E(G')$ are independent over all $e\in E(G)$ containing $u$. Hence, the degrees of low-degree vertices will drop slower than the degrees of high-degree vertices, and we can expect the hypergraph to become more regular.

Unfortunately, there is no known analogue of Corollary \ref{cor:AFK} for hypergraphs. Thus, we are unable to extend Theorem~\ref{thm:regular in almost regular} to this setting, and instead make the following conjecture.

\begin{conj} \label{conj:hypergraph}
    For each positive integer $k\geq 2$ and $\lambda\geq 1$, there exists some $c=c(k,\lambda)>0$ such that every $k$-uniform linear hypergraph with average degree $d$ and maximum degree at most $\lambda d$ contains an $r$-regular subhypergraph for some $r\geq cd$.
\end{conj}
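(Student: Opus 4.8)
The plan is to mirror the proof of Theorem~\ref{thm:regular in almost regular} in two stages: first near-regularise the hypergraph down to the case where the maximum degree exceeds the average degree by only an absolute constant (depending on $k$), and then invoke a hypergraph analogue of the Alon--Friedland--Kalai theorem to extract a genuinely regular subhypergraph. The second stage is the one that is currently missing, and will be the main obstacle.

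For the first stage, I would run the random deletion process sketched in the concluding remarks above. After passing to a sub(hyper)graph of large minimum degree, so that $d\leq d_G(v)\leq \lambda d$ for all $v$, split $V(G)$ into a low-degree set $U_L$ and a high-degree set $U_H$ as in Lemma~\ref{lem:one step with aux}, and for a small $\eps>0$ delete each edge $e$ independently with probability $1-(1-\eps)^{|e\cap U_H|}$ and each vertex of $U_L$ independently with probability $\eps$. The two features that make this work are exactly those flagged in the excerpt: unconditionally every edge survives with probability $(1-\eps)^k$, but conditional on a fixed low-degree vertex $u$ surviving, each edge through $u$ survives with the larger probability $(1-\eps)^{k-1}$; and, crucially, linearity means that for each fixed $u$ the survival events of the edges through $u$ are mutually independent, so the new (corrected) degree of $u$ is binomial. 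Carrying an integer correction function $f$ exactly as in Lemma~\ref{lem:one step with aux} --- increased slightly to absorb the low-degree vertices whose degree drops too far, and with a few extra edges deleted to push down the rare surviving high-degree vertices --- Chernoff bounds then give, with positive probability, a subhypergraph in which the ratio of corrected maximum to corrected minimum degree has shrunk by a factor $1-\Omega(\eps)$ while $\sum_v f(v)$ is still tiny. Iterating this with the bookkeeping of Lemma~\ref{lem:min close to max with correction} (the exponents of $\lambda$ and the constants now also depending on $k$) yields a subhypergraph $H$ with average degree $d'=\Omega_{k,\lambda}(d)$ and maximum degree at most $d'+O_k(1)$.

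The second stage is where the real difficulty lies, and is precisely the point the paper identifies as open. For graphs, Corollary~\ref{cor:AFK} finishes the job: a graph whose maximum degree exceeds its average degree by only a bounded amount contains a regular subgraph of comparable degree, via the algebraic argument of Alon, Friedland and Kalai, which counts, over a finite field, solutions of a cleverly chosen polynomial system. That argument exploits in an essential way that each edge meets exactly two vertices --- this is what turns the question into one about a well-chosen low-degree multilinear polynomial --- and it has no known counterpart for $k\geq 3$. To complete the proof one would therefore need to develop such a counterpart: for instance, to show that a $k$-uniform linear hypergraph with maximum degree at most $d'+O_k(1)$ and average degree $d'$ has a non-trivial $\{0,1\}$-solution, with suitable support, to an appropriate system over $\mathbb{F}_p$ forcing an exactly $r$-regular subhypergraph for some $r=\Omega_k(d')$. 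I expect this to be the crux. The matching-and-sunflower strategy of Lemma~\ref{lem:regular hypergraph} is not a substitute here, since it requires the average degree to be at least roughly $r^{3r}(\log N)^2$ (with $r$ the uniformity) rather than merely a large constant, and it outputs a subhypergraph regular of degree equal to the uniformity rather than of the prescribed degree. A purely probabilistic replacement --- randomly sparsifying $H$ and then deleting a few edges to correct exact $r$-regularity --- also appears to stall, because no concentration inequality is strong enough to beat the number of vertices whose degree needs correcting once $r$ is a constant fraction of $d'$. Absent a genuinely new idea of this algebraic flavour, the near-regularisation step is all that this method delivers.
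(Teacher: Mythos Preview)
The statement you are attempting to prove is a \emph{conjecture} in the paper, not a theorem: the paper explicitly says that there is no known analogue of Corollary~\ref{cor:AFK} for hypergraphs, and that this is precisely why Theorem~\ref{thm:regular in almost regular} cannot be extended to $k\geq 3$. There is therefore no proof in the paper to compare your proposal against.

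Your proposal is not a proof, and you correctly recognise this. The first stage --- iterating the random deletion process from the concluding remarks, with the correction function $f$ and the bookkeeping of Lemma~\ref{lem:min close to max with correction} --- is indeed the natural extension and should go through for linear hypergraphs essentially as you describe, since linearity is exactly what makes the edge-survival events through a fixed vertex independent. But, as you yourself say, the second stage is missing: you would need a hypergraph version of the Alon--Friedland--Kalai result, and you have not supplied one. Your assessment of why the alternatives fail (the sunflower argument of Lemma~\ref{lem:regular hypergraph} requires far too much average degree and gives the wrong regularity; random sparsification plus correction does not concentrate well enough) is accurate. In short, your write-up is a clear articulation of the obstacle the paper already flags, not a resolution of it; the conjecture remains open.
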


\textbf{Acknowledgements.} We thank the anonymous referees for their helpful comments.

\bibliographystyle{abbrv}
\bibliography{references}

\end{document}